\newtheorem{theorem}{Theorem}[section]
\newtheorem{lemma}[theorem]{Lemma}
\begin{document}
\textwidth 150mm \textheight 225mm
\title{The unicyclic graphs with the second smallest normalized Laplacian eigenvalue no less than $1-\frac{\sqrt{6}}{3}$
\thanks{ Supported by the National Natural Science
Foundation of China and the Seed Foundation of Innovation
and Creation for Graduate Students in Northwestern Polytechnical University (Z2017190).}}
\author{{Weige Xi, Ligong Wang\footnote{Corresponding author.}, Xiangxiang Liu, Xihe Li, Xiaoguo Tian}\\
{\small Department of Applied Mathematics, School of Natural and Applied Science,}\\
{\small Northwestern Polytechnical University, Xi'an, Shaanxi 710072, P.R.China}
 \\{\small E-mail: xiyanxwg@163.com, lgwangmath@163.com, xxliumath@163.com}
 \\ {\small lxhdhr@163.com, xiaoguotianwycm@163.com }\\}
\date{}
\maketitle
\begin{center}
\begin{minipage}{120mm}
\vskip 0.3cm
\begin{center}
{\small {\bf Abstract}}
\end{center}
{\small Let $\lambda_{2}(G)$ be the second smallest normalized Laplacian eigenvalue of a graph $G$. In this paper, we determine all unicyclic graphs of order $n\geq21$ with $\lambda_{2}(G)\geq 1-\frac{\sqrt{6}}{3}$. Moreover, the unicyclic graphs with
$\lambda_{2}(G)=1-\frac{\sqrt{6}}{3}$ are also determined.

\vskip 0.1in \noindent {\bf Key Words}: \ Second smallest normalized Laplacian eigenvalue, Unicyclic graph. \vskip
0.1in \noindent {\bf AMS Subject Classification (2000)}: \ 05C50, 15A18.}
\end{minipage}
\end{center}

\section{Introduction}
\label{sec:ch6-introduction}
All graphs considered in this paper are connected, undirected and simple. Let $G$ be  a graph with vertex set $V(G)$ and edge set $E(G)$.
Let $d(v)=d_{G}(v)$ be the degree of vertex $v$ in $G$. For $v\in V(G)$ ($e\in E(G)$, resp.), we use $G-v$ ($G-e$, resp.) to denote the graph obtained by deleting $v$ ($e$, resp.) from $G$. Let $I$ denote the identity matrix, ${\bf j}$ denote the vector consisting of all ones.
 We use the notation $C_n$ for the cycle of order $n$. Meanwhile we denote by $\Phi(B)=\Phi(B;\lambda)=det(\lambda I-B)$ the characteristic polynomial of the square matrix $B$.

Let $A(G)$ and $D(G)$ be the adjacency matrix and the diagonal matrix of vertex degrees of $G$, respectively. The Laplacian and normalized Laplacian matrices of $G$ are defined as $L(G)=D(G)-A(G)$ and $\mathcal{L}(G)=D^{-\frac{1}{2}}(G)L(G)D^{-\frac{1}{2}}(G)$, respectively. For $v\in V(G)$,
let $\mathcal{L}_v(G)$ be the principal submatrix of $\mathcal{L}(G)$ obtained by deleting the row and the column corresponding to the vertex $v$. When only one graph $G$ is under consideration, we sometimes use $A$, $D$, $L$, $\mathcal{L}$ and $\mathcal{L}_v$ instead of $A(G)$, $D(G)$, $L(G)$, $\mathcal{L}(G)$ and $\mathcal{L}_v(G)$ respectively. It is easy to see that $\mathcal{L}(G)$ is a symmetric positive semidefinite matrix and $D^{\frac{1}{2}}(G){\bf j}$ is an eigenvector of $\mathcal{L}(G)$ corresponding to eigenvalue 0. Thus, the eigenvalues $\lambda_{i}(G)$ of $\mathcal{L}(G)$ satisfy
$$\lambda_{n}(G)\geq\cdots\geq\lambda_{2}(G)\geq\lambda_{1}(G)=0.$$
Some of them may be repeated according to their multiplicities. $\lambda_{k}(G)$ is the $k$-th smallest normalized Laplacian eigenvalue of $G$. When only one graph is under consideration, we may use $\lambda_{k}$ instead of $\lambda_{k}(G)$, for $1\leq k \leq n$.

The normalized Laplacian matrix of a graph is pointed out by Chung \cite{Chung}. Chung showed that the normalized
Laplacian matrix has eigenvalues always lying in the range between 0 and 2 inclusive. One advantage to this is that it makes it easier to compare the distribution of
the eigenvalues for two different graphs. The eigenvalues of the normalized Laplacian matrix of a graph have a good relationship with other graph invariants for general graphs in a way that other definitions (such as the eigenvalues of adjacency matrix) fail to do. From these eigenvalues the so-called ¡°Randi\'{c} energy¡± can be calculated, which are believed to have some applications in chemistry. With regard to the results
of Randi\'{c} energ can be found in \cite{LG, LW}. The advantages of this definition on the normalized Laplacian eigenvalues
depend on the fact that it is the same with the eigenvalues in spectral geometry and in stochastic processes. We refer the
reader to \cite{Chung} for the detail.

In terms of $\lambda_{2}(G)$, Chung \cite{Chung} showed that $\lambda_{2}(G)$ is $0$ if and only if $G$ is disconnected. This result is closely related to the second smallest eigenvalue of its Laplacian matrix \cite{Butler}. Thus $\lambda_{2}(G)$ is popularly known as a good parameter to
measure how well a graph is connected. In fact, Chung \cite{Chung} also showed that $\lambda_{2}(G)$ is closely related to the discrete Cheeger¡¯s
constant, isoperimetric problems, etc.  H.H. Li et al. \cite{HHYZ} studied the behavior of $\lambda_{2}$ when the graph is perturbed by grafting
an edge. They determined the path with minimum $\lambda_{2}(G)$ among all trees of order $n$. H.H. Li et al. \cite{HH} studied the effect on the second smallest normalized Laplacian eigenvalue by grafting some pendant paths.

Restrictions on the second smallest normalized Laplacian eigenvalue of graphs force these graphs
to have a very special structure. Determining these graphs is an interesting problem. Many results about this problem
have been obtained. J.X. Li et al. \cite{Jian} studied the variation of $\lambda_{2}(G)$ when the graph is perturbed by separating an edge. They determined all trees and unicyclic graphs with $\lambda_{2}(G)\geq1-\frac{\sqrt{2}}{2}$. J.X. Li et al. \cite{Jianxi}
determine all trees with $\lambda_{2}(G)\geq1-\frac{\sqrt{6}}{3}$. They classified such trees into six classes, and the values of the second smallest normalized Laplacian eigenvalue for the six classes of the trees are provided, respectively. X.G. Tian et al \cite{Tian} determined all trees with $\lambda_{2}(G)\geq1-\frac{\sqrt{3}}{2}$.

In this paper, we further characterize all unicyclic graphs of order $n\geq21$ with $\lambda_{2}(G)\geq 1-\frac{\sqrt{6}}{3}$.
Moreover, the unicyclic graphs with
$\lambda_{2}(G)=1-\frac{\sqrt{6}}{3}$ are also determined. In Section 2, we give some known lemmas and preliminary
results. In Section $8-i$ ($3 \leq i \leq5$), we determine all trees of diameter $8-i$ with $\lambda_{2}(G)\geq 1-\frac{\sqrt{6}}{3}$.

\section{Preliminaries}

In this section, we recall some properties of the eigenvalues and eigenfunctions of the normalized Laplacian matrix $\mathcal{L}(G)$ of a graph $G$. Let $g$ be a eigenvector of $\mathcal{L}(G)$. Then we can view $g$ as a function which assigns to each vertex $v$ of $G$ a real value $g(v)$, the coordinate of $g$ according to $v$ (All the vectors in this paper are dealt in this way). By letting $g=D^{1/2}f$, we have $$\frac{g^{T}\mathcal{L}g}{g^{T}g}=\frac{f^{T}D^{1/2}\mathcal{L}D^{1/2}f}{(D^{1/2}f)^{T}D^{1/2}f}=\frac{f^{T}Lf}{f^{T}Df}=\frac{\sum_{uv\in E(G)}(f(u)-f(v))^{2}}{\sum_{v\in V(G)}d(v)(f(v))^{2}}.$$
Thus, we can obtain the following formulas for $\lambda_{2}(G)$.
\begin{equation}\label{eq:c1}\lambda_{2}(G)=\inf_{f\perp D{\bf j}}\frac{f^{T}Lf}{f^{T}Df}=\inf_{f\bot D{\bf j}}\frac{\sum_{uv\in E(G)}(f(u)-f(v))^{2}}{\sum_{v\in V(G)}d(v)(f(v))^{2}}.
\end{equation}
A nonzero vector that satisfies equality in (\ref{eq:c1}) is called a
harmonic eigenfunction associated with $\lambda_{2}(G)$.

The following inequalities are known as the Cauchy's inequalities and the whole theorem is also known as the interlacing theorem.

\noindent\begin{lemma}\label{le:1}  (\cite{Cvetk}) Let $A$ be a Hermitian matrix with eigenvalues $x_{1}\leq\cdots\leq x_{n}$ and $B$ be a principal submatrix of $A$. Let $B$ have eigenvalues $y_{1}\leq\cdots\leq y_{m}$ $(m\leq n)$. Then the inequalities
$x_{i}\leq y_{i}\leq x_{n-m+i}$ hold for $i=1,2,\cdots,m$. In particular, $x_{1}\leq y_{1}\leq x_{2}\leq y_{2}\leq\cdots\leq y_{n-1}\leq x_{n}$ for $m=n-1$.
\end{lemma}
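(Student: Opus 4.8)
The plan is to derive both inequalities from the Courant--Fischer min-max characterization of the eigenvalues of a Hermitian matrix. Recall that for a Hermitian matrix $A$ of order $n$ with eigenvalues $x_{1}\leq\cdots\leq x_{n}$ one has
$$x_{i}=\min_{\substack{S\subseteq\mathbb{C}^{n}\\ \dim S=i}}\ \max_{\substack{v\in S\\ v\neq 0}}\frac{v^{*}Av}{v^{*}v},\qquad x_{i}=\max_{\substack{S\subseteq\mathbb{C}^{n}\\ \dim S=n-i+1}}\ \min_{\substack{v\in S\\ v\neq 0}}\frac{v^{*}Av}{v^{*}v}.$$
I would first recall this standard fact (it follows quickly from the spectral decomposition of $A$ together with a dimension-counting argument: any $i$-dimensional subspace meets the span of the eigenvectors for $x_{i},\dots,x_{n}$ nontrivially).

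Next I would set up the correspondence between $B$ and a coordinate subspace. Since $B$ is a principal submatrix, there is an index set $I\subseteq\{1,\dots,n\}$ with $|I|=m$ such that $B$ is obtained from $A$ by keeping the rows and columns indexed by $I$. Put $U=\{v\in\mathbb{C}^{n}: v_{j}=0\text{ for }j\notin I\}$, so that $\dim U=m$ and the map sending $v\in U$ to its restriction $w=(v_{j})_{j\in I}\in\mathbb{C}^{m}$ is a linear isomorphism preserving the Rayleigh quotient, namely $v^{*}Av=w^{*}Bw$ and $v^{*}v=w^{*}w$. Consequently the min-max formulas for the $y_{i}$ are exactly the formulas above, but with the outer optimization restricted to subspaces $S\subseteq U$.

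For the lower bound $x_{i}\leq y_{i}$ I would use the min form. Let $S^{*}\subseteq U$ be an $i$-dimensional subspace attaining $y_{i}$. Since $S^{*}$ is also an $i$-dimensional subspace of $\mathbb{C}^{n}$, it is an admissible competitor in the unconstrained minimization defining $x_{i}$, whence $x_{i}\leq\max_{0\neq v\in S^{*}}v^{*}Av/v^{*}v=y_{i}$. For the upper bound $y_{i}\leq x_{n-m+i}$ I would apply this same inequality to $-A$ and $-B$. The eigenvalues of $-A$ in increasing order are $-x_{n}\leq\cdots\leq-x_{1}$ and those of $-B$ are $-y_{m}\leq\cdots\leq-y_{1}$, so the $j$-th smallest eigenvalues are $-x_{n-j+1}$ and $-y_{m-j+1}$; the lower bound then gives $-x_{n-j+1}\leq-y_{m-j+1}$, that is $y_{m-j+1}\leq x_{n-j+1}$, and the substitution $i=m-j+1$ yields $y_{i}\leq x_{n-m+i}$. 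The stated interlacing chain for $m=n-1$ is the special case. I expect no genuine obstacle beyond bookkeeping: the one delicate point is the index reversal in the $-A$ argument, which must be tracked carefully so as to land precisely on the subscript $n-m+i$.
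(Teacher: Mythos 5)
Your proposal is correct. Note, however, that the paper itself offers no proof of this lemma: it is Cauchy's interlacing theorem, quoted verbatim with a citation to Cvetkovi\'c--Doob--Sachs, and used as a black box (e.g.\ to read off $\lambda_2$ from the spectra of $\mathcal{L}_{v}$ in Theorems 3.1--5.1). So there is nothing internal to compare against; what you have supplied is the standard textbook argument, and it is sound. The two halves of your argument check out: the coordinate subspace $U$ does preserve Rayleigh quotients, since for $v$ supported on $I$ one has $v^{*}Av=\sum_{j,k\in I}\overline{v_{j}}A_{jk}v_{k}=w^{*}Bw$, so the Courant--Fischer optimization for $B$ is exactly the optimization for $A$ with the outer subspace constrained to lie in $U$, and restricting the feasible set of a minimization can only increase the value, giving $x_{i}\leq y_{i}$. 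The delicate point you flagged, the index reversal under negation, also lands correctly: the $j$-th smallest eigenvalue of $-A$ is $-x_{n-j+1}$, the lower bound gives $y_{m-j+1}\leq x_{n-j+1}$, and setting $j=m-i+1$ yields $y_{i}\leq x_{n-(m-i+1)+1}=x_{n-m+i}$ as required. One small point worth making explicit if you write this up: the minimum in Courant--Fischer is attained (by the span of the first $i$ eigenvectors), which justifies your choice of an optimal $S^{*}\subseteq U$; alternatively you can avoid attainment altogether by taking $S^{*}$ to be the span of eigenvectors of $B$ for $y_{1},\dots,y_{i}$ lifted into $U$, which gives $\max_{0\neq v\in S^{*}}v^{*}Av/v^{*}v=y_{i}$ directly.
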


\noindent\begin{lemma}\label{le:2}  (\cite{Jian}) Let $G$ be a connected graph and $v$ be a pendent vertex of $G$. Then $\lambda_{2}(G)\leq\lambda_{2}(G-v)$.
\end{lemma}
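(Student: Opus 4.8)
The plan is to prove the inequality directly from the variational characterization (\ref{eq:c1}), by transplanting a harmonic eigenfunction for $\lambda_2(G-v)$ into an admissible test function on $G$. Write $H=G-v$ and let $u$ be the unique neighbor of the pendent vertex $v$, so that $d_G(u)=d_H(u)+1$, $d_G(w)=d_H(w)$ for every other $w\in V(H)$, and $d_G(v)=1$. Let $h$ be a harmonic eigenfunction associated with $\lambda_2(H)$, so that $h\neq 0$, $\sum_{w\in V(H)}d_H(w)h(w)=0$, and the Rayleigh quotient of $h$ on $H$ equals $\lambda_2(H)$. The idea is to extend $h$ to $V(G)$ so that the added edge $uv$ contributes nothing to the numerator of the Rayleigh quotient while the denominator can only grow.

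First I would define the naive extension $f_0$ on $V(G)$ by $f_0(w)=h(w)$ for $w\in V(H)$ and $f_0(v)=h(u)$. Then the edge $uv$ satisfies $(f_0(u)-f_0(v))^2=0$, so the numerator $\sum_{xy\in E(G)}(f_0(x)-f_0(y))^2$ equals the numerator $\sum_{xy\in E(H)}(h(x)-h(y))^2$ of $h$ on $H$. A short piece of degree bookkeeping gives $\sum_{w\in V(G)}d_G(w)f_0(w)^2=\sum_{w\in V(H)}d_H(w)h(w)^2+2h(u)^2$, since the extra unit of degree at $u$ and the single edge at $v$ together add $2h(u)^2$. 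Thus $f_0$ already has the same numerator and a larger denominator than $h$.

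The obstacle is that $f_0$ need not be orthogonal to $D_G{\bf j}$: the same bookkeeping yields $\sum_{w\in V(G)}d_G(w)f_0(w)=\sum_{w\in V(H)}d_H(w)h(w)+2h(u)=2h(u)$, which vanishes only when $h(u)=0$. To repair this I would subtract a constant, setting $f=f_0-c\,{\bf j}$ with $c=h(u)/|E(G)|$, so that $\sum_{w\in V(G)}d_G(w)f(w)=2h(u)-c\cdot 2|E(G)|=0$ and hence $f\perp D_G{\bf j}$. Subtracting a constant leaves every difference $f(x)-f(y)$ unchanged, so the numerator is still that of $h$ on $H$. Expanding the denominator of $f$ and using $\sum_{w}d_G(w)f_0(w)=2h(u)$ and $\sum_w d_G(w)=2|E(G)|$ gives, after cancellation, $\sum_{w\in V(G)}d_G(w)f(w)^2=\sum_{w\in V(H)}d_H(w)h(w)^2+2h(u)^2\bigl(1-\tfrac{1}{|E(G)|}\bigr)\ge \sum_{w\in V(H)}d_H(w)h(w)^2$.

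Finally, $f$ is a nonzero vector orthogonal to $D_G{\bf j}$, since it cannot be a nonzero constant (a nonzero constant is not orthogonal to $D_H{\bf j}$) and cannot vanish identically (else $h$ would be constant, hence zero). Therefore (\ref{eq:c1}) applies on $G$, and since the numerator is unchanged while the denominator did not decrease we obtain $\lambda_2(G)\le \frac{\sum_{xy\in E(H)}(h(x)-h(y))^2}{\sum_{w\in V(H)}d_H(w)h(w)^2}=\lambda_2(H)=\lambda_2(G-v)$. I expect the main difficulty to be precisely the orthogonality constraint: the naive extension has the right Rayleigh quotient but lies in the wrong subspace, and one must verify that correcting it by a constant shift increases rather than decreases the denominator, which is exactly what the factor $1-\tfrac{1}{|E(G)|}\ge 0$ guarantees.
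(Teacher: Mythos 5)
Your proof is correct, and there is in fact nothing in the paper to compare it against: Lemma \ref{le:2} is stated with the citation \cite{Jian} and no proof is given here, so you have supplied a self-contained argument for a result the authors simply quote. Your argument is the natural variational one, and every computation checks out: extending the harmonic eigenfunction $h$ of $\lambda_2(G-v)$ by $f_0(v)=h(u)$ kills the contribution of the edge $uv$ to the numerator, the degree bookkeeping $\sum_{w}d_G(w)f_0(w)=2h(u)$ and $\sum_{w}d_G(w)f_0(w)^2=\sum_{w}d_H(w)h(w)^2+2h(u)^2$ is right, and your shift constant $c=h(u)/|E(G)|$ is precisely the $D_G$-weighted mean of $f_0$, so the recentered $f=f_0-c\,{\bf j}$ is orthogonal to $D_G{\bf j}$ while the denominator decreases only by $2h(u)^2/|E(G)|$, leaving it at $\sum_{w}d_H(w)h(w)^2+2h(u)^2\bigl(1-\tfrac{1}{|E(G)|}\bigr)\geq\sum_{w}d_H(w)h(w)^2>0$. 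The nondegeneracy check is also sound (if $f\equiv 0$ then $h$ is constant, and a constant orthogonal to $D_H{\bf j}$ on a connected graph with at least one edge is zero, contradicting that $h$ is an eigenfunction), though the phrase about $f$ not being a nonzero constant is redundant once orthogonality of $f$ is established. The only implicit hypothesis worth flagging is that $G-v$ must have at least two vertices for $\lambda_2(G-v)$ to be defined, which is assumed whenever the lemma is invoked in the paper.
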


Let $e=uv$ be an edge of a graph $G$. Let $G'$ be the graph obtained from $G$ by contracting the edge $e$ into a new vertex $u_{e}$ and adding a new pendent edge $u_{e}v_{e}$, where $v_{e}$ is a new pendent vertex. We call that $G'$ is obtained from $G$ by separating an edge $uv$ (see Figure \ref{Fig.1.}).

\begin{figure}[htbp]
  \centering
  \includegraphics[scale=0.6]{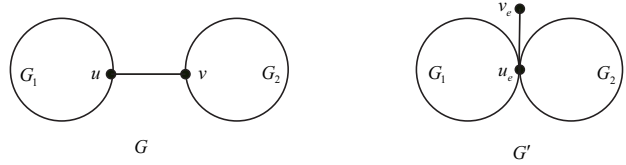}
  \caption{Separating an edge $uv$}\label{Fig.1.}
\end{figure}

\noindent\begin{lemma}\label{le:3} (\cite{Jian}) Let $e=uv$ be a cut edge of a connected graph $G$. Suppose that $G-uv=G_{1} \cup G_{2}$ $(|V(G_{1})|, |V(G_{2})|\geq2)$, where $G_{1}$ and $G_{2}$ are two components of $G-uv$, $u\in V(G_{1})$ and $v\in V(G_{2})$. Let $G'$ be the graph obtained from $G$ by separating the edge $uv$. Then $\lambda_{2}(G)\leq \lambda_{2}(G')$, and the inequality is strict if $f(v_{e})\neq0$, where $f$ is a harmonic eigenfunction associated with $\lambda_{2}(G')$.
\end{lemma}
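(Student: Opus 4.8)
The plan is to argue entirely through the variational characterization (\ref{eq:c1}). Minimizing the Rayleigh quotient over additive constants shows that for every non-constant function $g$ on $V(G)$,
$$\lambda_{2}(G)\le \frac{\sum_{xy\in E(G)}(g(x)-g(y))^{2}}{\sum_{x}d_{G}(x)g(x)^{2}-\frac{1}{2|E(G)|}(\sum_{x}d_{G}(x)g(x))^{2}},$$
so it suffices to exhibit one $g$ whose right-hand side is at most $\lambda_{2}(G')$. I would manufacture $g$ from a harmonic eigenfunction $f$ of $G'$ associated with $\lambda:=\lambda_{2}(G')$. The single structural fact I extract first is the eigen-equation of $f$ at the pendent vertex $v_{e}$, whose only neighbour is $u_{e}$: it reads $f(v_{e})-f(u_{e})=\lambda f(v_{e})$, i.e. $f(u_{e})=(1-\lambda)f(v_{e})$, and this identity will govern every error term below.

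The construction I propose keeps $f$ on the $G_{1}$-side and reflects it on the $G_{2}$-side, so that the cut edge $uv$ of $G$ reproduces the pendent edge $u_{e}v_{e}$ of $G'$ exactly. Using the natural identification of $V(G)$ with $(V(G_{1})\setminus\{u\})\cup(V(G_{2})\setminus\{v\})\cup\{u,v\}$, I set $g=f$ on $V(G_{1})\setminus\{u\}$, $g(u)=f(u_{e})$, $g(v)=f(v_{e})$, and $g(y)=f(u_{e})+f(v_{e})-f(y)$ for $y\in V(G_{2})\setminus\{v\}$, the reflection of $f|_{G_{2}}$ about the midpoint of $f(u_{e})$ and $f(v_{e})$. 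Since reflections and translations preserve pairwise differences, every edge of $G$ carries the same squared difference as a corresponding edge of $G'$: the interior $G_{1}$- and $G_{2}$-edges are unchanged, the edges at $v$ match the $G_{2}$-edges at $u_{e}$, and the bridge $uv$ contributes $(f(u_{e})-f(v_{e}))^{2}$, exactly the pendent term. Hence the numerators agree, $\sum_{xy\in E(G)}(g(x)-g(y))^{2}=\sum_{xy\in E(G')}(f(x)-f(y))^{2}$.

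What remains, and this is the heart of the matter, is to show the denominator does not shrink: the degree-weighted variance of $g$ on $G$ must be at least $\sum_{x}d_{G'}(x)f(x)^{2}$, which (as $f\perp D(G'){\bf j}$) is the variance of $f$ on $G'$. Together with numerator equality this gives $\lambda_{2}(G)\le\lambda_{2}(G')$. Expanding and using only $f\perp D(G'){\bf j}$ and the pendent relation $f(u_{e})=(1-\lambda)f(v_{e})$, the difference of the two sides collapses to $\frac{1}{2|E(G)|}KK'$, where $K=\sum_{x}d_{G}(x)g(x)$ and $K'=2|E(G)|(f(u_{e})+f(v_{e}))-K$ is the same quantity for the symmetric test function obtained by reflecting on $G_{1}$ instead of $G_{2}$. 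The obstacle is therefore to prove $KK'\ge 0$; the simple global identities do not pin down the signs, so I expect the delicate step to be a sign analysis that feeds the remaining eigen-equations of $f$ inside $G_{2}$ (equivalently $G_{1}$) into $K$ and $K'$. This is the only genuinely technical point.

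For strictness I argue by contradiction. If $\lambda_{2}(G)=\lambda_{2}(G')$, then the chain $\lambda_{2}(G)\le(\text{quotient of }g)\le\lambda_{2}(G')$ forces the projected function $\hat g$ to be a harmonic eigenfunction of $G$ for $\lambda$. Writing out the eigen-equation of $\hat g$ at any vertex of $V(G_{1})\setminus\{u\}$ and comparing it with the corresponding eigen-equation of $f$ in $G'$, the shifts cancel and one obtains $\sum_{x}d_{G}(x)g(x)=0$; the eigen-equation at any $y\in V(G_{2})\setminus\{v\}$ then collapses, after the same comparison, to $f(u_{e})+f(v_{e})=0$, whence $(2-\lambda)f(v_{e})=0$ and so $f(v_{e})=0$. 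Both steps use $|V(G_{1})|,|V(G_{2})|\ge 2$, and $\lambda<2$ since $G'$ is connected of order at least three. Contrapositively, $f(v_{e})\ne 0$ yields the strict inequality $\lambda_{2}(G)<\lambda_{2}(G')$.
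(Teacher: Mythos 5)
Your reductions are correct as far as they go, but the step you postpone is not a technical detail — it is false, and for your construction it cannot be repaired. (Note also that the paper itself offers no proof to compare against: Lemma~\ref{le:3} is quoted from \cite{Jian}.) You correctly verify that the reflected function $g$ has the same numerator as $f$, and your identity for the denominator deficit is right: after the optimal shift it equals $\frac{KK'}{2|E(G)|}$, and indeed reflecting on $G_1$ instead of $G_2$ gives the \emph{same} deficit, since $g+h$ is the constant $f(u_e)+f(v_e)$ — so switching sides cannot help. But $KK'\ge 0$ fails in general. Take $G=P_6$, the path $x_2x_1uvy_1y_2$, with cut edge $uv$, so that $G'$ is the spider with center $u_e$, two legs $x_1x_2$ and $y_1y_2$ of length two, and the pendent vertex $v_e$. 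Then $\lambda_{2}(G')=1-\frac{\sqrt{2}}{2}$ is a \emph{simple} eigenvalue, with harmonic eigenfunction $f(x_2)=1$, $f(x_1)=\frac{\sqrt{2}}{2}$, $f(u_e)=f(v_e)=0$, $f(y_1)=-\frac{\sqrt{2}}{2}$, $f(y_2)=-1$. Your reflection gives $g=\bigl(1,\frac{\sqrt{2}}{2},0,0,\frac{\sqrt{2}}{2},1\bigr)$, so $K=\sum_x d_G(x)g(x)=2+2\sqrt{2}$, while $s:=f(u_e)+f(v_e)=0$ forces $K'=-K$ and $KK'=-(2+2\sqrt{2})^2<0$. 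Concretely, the optimally shifted $g$ has Rayleigh quotient $\frac{40-20\sqrt{2}}{28-8\sqrt{2}}\approx 0.70$, far above $\lambda_{2}(G')\approx 0.29$, so $g$ certifies nothing; and since the eigenvalue is simple and $KK'$ is quadratic in $f$, no choice or rescaling of the eigenfunction can fix the sign. The failure lies in the reflection construction itself, not merely in your unfinished sign analysis of it.

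What a correct proof needs is a different transplantation, and that is where the real content of the cited result lies. For instance, \emph{translating} instead of reflecting on one side, $g(y)=f(y)+\bigl(f(v_e)-f(u_e)\bigr)$ for $y\in V(G_2)\setminus\{v\}$, also preserves every edge difference, and its weighted mean and denominator deficit are both proportional to $\delta:=f(v_e)-f(u_e)=\lambda f(v_e)$ (your pendent relation). In particular, when $f(v_e)=0$ — exactly the situation of the $P_6$ example, where your reflection breaks down — the translated function is already orthogonal to $D(G){\bf j}$ with unchanged denominator, and $\lambda_{2}(G)\le\lambda_{2}(G')$ follows at once. When $f(v_e)\neq 0$ the error term is genuinely sign-dependent, and controlling it requires analyzing the sign structure of $f$ (its nodal domains) to decide on which side of the cut edge to translate; the strictness assertion comes out of that same analysis. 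Your proposal stops exactly where this argument begins: the claim you defer as ``the only genuinely technical point'' is the theorem.
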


\noindent\begin{lemma}\label{le:4} (\cite{Butler}) For $n \geq 3$, $\lambda_{2}(C_n) =1-cos(\frac{2\pi}{n})$.
\end{lemma}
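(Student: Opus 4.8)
The plan is to exploit the fact that $C_n$ is $2$-regular, which collapses the normalized Laplacian to a simple affine function of the adjacency matrix, and then to diagonalize the latter using its circulant structure. First I would observe that since every vertex of $C_n$ has degree $2$, the degree matrix is $D(C_n)=2I$, so that
\begin{equation*}
\mathcal{L}(C_n)=D^{-1/2}L D^{-1/2}=\tfrac{1}{2}L(C_n)=\tfrac{1}{2}\bigl(2I-A(C_n)\bigr)=I-\tfrac{1}{2}A(C_n).
\end{equation*}
Consequently, if $\mu$ is an eigenvalue of $A(C_n)$ then $1-\tfrac{1}{2}\mu$ is an eigenvalue of $\mathcal{L}(C_n)$ with the same eigenvector, and the ordering is reversed: the largest adjacency eigenvalue yields the smallest normalized Laplacian eigenvalue.

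Next I would compute the spectrum of $A(C_n)$. Labelling the vertices $0,1,\dots,n-1$ cyclically, $A(C_n)$ is a circulant matrix, so I would test the candidate vectors $v_j=(1,\omega^{j},\omega^{2j},\dots,\omega^{(n-1)j})^{T}$, where $\omega=e^{2\pi i/n}$ is a primitive $n$-th root of unity, for $j=0,1,\dots,n-1$. A direct substitution, using that the $k$-th coordinate of $A(C_n)v_j$ equals $\omega^{(k-1)j}+\omega^{(k+1)j}=(\omega^{j}+\omega^{-j})\omega^{kj}$, shows $A(C_n)v_j=2\cos(\tfrac{2\pi j}{n})v_j$. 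Hence the $n$ eigenvalues of $A(C_n)$ are exactly $2\cos(\tfrac{2\pi j}{n})$ for $j=0,\dots,n-1$.

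Combining the two steps, the eigenvalues of $\mathcal{L}(C_n)$ are $1-\cos(\tfrac{2\pi j}{n})$ for $j=0,1,\dots,n-1$. The quantity $\cos(\tfrac{2\pi j}{n})$ attains its maximum value $1$ only at $j=0$, which recovers $\lambda_{1}(C_n)=0$ as expected; its next largest value is $\cos(\tfrac{2\pi}{n})$, attained at $j=1$ and $j=n-1$. Therefore the second smallest eigenvalue is $\lambda_{2}(C_n)=1-\cos(\tfrac{2\pi}{n})$, as claimed. There is no serious obstacle in this argument; the only point demanding a moment of care is the monotonicity step that identifies $j=1$ (and by symmetry $j=n-1$) as the index producing the second smallest eigenvalue, which follows from the symmetry $j\leftrightarrow n-j$ together with the unimodality of the cosine on $[0,2\pi]$.
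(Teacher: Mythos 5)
Your proof is correct and complete. Note that the paper itself offers no proof of this lemma: it is quoted directly from Butler's dissertation \cite{Butler}, so there is nothing internal to compare against, and your argument fills that gap with the standard computation. The route you take---$2$-regularity collapses $\mathcal{L}(C_n)$ to $I-\tfrac{1}{2}A(C_n)$, the circulant structure of $A(C_n)$ gives the adjacency spectrum $2\cos(\tfrac{2\pi j}{n})$ for $j=0,\dots,n-1$, and the order-reversing affine map then transfers this to the normalized Laplacian spectrum---is precisely the classical derivation, and it is the natural one given the context. Two small remarks. First, the eigenvectors $v_j$ you test are complex, which is harmless since $A(C_n)$ is real symmetric: the eigenvalues extracted are real, and real eigenvectors can be recovered by taking real and imaginary parts of $v_j$, so the spectral conclusion stands. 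Second, your closing monotonicity step is sound and worth stating exactly as you did: among $j=1,\dots,n-1$ the quantity $\cos(\tfrac{2\pi j}{n})$ is maximized precisely at $j=1$ and $j=n-1$ (the angles closest to $0$ and $2\pi$), so $\lambda_2(C_n)=1-\cos(\tfrac{2\pi}{n})$, with multiplicity two coming from the pair $j\in\{1,n-1\}$; this multiplicity does not affect the identification of the second smallest eigenvalue.
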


Let $\mathcal{U}_n$ be the set of unicyclic graphs of order $n$, and $\mathcal{U}_n^g$ be the
set of unicyclic graphs of order $n$ with girth $g$ ($3 \leq g \leq n$). Let $C^1_{n-1}$ be the unicyclic graph of order
$n$ obtained by attaching a pendent edge to the cycle $C_{n-1}$. Clearly, if $U\in \mathcal{U}_n^n$, then $U\cong C_n$;
if $U\in \mathcal{U}_n^{n-1}$, then $U\cong C^1_{n-1}$.

For each $U\in\mathcal{U}_n^g$, $U$ consists of the (unique) cycle (say $C_g$ ) of length $g$ and a certain number of trees attached at
vertices of $C_g$ having (in total) $n-g$ edges. We assume that the vertices of $C_g$ are $v_1, v_2,\cdots, v_g$ (ordered in a natural
way around $C_g$, say in the clockwise direction). Then $U$ can be written as $C_g (T_1,\cdots, T_g )$, which is obtained from the cycle
$C_g$ on vertices $v_1,v_2,\cdots,v_g$ by identifying $v_i$ with the root of a rooted tree $T_i$ of order $n_i$ for each $i=1,\cdots,g,$ where
$n_i\geq1$ and $\sum\limits_{i=1}^g n_i =n$. If $T_i$, for each $i$, is a star of order $n_i$, whose root is a vertex of maximum degree,
then we write $U=S_g (n_1,\cdots,n_g)$.

\noindent\begin{lemma}\label{le:5}  (\cite{Jian}) Let $U$ be $C_g (T_1,\cdots,T_g)$, where $|V(T_i)| = n_i\geq1$ for $i = 1,\cdots,g$ and $\sum\limits_{i=1}^g n_i = n$. Then $\lambda_{2}(U) \leq \lambda_{2}(S_g (n_1,\cdots,n_g))$.
\end{lemma}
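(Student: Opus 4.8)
The plan is to transform $U = C_g(T_1,\ldots,T_g)$ into $S_g(n_1,\ldots,n_g)$ by a finite sequence of edge-separating operations, each of which, by Lemma \ref{le:3}, cannot decrease $\lambda_2$. Since separating an edge preserves the total number of vertices, chaining these inequalities will give $\lambda_2(U) \leq \lambda_2(S_g(n_1,\ldots,n_g))$, which is exactly the claim. The whole argument rests on choosing, at each stage, one cut edge whose separation pushes the current graph strictly closer to the star form while keeping it in the class $C_g(T_1,\ldots,T_g)$.

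The key operation I would use is as follows. Suppose some $T_i$ is not a star centered at its root $v_i$; then $T_i$ contains a vertex at distance $\geq 2$ from $v_i$, so there is a vertex $u$ adjacent to $v_i$ in $T_i$ that itself has a child $w$. I would separate the cut edge $e = v_iu$. Deleting $e$ splits $U$ into the subtree rooted at $u$ (containing at least the two vertices $u,w$) and the remaining graph (containing the whole cycle $C_g$, hence at least $g \geq 3$ vertices); thus both components have order $\geq 2$, and Lemma \ref{le:3} applies to give $\lambda_2(U) \leq \lambda_2(U')$, where $U'$ is the separated graph.

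Next I would verify that $U'$ stays in the required class and that progress is made. Contracting $v_iu$ to a new vertex $u_e$ and attaching the new pendant $v_e$ keeps $u_e$ on the cycle, since $u_e$ inherits the two cycle edges formerly at $v_i$; hence $U'$ is again unicyclic of girth $g$, of the form $C_g(T_1',\ldots,T_g')$ with $T_j' = T_j$ for $j \neq i$ and $|V(T_i')| = n_i$ unchanged. Under this contraction every child of $u$, and all of its descendants, moves one level closer to the root, while $u$ is replaced by the pendant $v_e$ at the same distance $1$; consequently the potential
\[
\sigma(U) = \sum_{i=1}^{g}\sum_{w \in V(T_i)} d_{T_i}(v_i, w)
\]
strictly decreases, by the number of proper descendants of $u$ (at least one).

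Since $\sigma$ is a nonnegative integer that strictly decreases at each step and attains its minimum value $\sum_{i=1}^{g}(n_i - 1)$ exactly when every $T_i$ is a star centered at $v_i$, the process must terminate after finitely many steps at $S_g(n_1,\ldots,n_g)$. Combining the resulting chain of inequalities from Lemma \ref{le:3} then yields $\lambda_2(U) \leq \lambda_2(S_g(n_1,\ldots,n_g))$. I expect the main obstacle to be the bookkeeping in the third paragraph: carefully confirming that each separation keeps the graph in the class $C_g(T_1',\ldots,T_g')$ with the same tree orders, and that a legitimate cut edge (with both sides of order $\geq 2$) is always available until the star is reached. Selecting the root-to-internal-child edge $v_iu$ is what resolves both difficulties simultaneously.
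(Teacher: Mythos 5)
Your proposal is correct, and it is essentially the intended argument: the paper itself states this lemma without proof (it is quoted from \cite{Jian}, where it appears precisely as an application of the edge-separating theorem, i.e.\ Lemma \ref{le:3}), and your derivation---repeatedly separating a root-to-internal-child cut edge $v_iu$, checking both components have order at least $2$, noting the operation preserves the girth and the tree orders $n_i$, and using the distance-sum potential $\sigma$ to force termination at $S_g(n_1,\ldots,n_g)$---is exactly that mechanism, carried out with the needed bookkeeping. No gap; nothing further is required.
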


\noindent\begin{lemma}\label{le:6} Suppose that $U\in\mathcal{U}_n$, with girth $g$ and $n\geq21$. If $\lambda_{2}(U)\geq1-\frac{\sqrt{6}}{3}$, then $g\leq5$.
\end{lemma}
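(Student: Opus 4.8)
The plan is to prove the contrapositive: assuming $g\ge 6$, I will show $\lambda_2(U)<1-\frac{\sqrt6}{3}$, splitting the argument according to the size of the girth. First I would treat large girth. Starting from $U$ and repeatedly deleting a pendent vertex, Lemma \ref{le:2} gives $\lambda_2(U)\le\lambda_2(C_g)$, because deleting all the tree vertices eventually leaves the cycle $C_g$. By Lemma \ref{le:4}, $\lambda_2(C_g)=1-\cos\frac{2\pi}{g}$, which is strictly decreasing in $g$; since a direct check gives $\cos\frac{2\pi}{11}>\frac{\sqrt6}{3}$, we obtain $\lambda_2(C_g)\le\lambda_2(C_{11})<1-\frac{\sqrt6}{3}$ for every $g\ge 11$. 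This disposes of all girths $g\ge 11$ without using the hypothesis $n\ge 21$.

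The remaining range is $6\le g\le 10$, where $\lambda_2(C_g)\ge\lambda_2(C_{10})=1-\cos 36^{\circ}\approx 0.191>1-\frac{\sqrt6}{3}$, so the cycle bound is too weak and the hypothesis $n\ge 21$ must enter. Here I would first apply Lemma \ref{le:5} to replace every attached tree by a star, giving $\lambda_2(U)\le\lambda_2(S_g(n_1,\dots,n_g))$; thus it suffices to bound $\lambda_2$ on the family $S_g(n_1,\dots,n_g)$. Next, since removing a star leaf is the removal of a pendent vertex, Lemma \ref{le:2} shows that deleting leaves only increases $\lambda_2$; hence $\lambda_2(S_g(n_1,\dots,n_g))$ is largest when the order is smallest, and by peeling off $n-21$ leaves (keeping each $n_i\ge 1$, which is possible as $g\le 10<21$) I may assume $n=21$. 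The problem is thereby reduced to the finitely many configurations $S_g(n_1,\dots,n_g)$ with $\sum n_i=21$ and $6\le g\le 10$.

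To evaluate these, I would use the pendent structure directly in $Lf=\lambda Df$. For a leaf $u$ attached to $v_i$ one gets $f(u)=f(v_i)/(1-\lambda)$, and substituting this into the equation at each cycle vertex $v_i$ (which carries $p_i=n_i-1$ leaves) collapses the problem to the $g\times g$ weighted-cycle system
\begin{equation*}
\Big[(2+p_i)(1-\lambda)-\tfrac{p_i}{1-\lambda}\Big]\phi_i=\phi_{i-1}+\phi_{i+1},\qquad i=1,\dots,g,
\end{equation*}
where $\phi_i=f(v_i)$ and indices are read modulo $g$. For a uniform distribution of leaves this diagonalizes by the Fourier modes of $C_g$, and the second smallest eigenvalue comes from the mode $\cos\frac{2\pi}{g}$; for the fully concentrated star $S_g(21-g+1,1,\dots,1)$ the symmetric part reduces to an explicit quadratic in $(1-\lambda)^2$ whose relevant root is computable in closed form (for $g=6$, $n=21$ it gives $\lambda_2\approx 0.169<1-\frac{\sqrt6}{3}$). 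The plan is to show the maximum of $\lambda_2(S_g(n_1,\dots,n_g))$ over all partitions of $21$ is attained at such an extremal configuration and stays below $1-\frac{\sqrt6}{3}$.

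The hard part is exactly this last step: the finite but delicate verification for $6\le g\le 10$ that no $21$-vertex configuration reaches the threshold. The obstacle has two faces. First, one must identify which distribution of leaves maximizes $\lambda_2$; the computations above suggest that concentration at a single vertex is the worst case, but this monotonicity under merging of branches is not supplied by the quoted lemmas and must be argued separately (or replaced by a direct scan over partitions). Second, the numerical margin is narrow — the extremal values sit near $0.17$ against a threshold $1-\frac{\sqrt6}{3}\approx 0.1835$, and for $g=9,10$ the base cycle itself already exceeds the threshold, so the estimate of how fast the added leaves depress $\lambda_2$ must be quantitative and tight. It is precisely to secure a uniform cushion in this estimate that the hypothesis $n\ge 21$ is imposed.
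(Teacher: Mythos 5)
Your skeleton is the same as the paper's: prove the contrapositive, kill $g\geq 11$ by Lemmas \ref{le:2} and \ref{le:4} plus monotonicity of $1-\cos(2\pi/g)$, and for $6\leq g\leq 10$ use Lemma \ref{le:5} to pass to $S_g(n_1,\dots,n_g)$ and Lemma \ref{le:2} to peel leaves (both reductions are sound, including your observation that peeling to order exactly $21$ is legitimate since $g\leq 10<21$). The local elimination $f(u)=f(v_i)/(1-\lambda)$ and the resulting weighted-cycle system are also correct. But the proof stops precisely where the paper's actual work begins: the finite verification for $6\leq g\leq 10$ is never carried out, and you say so yourself. What you offer in its place is a claimed extremality principle --- that $\lambda_2$ over all arrangements with $\sum n_i=21$ is maximized by the fully concentrated configuration $S_g(n-g+1,1,\dots,1)$ --- and this is both unproven and, on the evidence of the paper's own numbers, false as a guide to the worst case: for $g=6$ the concentrated configuration is bounded by $\lambda_2(S_6(12,1,1,1,1,1))\doteq 0.17959$, while certain spread-out configurations (Subcase 6.4.1 of the paper) are only bounded by $0.18104$, i.e. the spread-out cases sit closer to the threshold $1-\frac{\sqrt 6}{3}\doteq 0.1835$ than the concentrated one. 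So "merging branches increases $\lambda_2$" cannot be the mechanism, and no lemma you quote supplies any such monotonicity.

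The fallback you mention, "a direct scan over partitions," also understates the task in a way that matters. The graphs $S_g(n_1,\dots,n_g)$ are determined by the cyclic arrangement of $(n_1,\dots,n_g)$ around $C_g$, not by the partition alone (the paper must separately compute, e.g., $G_5$ through $G_{13}$ for the different placements of two trees of order $5$ on $C_7$), so the finite set is a set of necklaces of compositions, and the margins are of order $10^{-2}$ or smaller, so every member needs an actual eigenvalue computation or a peeling bound. The paper organizes exactly this: a pigeonhole case analysis on $N=\{i\mid n_i\geq 2\}$, in each case peeling down not to order $21$ but to one of a short list of explicit small graphs ($G_1,\dots,G_{66}$ and a few stars such as $S_8(4,1,\dots,1)$, $S_7(6,1,\dots,1)$, $S_6(12,1,\dots,1)$) whose values of $\lambda_2$ are computed and all verified to lie below $1-\frac{\sqrt{6}}{3}$. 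Until you either perform an equivalent exhaustive computation or prove a genuine extremality statement that collapses the configuration space, the central claim of the lemma for $6\leq g\leq 10$ remains unestablished.
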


\begin{proof} We shall prove the contrapositive of the theorem. Let $C_g$ be the unique cycle in $U$, where $g\geq6$. We consider the following six cases.
\\{ \bf Case 1.} $g\geq11$.

Then Lemmas \ref{le:5}, \ref{le:2} and \ref{le:4} imply that $\lambda_{2}(U)\leq\lambda_{2}(C_{11})=1-cos(\frac{2\pi}{11})=0.15875<1-\frac{\sqrt{6}}{3}$.
\\{ \bf Case 2.} $g=10$.

Then Lemmas \ref{le:5} and \ref{le:2} imply that $\lambda_{2}(U)\leq\lambda_{2}(C_{10}^1)\doteq 0.15633<1-\frac{\sqrt{6}}{3}$.
\\{ \bf Case 3.} $g=9$.

We rewrite $U$ in the form $C_9 (T_1,T_2\cdots, T_9 )$, where $|V(T_i)| = n_i\geq1$ for $i = 1,2,\cdots,9$ and $\sum\limits_{i=1}^9 n_i = n$. Then we can know that there exists some $j\in\{1,2,\cdots,9\}$ such that $n_j\geq3$ since $n\geq21$. Then Lemmas \ref{le:5} and \ref{le:2} imply that $\lambda_{2}(U)\leq\lambda_{2}(S_9 (3,1,1,1,1,1,1,1,1))\doteq0.15875<1-\frac{\sqrt{6}}{3}$.
\\{ \bf Case 4.} $g=8$.

We rewrite $U$ in the form $C_8 (T_1,T_2,\cdots, T_8 )$, where $|V(T_i)| = n_i\geq1$ for $i = 1,2,\cdots,8$ and $\sum\limits_{i=1}^8 n_i = n$. Let $N=\{i\mid n_i\geq2, i=1,2,\cdots,8\}$.
\begin{figure}[htbp]
  \centering
  \includegraphics[scale=0.6]{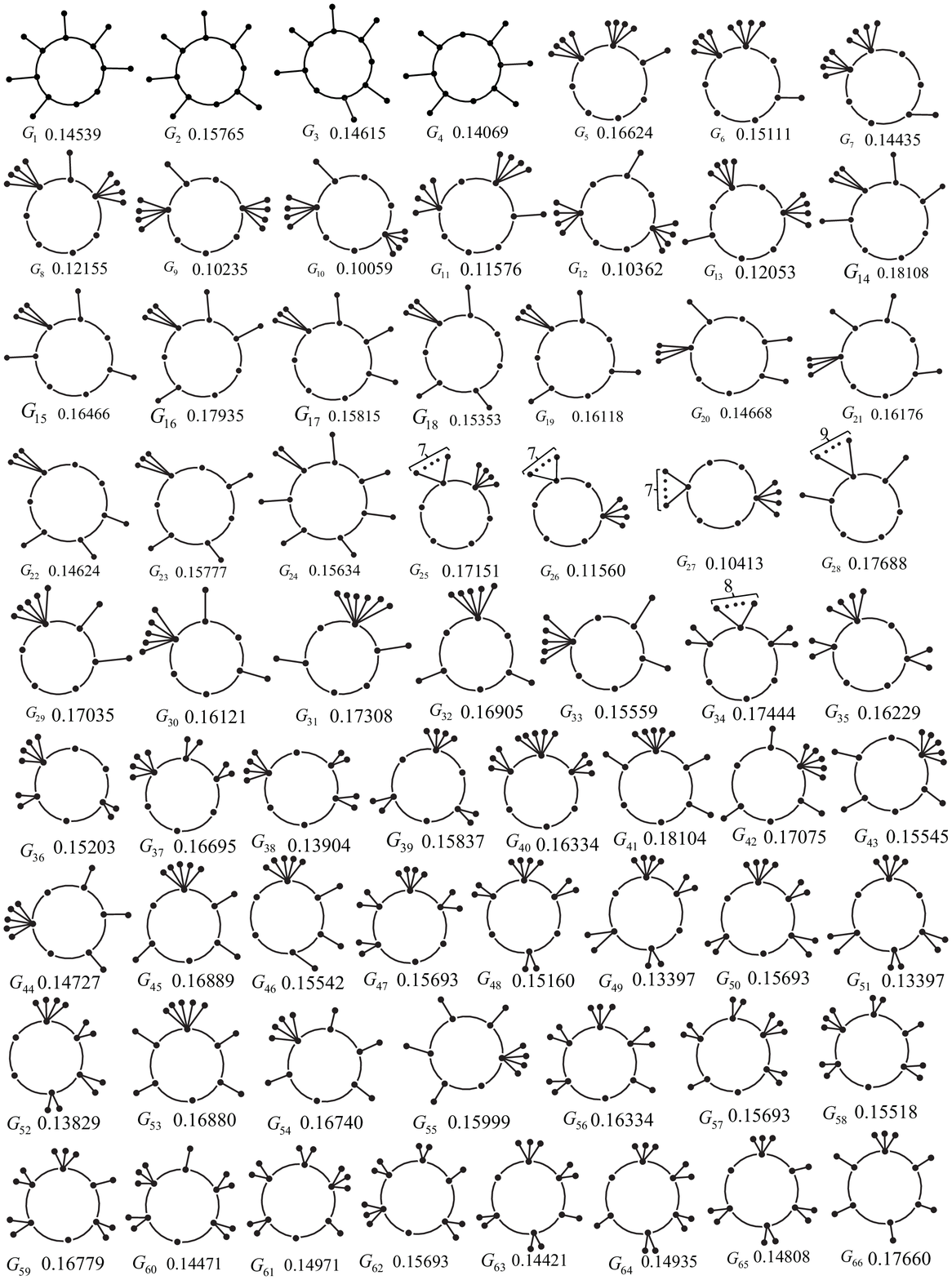}
  \caption{Unicyclic graphs $G_{i}$ and $\lambda_{2}(G_{i})$, $1\leq i\leq66$.}\label{Fig.2.}
\end{figure}
\\{ \bf Subcase 4.1.} If $|N|\geq6$, Hence Lemmas \ref{le:5} and \ref{le:2} imply that
$\lambda_{2}(U)\leq\max\{\lambda_{2}(G_1),$ $\lambda_{2}(G_2),$ $\lambda_{2}(G_3),\lambda_{2}(G_4)\}\doteq0.15765<1-\frac{\sqrt{6}}{3}$,
where $G_{i}$ ($1\leq i\leq4$) are shown in Figure \ref{Fig.2.}.
\\{ \bf Subcase 4.2.} If $|N|\leq5$, then there exists some $j\in\{1,2,\cdots,8\}$ such that $n_j\geq4$ since $n\geq21$. Hence Lemmas \ref{le:5} and \ref{le:2} imply that
$\lambda_{2}(U)\leq\lambda_{2}(S_8 (4,1,1,1,1,1,1,1))\doteq0.16841<1-\frac{\sqrt{6}}{3}$.
\\{ \bf Case 5.} $g=7$.

We rewrite $U$ in the form $C_7 (T_1,T_2\cdots, T_7 )$, where $|V(T_i)| = n_i\geq1$ for $i = 1,2,\cdots,7$ and $\sum\limits_{i=1}^7 n_i = n$. Let
$N=\{i\mid n_i\geq2, i=1,2,\cdots,7\}$.
\\{ \bf Subcase 5.1.} If $|N|\leq3$, then there exists some $j\in\{1,2,\cdots,7\}$ such that $n_j\geq6 $ since $n\geq21$. Hence Lemmas \ref{le:5} and \ref{le:2} imply that
$\lambda_{2}(U)\leq\lambda_{2}(S_7(6,1,1,1,1,1,1))$ $\doteq0.17465<1-\frac{\sqrt{6}}{3}$.
\\{ \bf Subcase 5.2.} If $|N|=4$, then there exists some $j\in\{1,2,\cdots,7\}$ such that $n_j\geq5$ since $n\geq21$.
\\{ \bf Subcase 5.2.1.} If $n_j\geq6$, then Lemmas \ref{le:5} and \ref{le:2} imply that
$\lambda_{2}(U)\leq\lambda_{2}(S_7(6,1,1,1,1,1,1))$ $\doteq0.17465<1-\frac{\sqrt{6}}{3}$.
\\{\bf Subcase 5.2.2.} If $n_j=5$, then there exists some $k\in\{1,2,\cdots,7\}$ and $k\neq j$
such that $n_k\geq5$ since $n\geq21$. Hence Lemmas \ref{le:5} and \ref{le:2} imply that
$\lambda_{2}(U)\leq\max\{\lambda_{2}(G_i)\mid 5\leq i\leq13\}\doteq0.16624<1-\frac{\sqrt{6}}{3}$, where $G_{i}$ ($5\leq i\leq13$) are shown in Figure \ref{Fig.2.}.
\\{ \bf Subcase 5.3.} If $|N|=5$ or 6, then there exists some $j\in\{1,2,\cdots,7\}$ such that $n_j\geq4$ since $n\geq21$. Hence Lemmas \ref{le:5} and \ref{le:2} imply that
$\lambda_{2}(U)\leq\max\{\lambda_{2}(G_i)\mid 14\leq i\leq23\}\doteq0.18108<1-\frac{\sqrt{6}}{3}$, where $G_{i}$ ($14\leq i\leq23$) are shown in Figure \ref{Fig.2.}.
\\{ \bf Subcase 5.4.} If $|N|=7$, then there exists some $j\in\{1,2,\cdots,7\}$ such that $n_j\geq3$ since $n\geq21$. Hence Lemmas \ref{le:5} and \ref{le:2} imply that
$\lambda_{2}(U)\leq\lambda_{2}(G_{24})\doteq0.15634<1-\frac{\sqrt{6}}{3}$, where $G_{24}$ is shown in Figure \ref{Fig.2.}.
\\{\bf Case 6.} $g=6$.

We rewrite $U$ in the form $C_6(T_1,T_2,\cdots, T_6 )$, where $|V(T_i)| = n_i\geq1$ for $i = 1,2,\cdots,6$ and $\sum\limits_{i=1}^6 n_i = n$. Let $N=\{i\mid n_i\geq2, i=1,2,\cdots,6\}$.
\\{ \bf Subcase 6.1.} If $|N|=1$, then there exists some $j\in\{1,2,\cdots,6\}$ such that $n_j\geq16 $ since $n\geq21$. Hence Lemmas \ref{le:5} and \ref{le:2} imply that
$\lambda_{2}(U)\leq\lambda_{2}(S_6(16,1,1,1,1,1))\leq\lambda_{2}(S_6(12,1,1,1,1,1))$ $\doteq0.17959<1-\frac{\sqrt{6}}{3}$.
\\{ \bf Subcase 6.2.} If $|N|=2$, let $N=\{i,k\}$. Without lose of generality, we assume $n_i\leq n_k$.
\\{ \bf Subcase 6.2.1.} If $n_i\leq4$, then $n_k\geq13$ since $n\geq21$.
Hence Lemmas \ref{le:5} and \ref{le:2} imply that
$\lambda_{2}(U)\leq\lambda_{2}(S_6(13,1,1,1,1,1))\leq\lambda_{2}(S_6(12,1,1,1,1,1))\doteq0.17959<1-\frac{\sqrt{6}}{3}$.
\\{ \bf Subcase 6.2.2.} If $5\leq n_i\leq n_k$, then $n_k\geq8$ since $n\geq21$. Hence Lemmas \ref{le:5} and \ref{le:2} imply that
$\lambda_{2}(U)\leq\max\{\lambda_{2}(G_i)\mid 25\leq i\leq27\}\doteq0.17151<1-\frac{\sqrt{6}}{3}$, where $G_{i}$ ($25\leq i\leq27$)
are shown in Figure \ref{Fig.2.}.
\\{ \bf Subcase 6.3.} If $|N|=3$, let $N=\{i,j,k\}$. Without lose of generality, we assume $n_i\leq n_j\leq n_k$.
\\{ \bf Subcase 6.3.1.} If $n_i=2$, $2\leq n_j\leq6$, then $n_k\geq10$ since $n\geq21$.  Hence Lemmas \ref{le:5} and \ref{le:2} imply that
$\lambda_{2}(U)\leq\max\{\lambda_{2}(G_i)\mid 28\leq i\leq33\}\doteq0.17688<1-\frac{\sqrt{6}}{3}$, where $G_{i}$ ($28\leq i\leq33$) are shown in Figure \ref{Fig.2.}.
\\{ \bf Subcase 6.3.2.} If $n_i=2$, $7\leq n_j\leq n_k$.  Hence Lemmas \ref{le:5} and \ref{le:2} imply that
$\lambda_{2}(U)\leq\max\{\lambda_{2}(G_i)\mid 29\leq i\leq33\}\doteq0.17308<1-\frac{\sqrt{6}}{3}$, where $G_{i}$  ($29\leq i\leq33$) are shown in Figure \ref{Fig.2.}.
\\{ \bf Subcase 6.3.3.} If $n_i=3$,
$3\leq n_j\leq6$, then $n_k\geq9$.  Hence Lemmas \ref{le:5} and \ref{le:2} imply that
$\lambda_{2}(U)\leq\max\{\lambda_{2}(G_i)\mid 34\leq i\leq39\}\doteq0.17444<1-\frac{\sqrt{6}}{3}$,
where $G_{i}$ ($34\leq i\leq39$) are shown in Figure \ref{Fig.2.}.
\\{ \bf Subcase 6.3.4.} If $n_i=3$, $7\leq n_j\leq n_k$. Hence Lemmas \ref{le:5} and \ref{le:2} imply that
$\lambda_{2}(U)\leq\max\{\lambda_{2}(G_i)\mid 35\leq i\leq39\}\doteq0.16695<1-\frac{\sqrt{6}}{3}$,
where $G_{i}$ ($35\leq i\leq39$) are shown in Figure \ref{Fig.2.}.
\\{ \bf Subcase 6.3.5.} If $4\leq n_i\leq n_j\leq n_k$, then $n_k\geq6$ since $n\geq21$.  Hence Lemmas \ref{le:5} and \ref{le:2} imply that
$\lambda_{2}(U)\leq\max\{\lambda_{2}(G_i)\mid 35\leq i\leq40\}\doteq0.16695<1-\frac{\sqrt{6}}{3}$,
where $G_{i}$ ($35\leq i\leq40$) are shown in Figure \ref{Fig.2.}.
\\{ \bf Subcase 6.4.} If $|N|=4$, then we discuss the following two cases.
\\{ \bf Subcase 6.4.1.} If there is at least one $i\in N$ such that $n_{i}=2$, then there exists some $j\in\{1,2,\cdots,6\}$ such that $n_j\geq6$ since $n\geq21$. Hence Lemmas \ref{le:5} and \ref{le:2} imply that
$\lambda_{2}(U)\leq\max\{\lambda_{2}(G_i)\mid 41\leq i\leq46\}\doteq0.18104<1-\frac{\sqrt{6}}{3}$, where $G_{i}$ ($41\leq i\leq46$) are shown in Figure \ref{Fig.2.}.
\\{ \bf Subcase 6.4.2.} If $n_i\geq3$ for each $i\in N$, then there exists some $j\in\{1,2,\cdots,6\}$ such that $n_j\geq5$ since $n\geq21$. Hence Lemmas \ref{le:5} and \ref{le:2} imply that
$\lambda_{2}(U)\leq\max\{\lambda_{2}(G_i)\mid 47\leq i\leq52\}\doteq0.15693<1-\frac{\sqrt{6}}{3}$, where $G_{i}$ ($47\leq i\leq52$) are shown in Figure \ref{Fig.2.}.
\\{ \bf Subcase 6.5} If $|N|=5$, then we discuss the following two cases.
\\{ \bf Subcase 6.5.1.} If there are at least two $i, j$ in $N$ such that $n_{i}=n_{j}=2$, then there exists some $k\in\{1,2,\cdots,6\}$ such that $n_k\geq6$ since $n\geq21$. Hence Lemmas \ref{le:5} and \ref{le:2} imply that
$\lambda_{2}(U)\leq\max\{\lambda_{2}(G_i)\mid 53\leq i\leq55\}\doteq0.16880<1-\frac{\sqrt{6}}{3}$, where $G_{i}$ ($53\leq i\leq55$) are shown in Figure \ref{Fig.2.}.
\\{ \bf Subcase 6.5.2.} If there is at most one $i$ in $N$ such that $n_{i}=2$, then there exists some $j\in\{1,2,\cdots,6\}$ such that $n_j\geq4$ since $n\geq21$. Hence Lemmas \ref{le:5} and \ref{le:2} imply that
$\lambda_{2}(U)\leq\max\{\lambda_{2}(G_i)\mid 56\leq i\leq65\}\doteq0.16779<1-\frac{\sqrt{6}}{3}$, where $G_{i}$ ($56\leq i\leq65$) are shown in Figure \ref{Fig.2.}.
\\{ \bf Subcase 6.6.} If $|N|=6$, then there exists some $k\in\{1,2,\cdots,7\}$ such that $n_k\geq4$ since $n\geq21$. Hence Lemmas \ref{le:5} and \ref{le:2} imply that
$\lambda_{2}(U)\leq\lambda_{2}(G_{66})\doteq0.17660<1-\frac{\sqrt{6}}{3}$, where $G_{66}$ is shown in Figure \ref{Fig.2.}.
\\From the discussion above, the proof is completed.
\end{proof}
\section{All unicyclic graphs of girth $5$ with $\lambda_{2}\geq1-\frac{\sqrt{6}}{3}$}

In this section, we determine all unicyclic graphs in $\mathcal{U}_n^5$ ($n\geq21$) with $\lambda_{2}\geq1-\frac{\sqrt{6}}{3}$.
Before introducing our results we recall some notation. Note that if $U\in\mathcal{U}_n^5$, then $U$ consists
of the cycle $C_5=v_1v_2v_3v_4v_5v_1$ and five trees $T_1$, $T_2$, $T_3$, $T_4$ and $T_5$ attached at the vertices $v_1$, $v_2$,
 $v_3$, $v_4$ and $v_5$, respectively.
\noindent\begin{theorem}\label{th:1} Suppose that $U\in\mathcal{U}_n^5$ with $n\geq21$.
Then $\lambda_{2}(U)\geq1-\frac{\sqrt{6}}{3}$ if and only if one of the following items holds:

(i) $U$ is isomorphic to $S_5(2,k,2,1,1)$ with $15\leq k\leq18$.

(ii) $U$ is isomorphic to $H_4$, or $H_5$, where $H_4$ and $H_5$ are shown in Figure \ref{Fig.13.}.

(iii) $U$ is unicyclic graph $G_0$, where $G_0$ is shown in Figure \ref{Fig.16.}, $l_0$ and $l_1$ are nonnegative integers,
$15\leq l_0+2l_1\leq29$, $0\leq l_1\leq7$ and one of the following items holds:

(1) $l_1=7$ and $l_0=1$.

(2) $l_1=6$ and $3\leq l_0\leq5$.

(3) $l_1=5$ and $5\leq l_0\leq9$.

(4) $l_1=4$ and $7\leq l_0\leq13$.

(5) $l_1=3$ and $9\leq l_0\leq17$.

(6) $l_1=2$ and $11\leq l_0\leq21$.

(7) $l_1=1$ and $13\leq l_0\leq25$.

(8) $l_1=0$ and $15\leq l_0\leq29$.

(iv) $U$ is isomorphic to $H_{42}$, where $H_{42}$ as shown in Figure \ref{Fig.16.}, and $l_i$ ($0\leq i\leq 2$) are nonnegative integers
and $l_0+2l_1+3l_2+5=n\geq21$.

Furthermore, the equality holds if and only if $U$ is isomorphic to $H_{42}$ with $l_2\geq2$.
\end{theorem}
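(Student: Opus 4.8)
The plan is to establish both implications and the equality clause by reducing every evaluation of $\lambda_{2}$ to the least positive root of a small polynomial extracted from a quotient of $\mathcal{L}(U)$, and comparing that root with $1-\frac{\sqrt{6}}{3}$. The whole argument is organized around one observation, which I would record first: $1-\frac{\sqrt{6}}{3}$ is the smaller root of $3x^{2}-6x+1=0$, and this quadratic is exactly the secular equation of the following local mode. Suppose a vertex $v$ of $U$ carries two identical \emph{cherry} branches, each a vertex $a$ of degree $3$ adjacent to $v$ and to two leaves. On the three vertices of one cherry there is a nonzero vector $\phi$ (with the convention $\phi(v)=0$) satisfying the local eigen-equations $(\mathcal{L}\phi)(w)=\mu\phi(w)$ at the three cherry vertices $w$, and a short calculation shows this forces $3\mu^{2}-6\mu+1=0$. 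Extending by $-\phi$ on the second cherry and by $0$ on all remaining vertices produces a function $f$ with $f\perp D{\bf j}$ (the two cherries' weighted contributions cancel) that satisfies $\mathcal{L}f=\mu f$ on all of $U$. Hence $1-\frac{\sqrt{6}}{3}$ is an eigenvalue of $\mathcal{L}(U)$ whenever $U$ has a vertex bearing two cherries, which explains both the value of the threshold and why equality is tied to the presence of at least two cherries.

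For the necessity direction I start from $U=C_{5}(T_{1},\dots,T_{5})$ and argue in two levels. By Lemma \ref{le:5}, $\lambda_{2}(U)\le\lambda_{2}(S_{5}(n_{1},\dots,n_{5}))$, so an admissible $U$ must have a degree vector $(n_{1},\dots,n_{5})$ whose star already clears the threshold. I would compute $\lambda_{2}(S_{5}(n_{1},\dots,n_{5}))$ via its symmetric quotient matrix---all pendants of one star share a single value of the harmonic eigenfunction, while the pendant-difference vectors contribute only the eigenvalue $1$---and thereby cut the admissible degree vectors down to a finite list of shapes. At the second level, for each such shape I must decide which genuine tree realizations $T_{i}$, not merely the star, keep $\lambda_{2}(U)\ge1-\frac{\sqrt{6}}{3}$. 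Here Lemmas \ref{le:2} and \ref{le:3} enter in the contrapositive: deleting a pendant or separating a cut edge can only raise $\lambda_{2}$, so if some such reduction of $U$ already lies below the threshold then $U$ does too. These reductions rule out every branch of depth at least three and every sufficiently bushy branch, leaving only pendant edges, length-two pendant paths, and cherries; counting how many of each and at which cycle vertices are compatible with the bound yields precisely families (i)--(iv).

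For sufficiency and the exact values I would, within each family, take a harmonic eigenfunction for $\lambda_{2}$, use that it is constant across symmetric branches, and reduce $\mathcal{L}(U)f=\lambda f$ to a fixed-size matrix whose entries depend only on the branch degrees and the multiplicities $k$ or $l_{0},l_{1},l_{2}$. The second smallest normalized Laplacian eigenvalue is the least positive root of the corresponding determinant, and I would verify it is $\ge1-\frac{\sqrt{6}}{3}$ throughout the stated parameter ranges, simultaneously locating the endpoints where each range stops (such as $15\le k\le18$ and $15\le l_{0}+2l_{1}\le29$). For $H_{42}$ with $l_{2}\ge2$, the cherry mode of the first paragraph already exhibits $1-\frac{\sqrt{6}}{3}$ as an eigenvalue with an eigenfunction orthogonal to $D{\bf j}$, and the quotient computation confirms it is in fact the second smallest; combined with strict inequalities for all the other listed graphs, this gives the equality clause.

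The principal obstacle is the necessity direction, because the monotonicity Lemmas \ref{le:2}, \ref{le:3} and \ref{le:5} only furnish inequalities: they efficiently exclude broad classes of branches but cannot themselves settle the near-extremal configurations, where $\lambda_{2}$ sits only slightly above $1-\frac{\sqrt{6}}{3}$. Deciding those borderline cases---the exact caps on $k$ and on $l_{0}+2l_{1}$, and the verification that no additional family arises---requires the explicit root computations of the third paragraph applied to each candidate, and organizing the two-level casework over degree vectors, branch types, and attachment vertices so that it is both exhaustive and free of redundancy is where the genuine effort lies.
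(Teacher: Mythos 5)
Your proposal is correct in outline and, at the structural level, it is the same proof as the paper's: both arguments run a two-level reduction in which Lemma \ref{le:5} restricts attention to star-type degree vectors $(n_1,\dots,n_5)$, and Lemmas \ref{le:2} and \ref{le:3}, used in the contrapositive, eliminate every branch of depth at least three and every overly bushy branch, leaving pendant edges, pendant paths of length two, and cherries to be counted case by case, with the parameter caps (e.g.\ $15\le k\le 18$, $15\le l_0+2l_1\le 29$) settled by eigenvalue evaluations at boundary configurations. Where you genuinely differ is in how the extremal families are certified. The paper applies Cauchy interlacing (Lemma \ref{le:1}) to the principal submatrix $\mathcal{L}_{v_1}(H_{42})$, whose spectrum is computable exactly because deleting $v_1$ leaves a disjoint union of tiny pieces; when $l_2\ge 2$ the value $1-\frac{\sqrt{6}}{3}$ occurs at least twice among the submatrix eigenvalues, so interlacing forces $\lambda_2(H_{42})=1-\frac{\sqrt{6}}{3}$, giving upper and lower bounds in one stroke and handling $l_2\in\{0,1\}$ uniformly. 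You instead exhibit the antisymmetric eigenvector supported on a pair of cherries; this is a nice explanation of where the quadratic $3x^2-6x+1$ comes from, but by itself it yields only the upper bound $\lambda_2\le 1-\frac{\sqrt{6}}{3}$ and must be combined with a separate sufficiency computation, so it carries less information per step than the interlacing argument. For sufficiency you propose symbolic quotient determinants in the parameters $k,l_0,l_1,l_2$, where the paper gets lower bounds more cheaply by deleting pendants from the maximal member of each family (Lemma \ref{le:2} read in reverse) and citing its computed $\lambda_2$; your route avoids reliance on the long table of numerical values but is substantially heavier, and given how close the relevant roots are to $1-\frac{\sqrt{6}}{3}\doteq 0.18350$, either route needs care. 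One caution about your phrasing: the second smallest eigenvalue is \emph{not} always the least positive root of the symmetric quotient, since the antisymmetric modes contribute eigenvalues $1-\frac{\sqrt{6}}{3}$ (cherry pairs), $1-\frac{\sqrt{2}}{2}$ (path pairs) and $1$ (pendant pairs); in the family $H_{42}$ with $l_2\ge 2$ it is exactly the antisymmetric cherry mode, not the quotient root, that realizes $\lambda_2$. Your equality paragraph implicitly accounts for this, but your sufficiency paragraph as worded does not, and the identification there should be replaced by a minimum over the quotient root and these fixed antisymmetric values.
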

\begin{proof} We rewrite $U$ in the form $C_5 (T_1,T_2,\cdots, T_5)$, where $|V(T_i)|=n_i\geq1$ for $i = 1,2,\cdots,5$ and $\sum\limits_{i=1}^5 n_i=n$. Let $N=\{i\mid n_i\geq2, i=1,2,\cdots,5\}$. In order to determine all unicyclic graphs in $\mathcal{U}_n^5$ ($n\geq21$) with $\lambda_{2}\geq1-\frac{\sqrt{6}}{3}$, the following cases are considered.
\\{ \bf Case 1.} $|N|=5$. We assume that $n_1',n_2',n_3',n_4',n_5'\in\{n_1,n_2,n_3,n_4,n_5\}$ and $2\leq n_1'\leq n_2'\leq n_3'\leq n_4'\leq n_5'$.
\\{ \bf Subcase 1.1.} If there are exactly two $n_1'$ and $n_2'$ such that $n_1'=n_2'=2$, and at least one $n_3'$ such that $n_3'=3$, then $n_5'\geq7 $ since $n\geq21$. Hence Lemmas \ref{le:5} and \ref{le:2} imply that $\lambda_{2}(U)\leq\max\{\lambda_{2}(G_i)\mid 73\leq i\leq76\}\doteq0.17531<1-\frac{\sqrt{6}}{3}$,
where $G_{i}$ ($73\leq i\leq76$) are shown in Figure \ref{Fig.11.}.
\\{ \bf Subcase 1.2.} If there are exactly two $n_1'$ and $n_2'$ such that $n_1'=n_2'=2$, and the others are at least 4, then $n_5'\geq6$ since $n\geq21$. Hence Lemmas \ref{le:5} and \ref{le:2} imply that $\lambda_{2}(U)\leq\max\{\lambda_{2}(G_i)\mid 74\leq i\leq77\}\doteq0.17953<1-\frac{\sqrt{6}}{3}$,
where $G_{i}$ ($74\leq i\leq77$) are shown in Figure \ref{Fig.11.}.
\\{ \bf Subcase 1.3.} If there is exactly one $n_1'$ such that $n_1'=2$, and the others are at least 3, then $n_5'\geq5$ since $n\geq21$. Hence Lemmas \ref{le:5} and \ref{le:2} imply that $\lambda_{2}(U)\leq\max\{\lambda_{2}(G_i)\mid 78\leq i\leq79\}\doteq0.18114<1-\frac{\sqrt{6}}{3}$,
where $G_{i}$ ($78\leq i\leq79$) are shown in Figure \ref{Fig.11.}.
\\{ \bf Subcase 1.4.} If there are at least three $n_1'$, $n_2'$ and $n_3'$ such that $n_1'=n_2'=n_3'=2$, then $n_5'\geq8$ since $n\geq21$. Hence Lemmas \ref{le:5} and \ref{le:2} imply that $\lambda_{2}(U)\leq\lambda_{2}(G_{80})\doteq0.16854<1-\frac{\sqrt{6}}{3}$,
where $G_{80}$ is shown in Figure \ref{Fig.11.}.
\\{ \bf Subcase 1.5.} If $n_i\geq3$ for each $i\in N$, and there is at least one $n_1'$ such that $n_1'=3$, then $n_5'\geq5$ since $n\geq21$. Hence Lemmas \ref{le:5} and \ref{le:2} imply that $\lambda_{2}(U)\leq\lambda_{2}(G_{81})\doteq0.16979<1-\frac{\sqrt{6}}{3}$, where $G_{81}$ is shown in Figure \ref{Fig.11.}.
\\{ \bf Subcase 1.6.} If $n_i\geq4$ for each $i\in N$, Hence Lemmas \ref{le:5} and \ref{le:2} imply that $\lambda_{2}(U)\leq\lambda_{2}(G_{82})\doteq0.16114<1-\frac{\sqrt{6}}{3}$, where $G_{82}$ is shown in Figure \ref{Fig.11.}.
\\{ \bf Case 2.} $|N|=4$. We assume that $n_1',n_2',n_3',n_4'\in\{n_1,n_2,n_3,n_4,n_5\}$ and $2\leq n_1'\leq n_2'\leq n_3'\leq n_4'$.
\\ { \bf Subcase 2.1.} If there is exactly one $n_1'$ such that $n_1'=2$, and at least one $n_2'$ such that $n_2'=3$, then $n_4'\geq8$ since $n\geq21$. Hence Lemmas \ref{le:5} and \ref{le:2} imply that $\lambda_{2}(U)\leq\max\{\lambda_{2}(G_i)\mid 83\leq i\leq88\}\doteq0.18204<1-\frac{\sqrt{6}}{3}$,
where $G_{i}$ ($83\leq i\leq88$) are shown in Figure \ref{Fig.11.}.
\begin{figure}[htbp]
  \centering
  \includegraphics[scale=0.6]{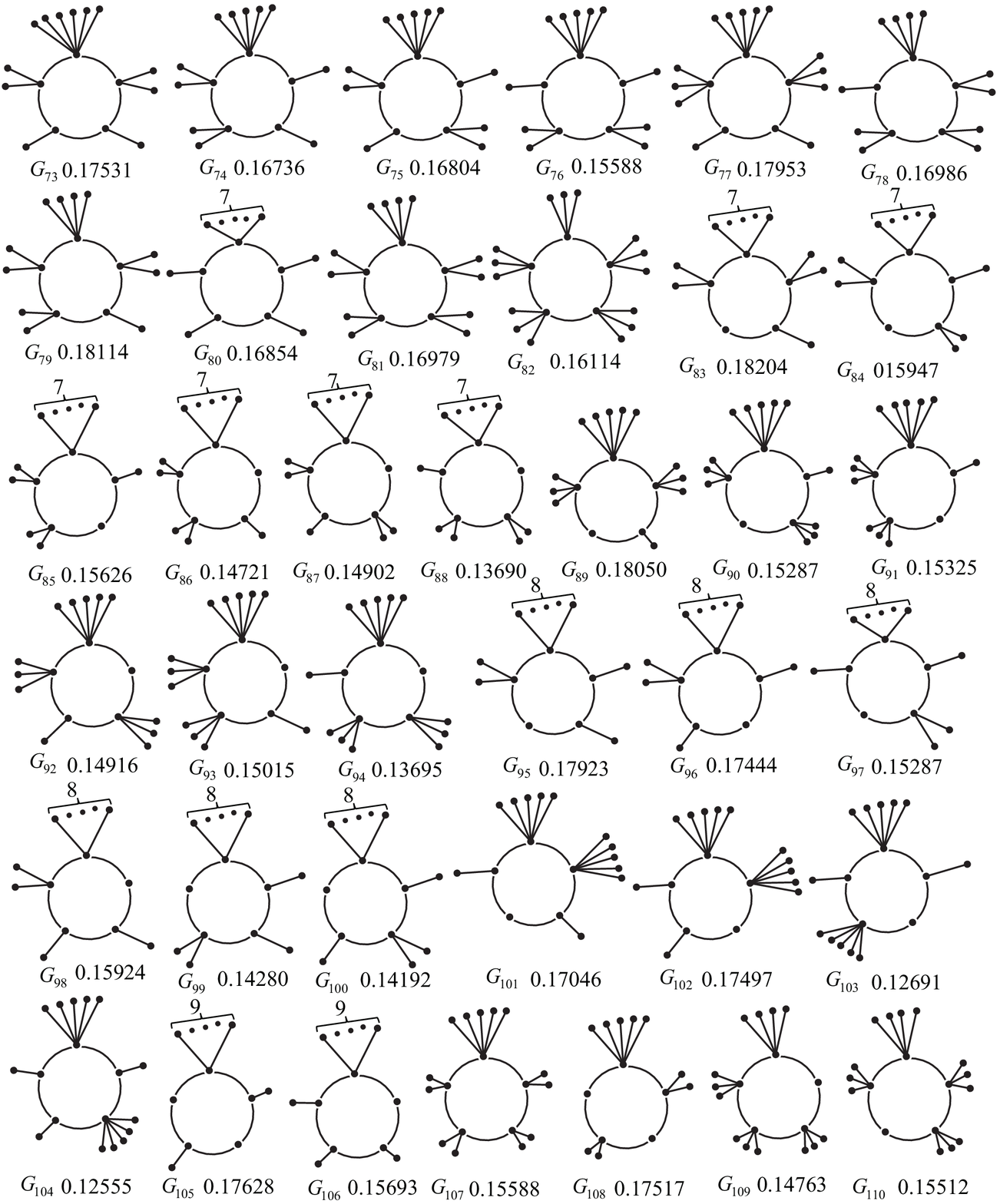}
  \caption{Unicyclic graphs $G_{i}$ and $\lambda_{2}(G_{i})$, $73\leq i\leq110$.}\label{Fig.11.}
\end{figure}
\\{ \bf Subcase 2.2.} If there is exactly one $n_1'$ such that $n_1'=2$, and $n_2'\geq4$, then $n_4'\geq6$ since $n\geq21$. Hence Lemmas \ref{le:5} and \ref{le:2} imply that $\lambda_{2}(U)\leq\max\{\lambda_{2}(G_i)\mid 89\leq i\leq94\}\doteq0.18050<1-\frac{\sqrt{6}}{3}$,
where $G_{i}$ ($89\leq i\leq94$) are shown in Figure \ref{Fig.11.}.
\\{ \bf Subcase 2.3.} If there are exactly two $n_1'$ and $n_2'$ in $N$ such that $n_1'=n_2'=2$, and $3\leq n_3'\leq7$, then $n_4'\geq9$ since $n\geq21$. Hence Lemmas \ref{le:5} and \ref{le:2} imply that $\lambda_{2}(U)\leq\max\{\lambda_{2}(G_i)\mid 95\leq i\leq100\}\doteq0.17923<1-\frac{\sqrt{6}}{3}$,
where $G_{i}$ ($95\leq i\leq100$) are shown in Figure \ref{Fig.11.}.
\\{ \bf Subcase 2.4.} If there are exactly two $n_1'$ and $n_2'$ in $N$ such that $n_1'=n_2'=2$, and $8\leq n_3'\leq n_4$. Hence Lemmas \ref{le:5} and \ref{le:2} imply that $\lambda_{2}(U)\leq\max\{\lambda_{2}(G_i)\mid 101\leq i\leq104\}\doteq0.17497<1-\frac{\sqrt{6}}{3}$, where $G_{i}$ ($101\leq i\leq104$) are shown in Figure \ref{Fig.11.}.
\\{ \bf Subcase 2.5.} If there are exactly three $n_1'$, $n_2'$ and $n_3$ such that $n_1'=n_2'=n_3'=2$, then $n_4'\geq14$ since $n\geq21$.
Hence Lemmas \ref{le:5} and \ref{le:2} imply that $\lambda_{2}(U)\leq\max\{\lambda_{2}(G_i)\mid 105\leq i\leq106\}\doteq0.17628<1-\frac{\sqrt{6}}{3}$, where $G_{i}$ ($105\leq i\leq106$) are shown in Figure \ref{Fig.11.}.
\\{ \bf Subcase 2.6.} If $3\leq n_1'\leq n_2'\leq n_3'\leq n_4'$, and there is at least one $n_1'=3$, then $n_4'\geq6$ since $n\geq21$. Hence Lemmas \ref{le:5} and \ref{le:2} imply that $\lambda_{2}(U)\leq\max\{\lambda_{2}(G_i)\mid 107\leq i\leq108\}\doteq0.17517<1-\frac{\sqrt{6}}{3}$, where $G_{i}$ ($107\leq i\leq108$) are shown in Figure \ref{Fig.11.}.
\\{ \bf Subcase 2.7.} If $4\leq n_1'\leq n_2'\leq n_3'\leq n_4'$, then $n_4'\geq5$ since $n\geq21$. Hence Lemmas \ref{le:5} and \ref{le:2} imply that $\lambda_{2}(U)\leq\max\{\lambda_{2}(G_i)\mid 109\leq i\leq110\}\doteq0.15512<1-\frac{\sqrt{6}}{3}$, where $G_{i}$ ($109\leq i\leq110$) are shown in Figure \ref{Fig.11.}.
\\{ \bf Case 3.} $|N|=3$. We assume that $n_1',n_2',n_3'\in\{n_1,n_2,n_3,n_4,n_5\}$ and $2\leq n_1'\leq n_2'\leq n_3'$.
\\{ \bf Subcase 3.1.} $C_5 (T_1,T_2,T_3,T_4,T_5)\cong C_5(T_2',T_3',T_1',S_1,S_1)$.
\\{ \bf Subcase 3.1.1.} If there are exactly two $n_1'$ and $n_2'$ such that $n_1'=n_2'=2$,
then $n_3'\geq15$ since $n\geq21$. Since $C_5 (T_1,T_2,T_3,T_4, T_5)\cong C_5 (S_2,T_3',S_2,S_1,S_1)$.
Let $d$ be the length of the longest path from $v_3'$ to the pendent vertex in $T_3'$. In order to make
$\lambda_{2}(C_5 (S_2,T_3',S_2,S_1,S_1))\geq1-\frac{\sqrt{6}}{3}$, then $d\leq2$. If $d\geq3$, then Lemma \ref{le:2} implies that
$\lambda_{2}(C_5 (S_2,T_3',S_2,S_1,S_1))\leq\lambda_{2}(H)\doteq0.14988<1-\frac{\sqrt{6}}{3}$, where $H$ is shown in Figure \ref{Fig.13.}.
Thus $C_5 (T_1,T_2,T_3,T_4, T_5)$ is the unicyclic graph $H_1$ as shown in Figure \ref{Fig.13.}, where $l_i$ ($0\leq i\leq t$) are nonnegative integers
and $l_0+2l_1+3l_2+\cdots+(t+1)l_t+7=n\geq21$.
If $l_3+l_4+\cdots+l_t\geq1$, then Lemma \ref{le:2} implies that
$\lambda_{2}(U)\leq\lambda_{2}(H_0)\doteq0.15520<1-\frac{\sqrt{6}}{3}$, where $H_0$ is shown in Figure \ref{Fig.13.}. Thus $l_3+l_4+\cdots+l_t=0$. If $l_2\geq1$, then Lemma \ref{le:2} implies that
$\lambda_{2}(U)\leq\lambda_{2}(H_2)\doteq0.16895<1-\frac{\sqrt{6}}{3}$, where $H_2$ is shown in Figure \ref{Fig.13.}. Thus $l_2=0$.
Since $\lambda_{2}(S_5(2,19,2,1,1))=0.18309<1-\frac{\sqrt{6}}{3}$, If $l_0+2l_1\geq18$, then Lemmas \ref{le:5} and \ref{le:2}
imply that $\lambda_{2}(U)\leq\lambda_{2}(S_5(2,19,2,1,1))=0.18309<1-\frac{\sqrt{6}}{3}$. Thus $14\leq l_0+2l_1\leq17$. If $l_1\geq2$,
then Lemmas \ref{le:3} and \ref{le:2} imply that
$\lambda_{2}(U)\leq\lambda_{2}(H_3)\doteq0.18313<1-\frac{\sqrt{6}}{3}$, where $H_3$ is shown in Figure \ref{Fig.13.}. Thus $l_1\leq1$.
\\{ \bf Subcase 3.1.1.1.} $l_1=1$, then $12\leq l_0\leq15$.
\\If $12\leq l_0\leq13$, then Lemma \ref{le:2} implies that $\lambda_{2}(U)\geq\lambda_{2}(H_5)\doteq0.18512>1-\frac{\sqrt{6}}{3}$, where $H_5$ is shown in Figure \ref{Fig.13.}. Otherwise, if $15\geq l_0\geq14$, then Lemma \ref{le:2} implies that $\lambda_{2}(U)\leq\lambda_{2}(H_6)\doteq0.18311<1-\frac{\sqrt{6}}{3}$, where $H_6$ is shown in Figure \ref{Fig.13.}.
\\{ \bf Subcase 3.1.1.2.} $l_1=0$, then $14\leq l_0\leq17$.
\\By Lemma \ref{le:2}, we have $\lambda_{2}(U)\geq\lambda_{2}(S_6(2,18,2,1,1))\doteq0.18523>1-\frac{\sqrt{6}}{3}$
\begin{figure}[htbp]
  \centering
  \includegraphics[scale=0.55]{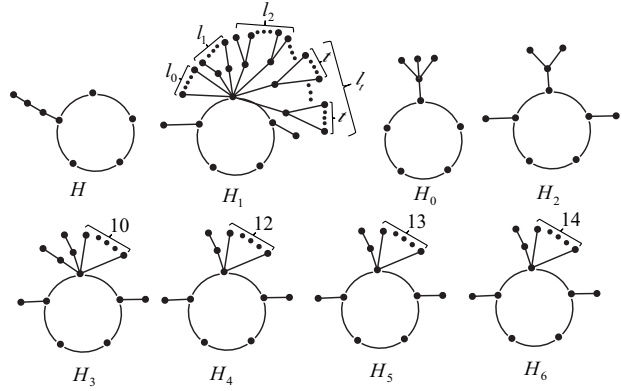}
  \caption{Unicyclic graphs $H$ and $H_i$, $0\leq i\leq6$.}\label{Fig.13.}
\end{figure}
\\{ \bf Subcase 3.1.2.} If there is exactly one $n_1'$ such that $n_1'=2$, and $3\leq n_2'\leq4$, then $n_3'\geq13$ since $n\geq21$.
Since $C_5 (T_1,T_2,T_3,T_4, T_5)\cong C_5 (T_2',T_3',S_2,S_1,S_1)$, then Lemmas \ref{le:5} and \ref{le:2} imply that
$\lambda_{2}(C_5(T_1,T_2,T_3,T_4, T_5))\leq\lambda_{2}(G_{111})\doteq0.18285<1-\frac{\sqrt{6}}{3}$,
where $G_{111}$ is shown in Figure \ref{Fig.12.}.
\\{ \bf Subcase 3.1.3.} If there is exactly one $n_1'$ such that $n_1'=2$,
and $n_3'\geq n_2'\geq5$, then $n_3'\geq8$ since $n\geq21$. Since $C_5 (T_1,T_2,T_3,T_4, T_5)\cong C_5 (T_2',T_3',S_2,S_1,S_1)$,
then Lemmas \ref{le:5} and \ref{le:2} imply that $\lambda_{2}(U)\leq\lambda_{2}(G_{112})\doteq0.17527<1-\frac{\sqrt{6}}{3}$,
where $G_{112}$ is shown in Figure \ref{Fig.12.}.
\\{ \bf Subcase 3.1.4.} If $n_1'=3$, $3\leq n_2'\leq4$, then $n_3'\geq12$ since $n\geq21$. Since
$C_5(T_1,T_2,T_3,T_4, T_5)$ $\cong C_5(T_2',T_3',T_1',S_1,S_1)$,
then Lemmas \ref{le:5} and \ref{le:2} imply that $\lambda_{2}(U)\leq\lambda_{2}(G_{113})\doteq0.18023<1-\frac{\sqrt{6}}{3}$,
where $G_{113}$ is shown in Figure \ref{Fig.12.}.
\\{ \bf Subcase 3.1.5.} If $n_1'=3$, $n_3\geq n_2'\geq5$, then $n_3'\geq8$ since $n\geq21$. Since $C_5(T_1,T_2,T_3,T_4, T_5)$ $\cong C_5(T_2',T_3',T_1',S_1,S_1)$,
then Lemmas \ref{le:5} and \ref{le:2} imply that $\lambda_{2}(U)\leq\lambda_{2}(G_{114})\doteq0.17728<1-\frac{\sqrt{6}}{3}$,
where $G_{114}$ is shown in Figure \ref{Fig.12.}.
\\{ \bf Subcase 3.1.6.} If $n_1'=4$, $4\leq n_2'\leq5$, then $n_3'\geq10$ since $n\geq21$. Since $C_5(T_1,T_2,T_3,T_4, T_5)$ $\cong C_5(T_2',T_3',T_1',S_1,S_1)$,
then Lemmas \ref{le:5} and \ref{le:2} imply that $\lambda_{2}(U)\leq\lambda_{2}(G_{115})\doteq0.17643<1-\frac{\sqrt{6}}{3}$,
where $G_{115}$ is shown in Figure \ref{Fig.12.}.
\\{ \bf Subcase 3.1.7.} If $n_1'=4$, $n_3'\geq n_2'\geq6$. Since $C_5 (T_1,T_2,T_3,T_4, T_5)\cong C_5 (T_2',T_3',T_1',S_1,S_1)$,
then Lemmas \ref{le:5} and \ref{le:2} imply that $\lambda_{2}(U)\leq\lambda_{2}(G_{116})\doteq0.15218<1-\frac{\sqrt{6}}{3}$,
where $G_{116}$ is shown in Figure \ref{Fig.12.}.
\\{ \bf Subcase 3.1.8.} If $n_3'\geq n_2'\geq n_1\geq5$, then $n_3'\geq7$ since $n\geq21$. Since $C_5(T_1,T_2,T_3,T_4, T_5)$ $\cong C_5(T_2',T_3',T_1',S_1,S_1)$,
then Lemmas \ref{le:5} and \ref{le:2} imply that $\lambda_{2}(U)\leq\lambda_{2}(G_{117})\doteq0.15201<1-\frac{\sqrt{6}}{3}$,
where $G_{117}$ is shown in Figure \ref{Fig.12.}.
\begin{figure}[htbp]
  \centering
  \includegraphics[scale=0.55]{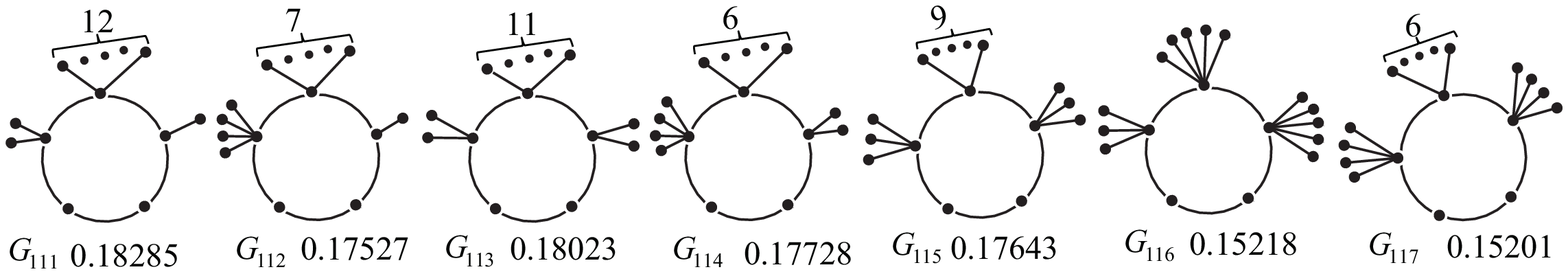}
  \caption{Unicyclic graphs $G_{i}$ and $\lambda_{2}(G_{i})$, $111\leq i\leq117$.}\label{Fig.12.}
\end{figure}
\\{ \bf Subcase 3.2.} $C_5 (T_1,T_2,T_3,T_4,T_5)\ncong C_5(T_2',T_3',T_1',S_1,S_1)$.
\begin{figure}[htbp]
  \centering
  \includegraphics[scale=0.55]{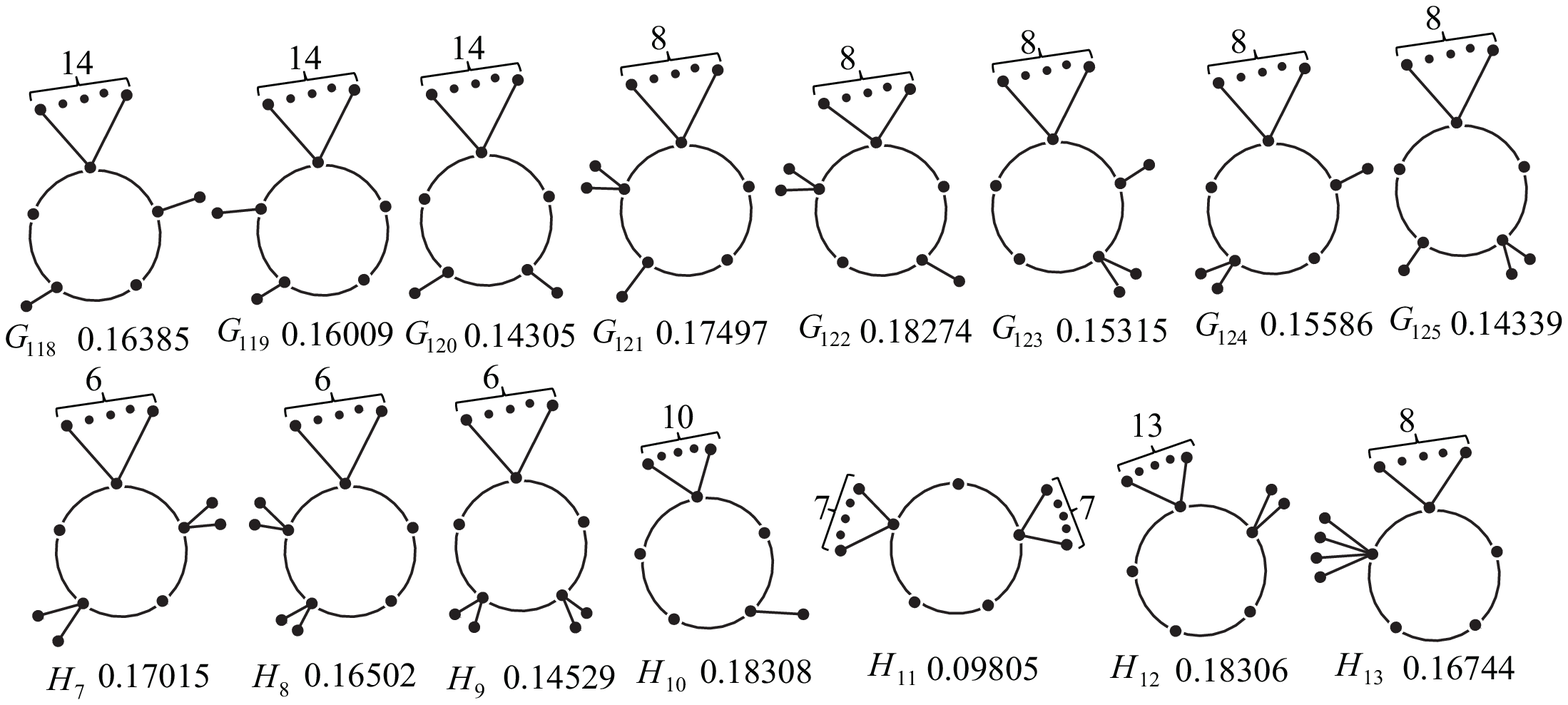}
  \caption{Unicyclic graphs $G_{i}$, $H_j$, and $\lambda_{2}(G_{i})$, $\lambda_{2}(H_{j})$, $118\leq i\leq125$, $7\leq j\leq13$.}\label{Fig.14.}
\end{figure}
\\{ \bf Subcase 3.2.1.} If there are exactly two $n_1'$ and $n_2'$ such that $n_1'=n_2'=2$,
then $n_3'\geq15$ since $n\geq21$. Since $C_5 (T_1,T_2,T_3,T_4, T_5)\ncong C_5 (S_2,T_3',S_2,S_1,S_1)$,
then Lemmas \ref{le:5} and \ref{le:2} imply that $\lambda_{2}(U)\leq\max\{\lambda_{2}(G_i)\mid 118\leq i\leq120\}\doteq0.16385<1-\frac{\sqrt{6}}{3}$,
where $G_{i}$ ($118\leq i\leq120$) are shown in Figure \ref{Fig.14.}.
\\{ \bf Subcase 3.2.2.} If there is exactly one $n_1'$ such that $n_1'=2$, and $n_3'\geq n_2'\geq3$, then $n_3'\geq9$ since $n\geq21$.
Since $C_5 (T_1,T_2,T_3,T_4, T_5)\ncong C_5 (T_2',T_3',S_2,S_1,S_1)$, then Lemmas \ref{le:5} and \ref{le:2} imply that
$\lambda_{2}(C_5(T_1,T_2,T_3,T_4, T_5))\leq\max\{\lambda_{2}(G_i)\mid 121\leq i\leq125\}\doteq0.18274<1-\frac{\sqrt{6}}{3}$,
where $G_{i}$ ($121\leq i\leq125$) are shown in Figure \ref{Fig.14.}.
\\{ \bf Subcase 3.2.3.} If $n_3'\geq n_2'\geq n_1'\geq3$, then $n_3'\geq7$ since $n\geq21$. Since $C_5 (T_1,T_2,T_3,T_4, T_5)\ncong C_5 (T_2',T_3',S_2,S_1,S_1)$, then Lemmas \ref{le:5} and \ref{le:2} imply that $\lambda_{2}(U)\leq\max\{\lambda_{2}(H_j)\mid 7\leq j\leq9\}\doteq0.17015<1-\frac{\sqrt{6}}{3}$,
where $H_{j}$ ($7\leq j\leq9$) are shown in Figure \ref{Fig.14.}.
\\{ \bf Case 4.} $|N|=2$. We assume that $n_1',n_2'\in\{n_1,n_2,n_3,n_4,n_5\}$, and $2\leq n_1'\leq n_2'$.
\\{ \bf Subcase 4.1.} $C_5 (T_1,T_2,T_3,T_4, T_5)\ncong C_5 (T_1',T_2',S_1,S_1,S_1)$.
\\{ \bf Subcase 4.1.1.} If $2\leq n_1'\leq7$, then $n_2'\geq11$ since $n\geq21$. Hence Lemmas \ref{le:5} and \ref{le:2} imply that $\lambda_{2}(U)\leq\lambda_{2}(H_{10})\doteq0.18308<1-\frac{\sqrt{6}}{3}$,
where $H_{10}$ is shown in Figure \ref{Fig.14.}.
\\{ \bf Subcase 4.1.2.} If $n_2'\geq n_1'\geq8$, then Lemmas \ref{le:5} and \ref{le:2} imply that $\lambda_{2}(U)\leq\lambda_{2}(H_{11})\doteq0.09805<1-\frac{\sqrt{6}}{3}$,
where $H_{11}$ is shown in Figure \ref{Fig.14.}.
\\{ \bf Subcase 4.2.}  $C_5 (T_1,T_2,T_3,T_4, T_5)\cong C_5 (T_1',T_2',S_1,S_1,S_1)$.
\\{ \bf Subcase 4.2.1.} If $3\leq n_1'\leq4$, then $n_2'\geq14$ since $n\geq21$. Hence Lemmas \ref{le:5} and \ref{le:2} imply that $\lambda_{2}(U)\leq\lambda_{2}(H_{12})\doteq0.18306<1-\frac{\sqrt{6}}{3}$,
where $H_{12}$ is shown in Figure \ref{Fig.14.}.
\\{ \bf Subcase 4.2.2.} If $n_2'\geq n_1'\geq5$,
then $n_2'\geq9$ since $n\geq21$. Hence \ref{le:5} and \ref{le:2} imply that $\lambda_{2}(U)\leq\lambda_{2}(H_{13})\doteq0.16744<1-\frac{\sqrt{6}}{3}$,
where $H_{13}$ is shown in Figure \ref{Fig.14.}.
\\{ \bf Subcase 4.2.3.} If $n_1'=2$, then $n_2'\geq16$ since $n\geq21$.
Let $d$ be the length of the longest path from $v_2'$ to the pendent vertex in $T_2'$.
In order to make
$\lambda_{2}(C_5 (S_2,T_2',S_1,S_1,S_1))\geq1-\frac{\sqrt{6}}{3}$, then $d\leq2$. If $d\geq3$, then Lemma \ref{le:2} imply that
$\lambda_{2}(C_5 (S_2,T_3',S_1,S_1,S_1))\leq\lambda_{2}(H)\doteq0.14988<1-\frac{\sqrt{6}}{3}$, where $H$ is shown in Figure \ref{Fig.13.}.
Thus $C_5 (S_2,T_2',S_1,S_1,S_1)$ is the unicyclic graph $H_{24}$ as shown in Figure \ref{Fig.16.}, where $l_i$ ($0\leq i\leq t$) are nonnegative integers
and $l_0+2l_1+3l_2+\cdots+(t+1)l_t+6=n\geq21$.
If $l_3+l_4+\cdots+l_t\geq1$, then Lemma \ref{le:2} implies that
$\lambda_{2}(U)\leq\lambda_{2}(H_0)\doteq0.15520<1-\frac{\sqrt{6}}{3}$, where $H_0$ is shown in Figure \ref{Fig.13.}. Thus $l_3+l_4+\cdots+l_t=0$. If $l_2\geq1$, then Lemma \ref{le:2} implies that
$\lambda_{2}(U)\leq\lambda_{2}(H_{25})\doteq0.17497<1-\frac{\sqrt{6}}{3}$, where $H_{25}$ is shown in Figure \ref{Fig.16.}. Thus $l_2=0$. Therefore
$U$ is the unicyclic graph $G_0$, where $G_0$ is shown in Figure \ref{Fig.16.}, $l_0$ and $l_1$ are nonnegative integers, and $l_0+2l_1+6=n\geq21$.
Since $\lambda_{2}(S_5(2,31,1,1,1))=0.18329<1-\frac{\sqrt{6}}{3}$, If $l_0+2l_1\geq30$, then Lemmas \ref{le:5} and \ref{le:2}
imply that $\lambda_{2}(U)\leq\lambda_{2}(S_5(2,31,1,1,1))=0.18329<1-\frac{\sqrt{6}}{3}$. Thus $15\leq l_0+2l_1\leq29$. If $l_1\geq8$,
then Lemmas \ref{le:3} and \ref{le:2} imply that
$\lambda_{2}(U)\leq\lambda_{2}(H_{26})\doteq0.18215<1-\frac{\sqrt{6}}{3}$, where $H_{26}$ is shown in Figure \ref{Fig.16.}. Thus $l_1\leq7$.
\\{ \bf Subcase 4.2.3.1.} $l_1=7$, then $1\leq l_0\leq15$.
\begin{figure}[htbp]
  \centering
  \includegraphics[scale=0.55]{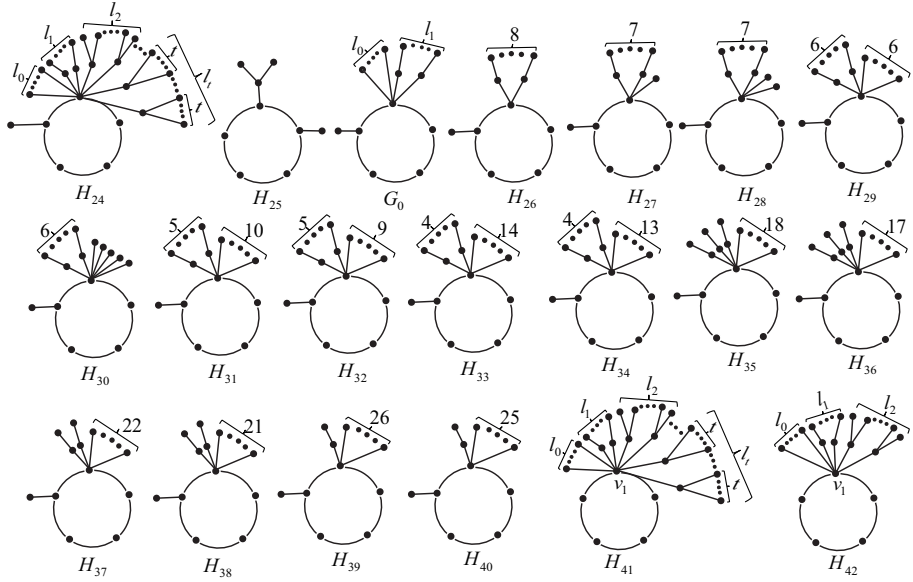}
  \caption{Unicyclic graphs $G_0$ and $H_i$, $24\leq i\leq42$.}\label{Fig.16.}
\end{figure}
\\ If $l_0=1$, then $\lambda_{2}(U)=\lambda_{2}(H_{27})\doteq0.18395>1-\frac{\sqrt{6}}{3}$, where $H_{27}$ is shown in Figure \ref{Fig.16.}. Otherwise, if $2\leq l_0\leq15$, then Lemma \ref{le:2} implies  that $\lambda_{2}(U)\leq\lambda_{2}(H_{28})\doteq0.18331<1-\frac{\sqrt{6}}{3}$, where $H_{28}$ is shown in Figure \ref{Fig.16.}.
\\{ \bf Subcase 4.2.3.2.} $l_1=6$, then $3\leq l_0\leq17$.
\\ If $6\leq l_0\leq17$, then Lemma \ref{le:2} implies  that $\lambda_{2}(U)\leq\lambda_{2}(H_{29})\doteq0.18331<1-\frac{\sqrt{6}}{3}$, where $H_{29}$ is shown in Figure \ref{Fig.16.}. Otherwise, if $3\leq l_0\leq5$, then Lemma \ref{le:2} implies  that $\lambda_{2}(U)\geq\lambda_{2}(H_{30})\doteq0.18396>1-\frac{\sqrt{6}}{3}$, where $H_{30}$ is shown in Figure \ref{Fig.16.}.
\\{ \bf Subcase 4.2.3.3.} $l_1=5$, then $5\leq l_0\leq19$.
\\ If $10\leq l_0\leq19$, then Lemma \ref{le:2} implies  that $\lambda_{2}(U)\leq\lambda_{2}(H_{31})\doteq0.18331<1-\frac{\sqrt{6}}{3}$, where $H_{31}$ is shown in Figure \ref{Fig.16.}. Otherwise, if $5\leq l_0\leq9$, then Lemma \ref{le:2} implies  that $\lambda_{2}(U)\geq\lambda_{2}(H_{32})\doteq0.18397>1-\frac{\sqrt{6}}{3}$, where $H_{32}$ is shown in Figure \ref{Fig.16.}.
\\{ \bf Subcase 4.2.3.4.} $l_1=4$, then $7\leq l_0\leq21$.
\\ If $14\leq l_0\leq21$, then Lemma \ref{le:2} implies  that $\lambda_{2}(U)\leq\lambda_{2}(H_{33})\doteq0.18330<1-\frac{\sqrt{6}}{3}$, where $H_{33}$ is shown in Figure \ref{Fig.16.}. Otherwise, if $7\leq l_0\leq13$, then Lemma \ref{le:2} implies  that $\lambda_{2}(U)\geq\lambda_{2}(H_{34})\doteq0.18398>1-\frac{\sqrt{6}}{3}$, where $H_{34}$ is shown in Figure \ref{Fig.16.}.
\\{ \bf Subcase 4.2.3.5.} $l_1=3$, then $9\leq l_0\leq23$.
\\ If $18\leq l_0\leq23$, then Lemma \ref{le:2} implies  that $\lambda_{2}(U)\leq\lambda_{2}(H_{35})\doteq0.18330<1-\frac{\sqrt{6}}{3}$, where $H_{35}$ is shown in Figure \ref{Fig.16.}. Otherwise, if $9\leq l_0\leq17$, then Lemma \ref{le:2} implies  that $\lambda_{2}(U)\geq\lambda_{2}(H_{36})\doteq0.18399>1-\frac{\sqrt{6}}{3}$, where $H_{36}$ is shown in Figure \ref{Fig.16.}.
\\{ \bf Subcase 4.2.3.6.} $l_1=2$, then $11\leq l_0\leq25$.
\\ If $22\leq l_0\leq25$, then Lemma \ref{le:2} implies  that $\lambda_{2}(U)\leq\lambda_{2}(H_{37})\doteq0.18329<1-\frac{\sqrt{6}}{3}$, where $H_{37}$ is shown in Figure \ref{Fig.16.}. Otherwise, if $11\leq l_0\leq21$, then Lemma \ref{le:2} implies  that $\lambda_{2}(U)\geq\lambda_{2}(H_{38})\doteq0.18400>1-\frac{\sqrt{6}}{3}$, where $H_{38}$ is shown in Figure \ref{Fig.16.}.
\\{ \bf Subcase 4.2.3.7.} $l_1=1$, then $13\leq l_0\leq27$.
\\ If $26\leq l_0\leq27$, then Lemma \ref{le:2} implies that $\lambda_{2}(U)\leq\lambda_{2}(H_{39})\doteq0.18329<1-\frac{\sqrt{6}}{3}$, where $H_{39}$ is shown in Figure \ref{Fig.16.}. Otherwise, if $13\leq l_0\leq25$, then Lemma \ref{le:2} implies  that $\lambda_{2}(U)\geq\lambda_{2}(H_{40})\doteq0.18401>1-\frac{\sqrt{6}}{3}$, where $H_{40}$ is shown in Figure \ref{Fig.16.}.
\\{ \bf Subcase 4.2.3.8.} $l_1=0$, then $15\leq l_0\leq29$.
\\ By Lemma \ref{le:2}, we have $\lambda_{2}(U)\geq\lambda_{2}(S_5(2,30,1,1,1))\doteq0.18402>1-\frac{\sqrt{6}}{3}$.
\\{ \bf Case 5.} $|N|=1$. Without loss of generality, we assume that $n_1\geq2$. That is $C_5(T_1,T_2,T_3,$ $T_4,T_5)=C_5(T_1,S_1,S_1,S_1,S_1)$.
Let $d$ be the length of the longest path from $v_1$ to the pendent vertex in $T_1$.
In order to make
$\lambda_{2}(C_5(T_1,S_1,S_1,S_1,S_1))\geq1-\frac{\sqrt{6}}{3}$, then $d\leq2$. If $d\geq3$, then Lemma \ref{le:2} implies  that
$\lambda_{2}(C_5(T_1,S_1,S_1,S_1,S_1))\leq\lambda_{2}(H)\doteq0.14988<1-\frac{\sqrt{6}}{3}$, where $H$ is shown in Figure \ref{Fig.13.}.
Thus $C_5(T_1,S_1,S_1,S_1,S_1)$ is the unicyclic graph $H_{41}$ as shown in Figure \ref{Fig.16.}, where $l_i$ ($0\leq i\leq t$) are nonnegative integers
and $l_0+2l_1+3l_2+\cdots+(t+1)l_t+5=n\geq21$. If $l_3+l_4+\cdots+l_t\geq1$, then Lemma \ref{le:2} implies that
$\lambda_{2}(U)\leq\lambda_{2}(H_0)\doteq0.15520<1-\frac{\sqrt{6}}{3}$, where $H_0$ is shown in Figure \ref{Fig.13.}. Thus $l_3+l_4+\cdots+l_t=0$.
\\Therefore, in order to make
$\lambda_{2}(C_5(T_1,S_1,S_1,S_1,S_1))\geq1-\frac{\sqrt{6}}{3}$, $C_5(T_1,S_1,S_1,S_1,S_1)$ is the unicyclic graph $H_{42}$ as shown in Figure \ref{Fig.16.}, where $l_i$ ($0\leq i\leq 2$) are nonnegative integers
and $l_0+2l_1+3l_2+5=n\geq21$. Recall that $v_1$ is a vertex degree $l_0+l_1+l_2+2$ in $H_{42}$. Note that the eigenvalues of
$\mathcal{L}_{v_1}(H_{42})$ are $\underbrace{1-\frac{\sqrt{6}}{3},\cdots,1-\frac{\sqrt{6}}{3}}_{l_{2}}$, $\frac{3}{4}-\frac{\sqrt{5}}{4}$, $\underbrace{1-\frac{\sqrt{2}}{2},\cdots,1-\frac{\sqrt{2}}{2}}_{l_{1}}$, $\frac{5}{4}-\frac{\sqrt{5}}{4}$, $\underbrace{1,\cdots,1}_{l_{0}+l_{2}}$, $\frac{3}{4}+\frac{\sqrt{5}}{4}$, $\underbrace{1+\frac{\sqrt{2}}{2},\cdots,1+\frac{\sqrt{2}}{2}}_{l_{1}}$, $\frac{5}{4}+\frac{\sqrt{5}}{4}$, $\underbrace{1+\frac{\sqrt{6}}{3},\cdots,1+\frac{\sqrt{6}}{3}}_{l_{2}}$.
\\If $l_2\geq2$, then Lemma \ref{le:1} implies that $\lambda_{2}(H_{42})=1-\frac{\sqrt{6}}{3}$.
Analogously, when $l_2=1$, $\lambda_{2}(H_{42})\geq1-\frac{\sqrt{6}}{3}$, and when $l_2=0$, $\lambda_{2}(H_{42})\geq\frac{3}{4}-\frac{\sqrt{5}}{4}>1-\frac{\sqrt{6}}{3}$.
\end{proof}
\section{ All unicyclic graphs of girth $4$ with $\lambda_{2}\geq1-\frac{\sqrt{6}}{3}$}

In this section, we determine all unicyclic graphs in $\mathcal{U}_n^4$ ($n\geq21$)with $\lambda_{2}\geq1-\frac{\sqrt{6}}{3}$.
Before introducing our results we recall some notation. Note that if $U\in\mathcal{U}_n^4$, then $U $consists
of the cycle $C_4=v_1v_2v_3v_4v_1$ and four trees $T_1$, $T_2$, $T_3$ and $T_4$ attached at the vertices $v_1$, $v_2$,
 $v_3$ and $v_4$, respectively.
 \begin{figure}[htbp]
  \centering
  \includegraphics[scale=0.6]{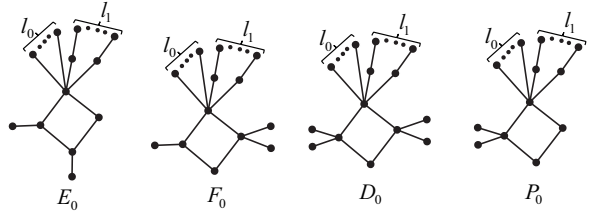}
  \caption{Unicyclic graphs $E_{0}$, $F_{0}$, $D_0$ and $P_0$.}\label{Fig.0.}
\end{figure}
\noindent\begin{theorem}\label{th:2} Suppose that $U\in\mathcal{U}_n^4$ with $n\geq21$.
Then $\lambda_{2}(U)\geq1-\frac{\sqrt{6}}{3}$ if and only if one of the following items holds:

(i) $U$ is isomorphic to $S_4(2,2,2,k)$ with $15\leq k\leq17$.

(ii) $U$ is isomorphic to $H_{65}$, where $H_{65}$ is shown in Figure \ref{Fig.20.}.

(iii) $U$ is the unicyclic graph $E_0$, where $E_0$ is shown in Figure \ref{Fig.0.}, $l_0$ and $l_1$ are nonnegative integers,
$15\leq l_0+2l_1\leq25$, $0\leq l_1\leq5$ and one of the following items holds:

(1) $l_1=5$ and $l_0=5$.

(2) $l_1=4$ and $7\leq l_0\leq9$.

(3) $l_1=3$ and $9\leq l_0\leq13$.

(4) $l_1=2$ and $11\leq l_0\leq17$.

(5) $l_1=1$ and $13\leq l_0\leq21$.

(6) $l_1=0$ and $15\leq l_0\leq25$.

(iv) $U$ is isomorphic to $H_{94}$, where $H_{94}$ as shown in  Figure \ref{Fig.23.}, and $l_i$ ($0\leq i\leq 2$) are nonnegative integers
and $l_0+2l_1+3l_2+6=n\geq21$.

(v) $U$ is the unicyclic graph $F_0$, where $F_0$ is shown in Figure \ref{Fig.0.}, $l_0$ and $l_1$ are nonnegative integers,
$14\leq l_0+2l_1\leq24$, $0\leq l_1\leq5$ and one of the following items holds:

(1) $l_1=5$ and $l_0=4$.

(2) $l_1=4$ and $6\leq l_0\leq8$.

(3) $l_1=3$ and $8\leq l_0\leq12$.

(4) $l_1=2$ and $10\leq l_0\leq16$.

(5) $l_1=1$ and $12\leq l_0\leq20$.

(6) $l_1=0$ and $14\leq l_0\leq24$.

(vi) $U$ is the unicyclic graph $D_0$, where $D_0$ is shown in Figure \ref{Fig.0.}, $l_0$ and $l_1$ are nonnegative integers,
$13\leq l_0+2l_1\leq16$, $0\leq l_1\leq1$ and one of the following items holds:

(1) $l_1=1$ and $11\leq l_0\leq12$.

(2) $l_1=0$ and $13\leq l_0\leq16$.

(vii) $U$ is isomorphic to $A_8$, where $A_8$ as shown in  Figure \ref{Fig.26.}, and $l_i$ ($0\leq i\leq 2$) are nonnegative integers
and $l_0+2l_1+3l_2+5=n\geq21$.

(viii) $U$ is isomorphic to $A_{11}$, where $A_{11}$ as shown in \ref{Fig.26.}, and $l_i$ ($0\leq i\leq 2$) are nonnegative integers
and $l_0+2l_1+3l_2+5=n\geq21$.

(ix) $U$ is the unicyclic graph $P_0$, where $P_0$ is shown in Figure \ref{Fig.0.}, $l_0$ and $l_1$ are nonnegative integers,
$15\leq l_0+2l_1\leq40$, $0\leq l_1\leq10$ and one of the following items holds:

(1) $l_1=10$ and $l_0=0$.

(2) $l_1=9$ and $0\leq l_0\leq4$.

(3) $l_1=8$, then $0\leq l_0\leq8$.

(4) $l_1=7$ and $1\leq l_0\leq12$.

(5) $l_1=6$ and $3\leq l_0\leq16$.

(6) $l_1=5$ and $5\leq l_0\leq20$.

(7) $l_1=4$ and $7\leq l_0\leq24$.

(8) $l_1=3$ and $9\leq l_0\leq28$.

(9) $l_1=2$ and $11\leq l_0\leq32$.

(10) $l_1=1$ and $13\leq l_0\leq26$.

(11) $l_1=0$ and  $15\leq l_0\leq40$.

(x) $U$ is isomorphic to $A_{35}$, where $A_{35}$ as shown in  Figure \ref{Fig.28.}, and $l_i$ ($0\leq i\leq 2$) are nonnegative integers
and $l_0+2l_1+3l_2+4=n\geq21$.

Furthermore, the equality holds if and only if $U$ is isomorphic to $H_{65}$, or $S_4(2,2,2,17)$, or $H_{82}$,
or $H_{84}$, or $H_{86}$, or $H_{88}$, or $H_{90}$,  where $H_{82}$, $H_{84}$, $H_{86}$, $H_{88}$ and $H_{90}$ are shown in  Figure \ref{Fig.22.}, or $S_4(2,2,26,1)$, or $H_{94}$ with $l_2\geq1$, or $H_{99}$, or $H_{101}$, or $H_{103}$, or $H_{105}$, or $H_{107}$, where $H_{99}$, $H_{101}$, $H_{103}$, $H_{105}$ and $H_{107}$ are shown in Figure \ref{Fig.24.}, or $S_4(3,25,2,1)$, or $A_{4}$, where $A_{4}$ is shown in Figure \ref{Fig.25.}, or $S_4(3,17,3,1)$, or $A_{8}$ with $l_2\geq1$, or $A_{11}$ with $l_2\geq2$, or $A_{13}$, or $A_{15}$,
or $A_{17}$, or $A_{19}$, or $A_{21}$, or $A_{23}$, or $A_{25}$, or $A_{27}$, or $A_{29}$, or $A_{31}$, where $A_{13}$, $A_{15}$, $A_{17}$, $A_{19}$, $A_{21}$, $A_{23}$, $A_{25}$, $A_{27}$, $A_{29}$ and $A_{31}$ are shown in Figure \ref{Fig.27.}, or $S_4(3,41,1,1)$, or $A_{35}$ with
$l_2\geq2$
\end{theorem}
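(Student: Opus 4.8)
The plan is to prove Theorem~\ref{th:2} by the same exhaustive case analysis that drove Lemma~\ref{le:6} and Theorem~\ref{th:1}, specialized to girth $4$. I write $U=C_4(T_1,T_2,T_3,T_4)$ with $|V(T_i)|=n_i\geq1$ and $\sum n_i=n\geq21$, and set $N=\{i:n_i\geq2\}$. The whole argument is organized according to $|N|\in\{1,2,3,4\}$, and within each value of $|N|$ according to how the ``mass'' $n-4$ is distributed among the trees. The two workhorses are Lemma~\ref{le:5} (replacing each attached tree by a star can only increase $\lambda_2$, so it suffices to bound the star graphs $S_4(\cdot)$) and Lemma~\ref{le:2} (deleting a pendent vertex can only increase $\lambda_2$, giving a monotone comparison toward a small ``extremal'' graph). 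For each configuration I would exhibit a dominating graph $G$ obtained by these two monotonicity steps and verify numerically that $\lambda_2(G)<1-\tfrac{\sqrt6}{3}$, thereby discarding the configuration; only the finitely many surviving families $E_0,F_0,D_0,P_0$ and the star-like trees $H_{94},A_8,A_{11},A_{35}$, etc., remain.

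\textbf{Reducing the tail structure and pinning down the families.} The first structural reduction, exactly as in Subcases~3.1.1 and~4.2.3 and Case~5 of Theorem~\ref{th:1}, is to show that in any surviving graph every attached tree has depth at most $2$: if some $T_i$ contains a vertex at distance $\geq 3$ from the cycle, then by Lemma~\ref{le:2} we may delete down to a graph $H$ with $\lambda_2(H)<1-\tfrac{\sqrt6}{3}$, a contradiction. This forces each $T_i$ to be a ``generalized star of depth $\le 2$,'' so a surviving $U$ is determined by the parameters $l_0,l_1,l_2$ counting pendent edges, length-$2$ paths attached at the apex, etc. Two further reductions, again via Lemma~\ref{le:2}, eliminate $l_3+\cdots+l_t\geq1$ (comparison with $H_0$) and then, where the theorem's families have no $l_2$ term, eliminate $l_2\ge1$ by comparison with the appropriate graph (the analogues of $H_2,H_{25}$). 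What is left is a finite menu of one- and two-parameter families in $l_0,l_1$, and for each I must determine the exact integer ranges of $(l_0,l_1)$ for which $\lambda_2\ge 1-\tfrac{\sqrt6}{3}$; this is done by sandwiching between two explicit graphs (the boundary graphs $H_{27},H_{28}$-type above and below), matching the itemized ranges in (iii), (v), (vi), (ix).

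\textbf{The spectral computation for the extremal families.} The genuinely exact part---and the heart of the ``Furthermore'' clause---is the eigenvalue analysis of the star-like families $H_{94},A_8,A_{11},A_{35}$ (the girth-$4$ analogues of $H_{42}$). Here I would reuse the $H_{42}$ technique: delete the apex vertex $v_1$, compute the spectrum of the principal submatrix $\mathcal{L}_{v_1}$ explicitly (it block-diagonalizes into $2\times2$ blocks for each attached path, yielding eigenvalues $1-\tfrac{\sqrt6}{3}$ with multiplicity $l_2$, $1-\tfrac{\sqrt2}{2}$ with multiplicity $l_1$, and so on), and then invoke the interlacing Lemma~\ref{le:1}. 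Interlacing forces $\lambda_2(U)\leq$ the second-smallest eigenvalue of $\mathcal{L}_{v_1}$; when $l_2\geq2$ this second-smallest eigenvalue is exactly $1-\tfrac{\sqrt6}{3}$, pinning $\lambda_2=1-\tfrac{\sqrt6}{3}$ and giving the equality cases, while for $l_2\le1$ one gets strict inequality on one side and must argue separately (exactly as the $l_2=1$ and $l_2=0$ branches at the end of Theorem~\ref{th:1}).

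\textbf{Main obstacle.} The conceptual steps are routine replays of Theorem~\ref{th:1}; the real difficulty is \emph{bookkeeping at scale}. Girth $4$ produces many more symmetry-inequivalent ways to seat the trees around $C_4$ than girth $5$ did, so Cases~$|N|=2,3$ split into numerous subcases (the $\cong/\ncong$ dichotomies for which cyclic arrangement occurs), each requiring its own dominating comparison graph and its own verified numerical value $<1-\tfrac{\sqrt6}{3}$. Getting the boundary ranges in (iii),(v),(vi),(ix) \emph{exactly} right---and correctly identifying precisely which members of $H_{94},A_8,A_{11},A_{35}$ and which sporadic graphs $H_{82},\dots,A_{31}$ attain equality---is where errors are most likely to creep in, and it is the part that most resists a shortcut: each extremal candidate's $\lambda_2$ must be confirmed equal to $1-\tfrac{\sqrt6}{3}$ by an exact (not merely numerical) argument, via the interlacing computation above rather than the decimal approximations used to discard the non-extremal cases.
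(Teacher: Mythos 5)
Your proposal follows essentially the same route as the paper's proof: the same decomposition $U=C_4(T_1,T_2,T_3,T_4)$ with case analysis on $|N|$, the same use of Lemmas \ref{le:5} and \ref{le:2} (supplemented by Lemma \ref{le:3}) to produce dominating comparison graphs discarded by numerical verification, the same depth-at-most-$2$ reduction with elimination of $l_3+\cdots+l_t\geq1$, the same sandwiching between boundary graphs to pin the exact $(l_0,l_1)$ ranges, and the same interlacing argument (Lemma \ref{le:1}) on the spectrum of the principal submatrix $\mathcal{L}_v$ to settle the equality cases for $H_{94}$, $A_8$, $A_{11}$, $A_{35}$. The paper executes precisely this plan, so your outline matches its proof in both structure and substance.
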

\begin{proof} We rewrite $U$ in the form $C_4(T_1,T_2,T_3,T_4)$, where $|V(T_i)|=n_i\geq1$ for $i = 1,2,3,4$ and $\sum\limits_{i=1}^4 n_i = n\geq21$. Let $N=\{i\mid n_i\geq2, i=1,2,3,4\}$. We consider the following cases.
\\{ \bf Case 1.} $|N|=4$. We assume that $n_1',n_2',n_3',n_4'\in\{n_1,n_2,n_3,n_4,\}$ and $2\leq n_1'\leq n_2'\leq n_3'\leq n_4'$.
\\{ \bf Subcase 1.1.} If there are exactly one $n_1'$ such that $n_1'=2$, and there are exactly two $n_2'$ and $n_3'$ such that $n_2'=n_3'=3$, then $n_4'\geq13$ since $n\geq21$. Hence Lemmas \ref{le:5} and \ref{le:2} imply that $\lambda_{2}(C_4(T_1,T_2,T_3,T_4))\leq\max\{\lambda_{2}(H_i)\mid 43\leq i\leq44\}\doteq0.17781<1-\frac{\sqrt{6}}{3}$,
where $H_{i}$ ($43\leq i\leq44$) are shown in Figure \ref{Fig.19.}.
\\{ \bf Subcase 1.2.} If $n_1'=2$, $n_2'=3$, and $4\leq n_3'\leq6$, then $n_4'\geq10$ since $n\geq21$. Hence Lemmas \ref{le:5} and \ref{le:2} imply that $\lambda_{2}(C_4(T_1,T_2,T_3,T_4))\leq\max\{\lambda_{2}(H_i)\mid 45\leq i\leq47\}\doteq0.17963<1-\frac{\sqrt{6}}{3}$,
where $H_{i}$ ($45\leq i\leq47$) are shown in Figure \ref{Fig.19.}.
\\{ \bf Subcase 1.3.} If $n_1'=2$, $n_2'=3$, and $n_4'\geq n_3'\geq7$. Hence Lemmas \ref{le:5} and \ref{le:2} imply that $\lambda_{2}(C_4(T_1,T_2,T_3,T_4))\leq\max\{\lambda_{2}(H_i)\mid 48\leq i\leq49\}\doteq0.16580<1-\frac{\sqrt{6}}{3}$,
where $H_{i}$ ($48\leq i\leq49$) are shown in Figure \ref{Fig.19.}.
\begin{figure}[htbp]
  \centering
  \includegraphics[scale=0.55]{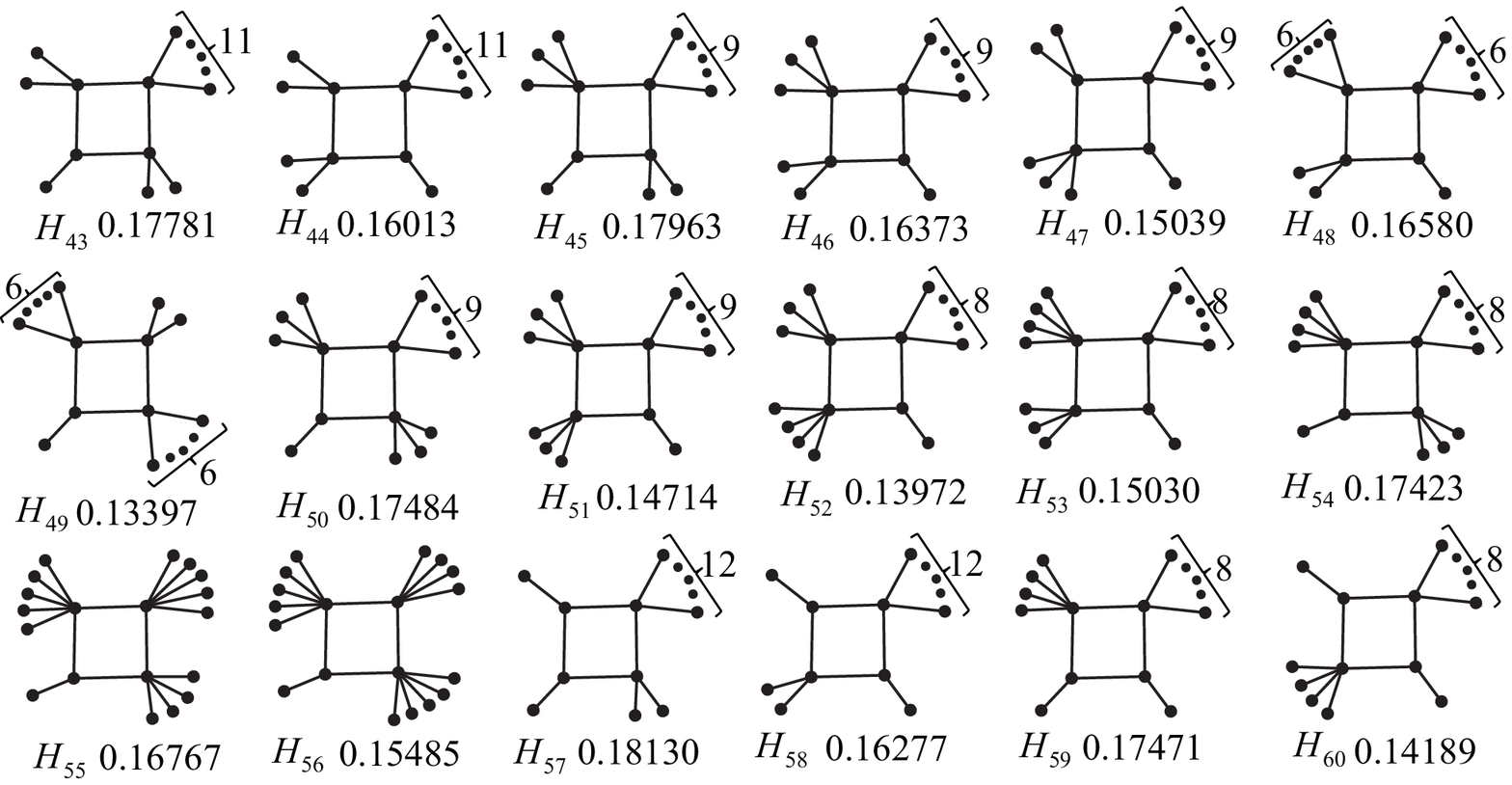}
  \caption{Unicyclic graphs $H_{i}$ and $\lambda_{2}(H_{i})$, $43\leq i\leq60$.}\label{Fig.19.}
\end{figure}
\\{ \bf Subcase 1.4.} If there are exactly one $n_1'$ such that $n_1'=2$, and there are exactly two $n_2'$ and $n_3'$ such that $n_2'=n_3'=4$, then $n_4'\geq10$ since $n\geq21$.  Hence Lemmas \ref{le:5} and \ref{le:2} imply that $\lambda_{2}(C_4(T_1,T_2,T_3,T_4))\leq\max\{\lambda_{2}(H_i)\mid 50\leq i\leq51\}\doteq0.17484<1-\frac{\sqrt{6}}{3}$,
where $H_{i}$ ($50\leq i\leq51$) are shown in Figure \ref{Fig.19.}.
\\{ \bf Subcase 1.5.} If $n_1'=2$, $n_2'=4$, and $5\leq n_3'\leq6$, then $n_4'\geq9$ since $n\geq21$. Hence Lemmas \ref{le:5} and \ref{le:2} imply that $\lambda_{2}(C_4(T_1,T_2,T_3,T_4))\leq\max\{\lambda_{2}(H_i)\mid 52\leq i\leq54\}\doteq0.17423<1-\frac{\sqrt{6}}{3}$,
where $H_{i}$ ($52\leq i\leq54$) are shown in Figure \ref{Fig.19.}.
\\{ \bf Subcase 1.6.} If $n_1'=2$, $n_2'=4$, and $n_4'\geq n_3'\geq7$. Hence Lemmas \ref{le:5} and \ref{le:2} imply that $\lambda_{2}(C_4(T_1,T_2,T_3,T_4))\leq\max\{\lambda_{2}(H_i)\mid 48\leq i\leq49\}\doteq0.16580<1-\frac{\sqrt{6}}{3}$,
where $H_{i}$ ($48\leq i\leq49$) are shown in Figure \ref{Fig.19.}.
\\{ \bf Subcase 1.7.} If there are exactly one $n_1'$ such that $n_1'=2$, and there are exactly two $n_2'$ and $n_3'$ such that $n_2'=n_3'=5$, then $n_4'\geq9$ since $n\geq21$. Hence Lemmas \ref{le:5} and \ref{le:2} imply that $\lambda_{2}(C_4(T_1,T_2,T_3,T_4))\leq\max\{\lambda_{2}(H_i)\mid 53\leq i\leq54\}\doteq0.17423<1-\frac{\sqrt{6}}{3}$,
where $H_{i}$ ($53\leq i\leq54$) are shown in Figure \ref{Fig.19.}.
\\{ \bf Subcase 1.8.} If there are exactly one $n_1'$ such that $n_1'=2$, and there are at most one $n_2'$ such that $n_2'=5$, and $n_4'\geq n_3'\geq6$. Hence Lemmas \ref{le:5} and \ref{le:2} imply that $\lambda_{2}(C_4(T_1,T_2,T_3,T_4))\leq\max\{\lambda_{2}(H_i)\mid 55\leq i\leq56\}\doteq0.16767<1-\frac{\sqrt{6}}{3}$, where $H_{i}$ ($55\leq i\leq56$) are shown in Figure \ref{Fig.19.}.
\\{ \bf Subcase 1.9.} If there are exactly two $n_1'$ and $n_2'$ such that $n_1'=n_2'=2$, and $3\leq n_3'\leq4$, then $n_4'\geq13$ since $n\geq21$. Hence Lemmas \ref{le:5} and \ref{le:2} imply that $\lambda_{2}(C_4(T_1,T_2,T_3,T_4))\leq\max\{\lambda_{2}(H_i)\mid 57\leq i\leq58\}\doteq0.18130<1-\frac{\sqrt{6}}{3}$,
where $H_{i}$ ($57\leq i\leq58$) are shown in Figure \ref{Fig.19.}.
\\{ \bf Subcase 1.10.} If there are exactly two $n_1'$ and $n_2'$ such that $n_1'=n_2'=2$, and $n_4'\geq n_3'\geq5$, then $n_4'\geq9$ since $n\geq21$. Hence Lemmas \ref{le:5} and \ref{le:2} imply that $\lambda_{2}(C_4(T_1,T_2,T_3,T_4))\leq\max\{\lambda_{2}(H_i)\mid 59\leq i\leq60\}\doteq0.17471<1-\frac{\sqrt{6}}{3}$,
where $H_{i}$ ($59\leq i\leq60$) are shown in Figure \ref{Fig.19.}.
\\{ \bf Subcase 1.11.} If $n_1'=n_2'=n_3'=2$, then $n_4'\geq15$ since $n\geq21$.
Since $\lambda_{2}(C_4(T_1,T_2,T_3,T_4))\leq\lambda_{2}(S_4(2,2,2,n_4'))$. If $n_4'\geq18$, by Lemma \ref{le:2}, we have $\lambda_{2}(U)\leq\lambda_{2}(S_4(2,2,2,18))\doteq0.18120<1-\frac{\sqrt{6}}{3}$. Therefore, in order to make
$\lambda_{2}(C_4(T_1,T_2,T_3,T_4))\geq1-\frac{\sqrt{6}}{3}$, we have $15\leq n_4'\leq17$.
Let $d$ be the length of the longest path from $v_4'$ to the pendent vertex in $T_4'$. In order to make
$\lambda_{2}(C_4(T_1,T_2,T_3,T_4))\geq1-\frac{\sqrt{6}}{3}$, then $d\leq2$. If $d\geq3$, then Lemma \ref{le:2} implies  that
$\lambda_{2}(C_4(T_1,T_2,T_3,T_4))\leq\lambda_{2}(H_{61})\doteq0.16959<1-\frac{\sqrt{6}}{3}$, where $H_{61}$ is shown in Figure \ref{Fig.20.}.
Thus $C_4(T_1,T_2,T_3,T_4)$ is the unicyclic graph $H_{62}$ as shown in Figure \ref{Fig.20.}, where $l_i$ ($0\leq i\leq t$) are nonnegative integers
and $21\leq l_0+2l_1+3l_2+\cdots+(t+1)l_t+7=n\leq23$. If $l_2+l_3+l_4+\cdots+l_t\geq1$, then Lemma \ref{le:2} implies  that
$\lambda_{2}(C_4(T_1,T_2,T_3,T_4))\leq\lambda_{2}(H_{63})\doteq0.17353<1-\frac{\sqrt{6}}{3}$, where $H_{63}$ is shown in Figure \ref{Fig.20.}. Thus $l_2+l_3+l_4+\cdots+l_t=0$. If $l_1\geq2$, then Lemma \ref{le:2} implies  that
$\lambda_{2}(C_4(T_1,T_2,T_3,T_4))\leq\lambda_{2}(H_{64})\doteq0.17958<1-\frac{\sqrt{6}}{3}$,
where $H_{64}$ is shown in Figure \ref{Fig.20.}. Thus $l_1\leq1$, and $14\leq l_0+2l_1\leq16$.
\begin{figure}[htbp]
  \centering
  \includegraphics[scale=0.55]{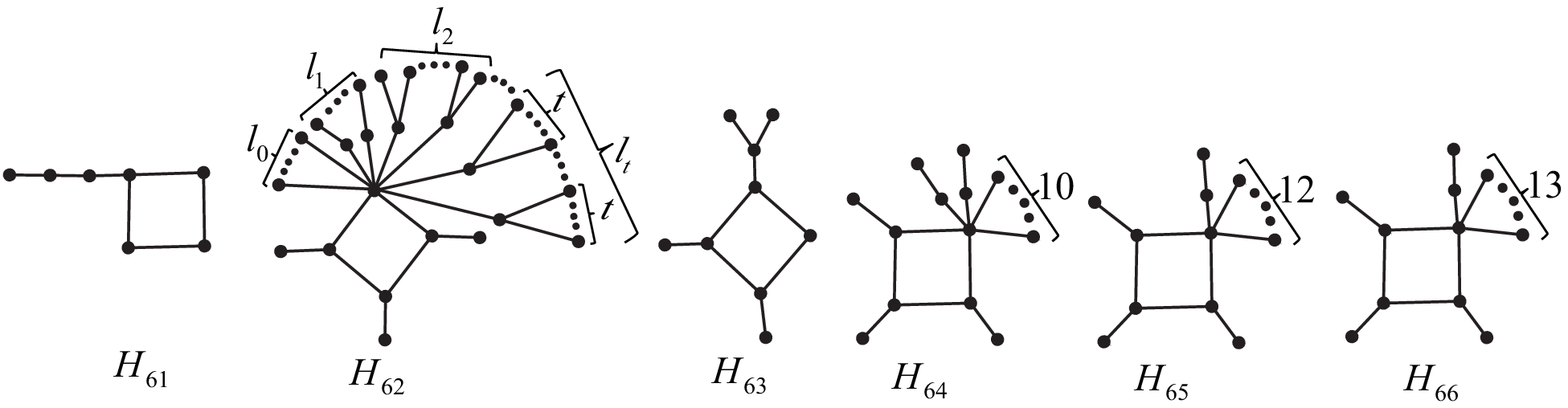}
  \caption{Unicyclic graphs $H_{i}$ and $\lambda_{2}(H_{i})$, $61\leq i\leq66$.}\label{Fig.20.}
\end{figure}
\\{ \bf Subcase 1.11.1.} $l_1=1$, then $12\leq l_0\leq14$.
\\If $l_0=12$, then Lemma \ref{le:2} implies  that $\lambda_{2}(C_4(T_1,T_2,T_3,T_4))=\lambda_{2}(H_{65})=1-\frac{\sqrt{6}}{3}$, where $H_{65}$ is shown in Figure \ref{Fig.20.}. Otherwise, if $14\geq l_0\geq13$, then Lemma \ref{le:2} implies that $\lambda_{2}(C_4(T_1,T_2,T_3,T_4))\leq$ $\lambda_{2}(H_{66})\doteq0.18134<1-\frac{\sqrt{6}}{3}$, where $H_{66}$ is shown in Figure \ref{Fig.20.}.
\\{ \bf Subcase 1.11.2.} $l_1=0$, then $14\leq l_0\leq16$.
\\By Lemma \ref{le:2}, we have $\lambda_{2}(C_4(T_1,T_2,T_3,T_4))\geq\lambda_{2}(S_5(2,2,2,17))=1-\frac{\sqrt{6}}{3}$.
\\{ \bf Subcase 1.12.} If $n_1'=n_2'=n_3'=3$, then $n_4'\geq12$ since $n\geq21$. Hence Lemmas \ref{le:5} and \ref{le:2} imply that $\lambda_{2}(C_4(T_1,T_2,T_3,T_4))\leq\lambda_{2}(H_{67})\doteq0.17444<1-\frac{\sqrt{6}}{3}$,
where $H_{67}$ is shown in Figure \ref{Fig.21.}.
\\{ \bf Subcase 1.13.} If $n_1'=n_2'=3$, and $n_4'\geq n_3'\geq4$, then $n_4'\geq8$ since $n\geq21$. Hence Lemmas \ref{le:5} and \ref{le:2} imply that $\lambda_{2}(C_4(T_1,T_2,T_3,T_4))\leq\max\{\lambda_{2}(H_i)\mid 68\leq i\leq69\}\doteq0.17881<1-\frac{\sqrt{6}}{3}$,
where $H_{i}$ ($68\leq i\leq69$) are shown in Figure \ref{Fig.21.}.
\\{ \bf Subcase 1.14.} If $n_1'=3$, $5\geq n_3'\geq n_2'\geq4$, and $n_4'\geq8$. Hence Lemmas \ref{le:5} and \ref{le:2} imply that $\lambda_{2}(C_4(T_1,T_2,T_3,T_4))\leq\max\{\lambda_{2}(H_i)\mid 68\leq i\leq69\}\doteq0.17881<1-\frac{\sqrt{6}}{3}$,
where $H_{i}$ ($68\leq i\leq69$) are shown in Figure \ref{Fig.21.}.
\\{ \bf Subcase 1.15.} If $n_1'=3$, $5\geq n_2'\geq4$, and $n_4'\geq n_3'\geq6$. Hence Lemmas \ref{le:5} and \ref{le:2} imply that $\lambda_{2}(C_4(T_1,T_2,T_3,T_4))\leq\max\{\lambda_{2}(H_i)\mid 70\leq i\leq71\}\doteq0.17957<1-\frac{\sqrt{6}}{3}$,
where $H_{i}$ ($70\leq i\leq71$) are shown in Figure \ref{Fig.21.}.
\\{ \bf Subcase 1.16.} If $n_1'=3$, then $n_4'\geq n_3'\geq n_2'\geq6$. Hence Lemmas \ref{le:5} and \ref{le:2} imply that $\lambda_{2}(C_4(T_1,T_2,T_3,T_4))\leq\lambda_{2}(H_{72})\doteq0.15485<1-\frac{\sqrt{6}}{3}$,
where $H_{72}$ is shown in Figure \ref{Fig.21.}.
\begin{figure}[htbp]
  \centering
  \includegraphics[scale=0.55]{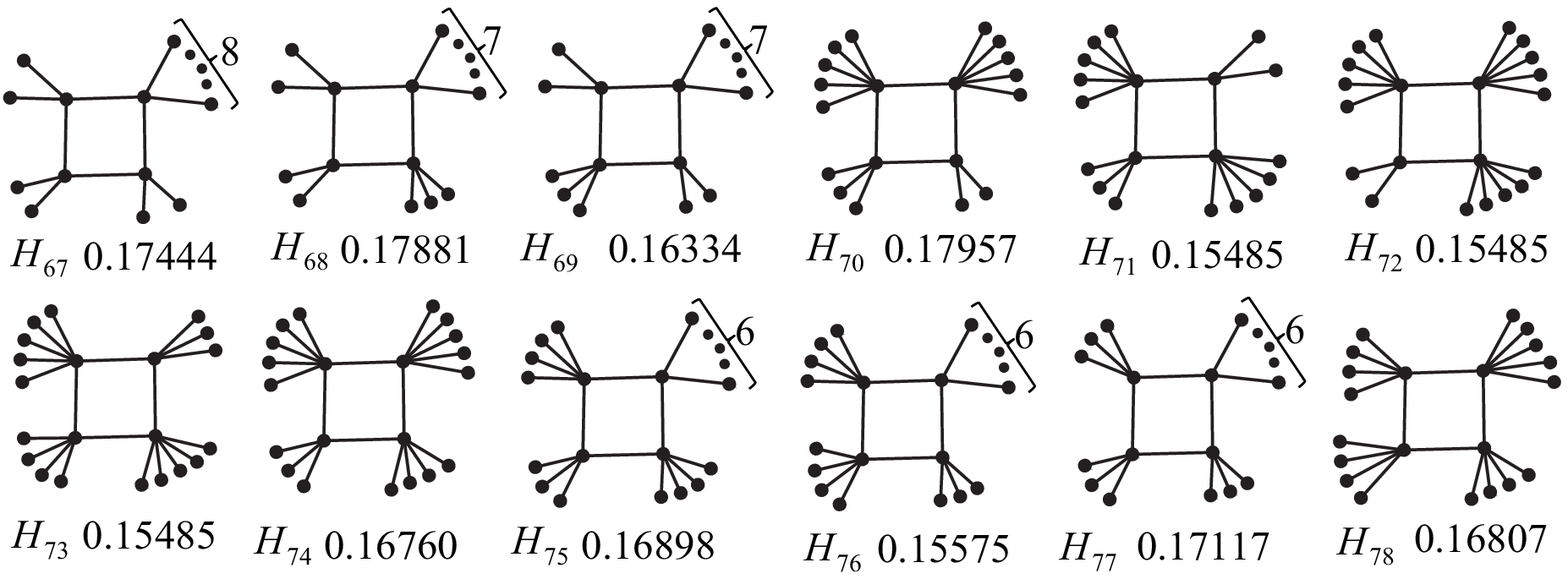}
  \caption{Unicyclic graphs $H_{i}$ and $\lambda_{2}(H_{i})$ , $67\leq i\leq78$.}\label{Fig.21.}
\end{figure}
\\{ \bf Subcase 1.17.} If $n_1'=4$, then $n_4'\geq n_3'\geq n_2'\geq5$. Since $n\geq21$, then by Lemmas \ref{le:5} and \ref{le:2} imply that $\lambda_{2}(C_4(T_1,T_2,T_3,T_4))\leq\max\{\lambda_{2}(H_i)\mid 73\leq i\leq76\}\doteq0.16898<1-\frac{\sqrt{6}}{3}$,
where $H_{i}$ ($73\leq i\leq76$) are shown in Figure \ref{Fig.21.}.
\\{ \bf Subcase 1.18.} If there are at least two $n_1'$ and $n_2'$ such that $n_1'= n_2'=4$, $n_4'\geq n_3'\geq4$, then $n_4'\geq7$ since $n\geq21$. Hence Lemmas \ref{le:5} and \ref{le:2} imply that $\lambda_{2}(C_4(T_1,T_2,T_3,T_4))\leq\lambda_{2}(H_{77})\doteq0.17117<1-\frac{\sqrt{6}}{3}$,
where $H_{77}$ is shown in Figure \ref{Fig.21.}.
\\{ \bf Subcase 1.19.} If $n_4'\geq n_3'\geq n_2'\geq n_1'\geq5$, then $n_4'\geq6$ since $n\geq21$. Hence Lemmas \ref{le:5} and \ref{le:2} imply that $\lambda_{2}(C_4(T_1,T_2,T_3,T_4))\leq\lambda_{2}(H_{79})\doteq0.16807<1-\frac{\sqrt{6}}{3}$,
where $H_{78}$ is shown in Figure \ref{Fig.21.}.
\\{ \bf Case 2.} $|N|=3$. We assume that $n_1',n_2',n_3'\in\{n_1,n_2,n_3,n_4\}$ and $2\leq n_1'\leq n_2'\leq n_3'$.
\\{ \bf Subcase 2.1.} If $n_1'=n_2'=2$, then $n_3'\geq16$ since $n\geq21$.
\\{\bf Subcase 2.1.1.} If $C_4(T_1,T_2,T_3,T_4)\cong C_4(T_1',T_2',T_3',1)$, then $C_4(T_1,T_2,T_3,T_4)\cong C_4(S_2,S_2,T_3',$ $S_1)$.
If $n_3'\geq27$, by Lemmas \ref{le:5} and \ref{le:2}, we have $\lambda_{2}(C_4(S_2,S_2,T_3',S_1))\leq\lambda_{2}(S_4(2,2,n_3',1))\leq\lambda_{2}(S_4(2,2,27,1))\doteq0.18257<1-\frac{\sqrt{6}}{3}$. Therefore, in order to make
$\lambda_{2}(C_4(S_2,S_2,T_3',S_1))\geq1-\frac{\sqrt{6}}{3}$, we have $16\leq n_3'\leq26$.
Let $d$ be the length of the longest path from $v_3'$ to the pendent vertex in $T_3'$. In order to make
$\lambda_{2}(C_4(S_2,S_2,T_3',S_1))\geq1-\frac{\sqrt{6}}{3}$, then $d\leq2$. If $d\geq3$, then Lemma \ref{le:2} implies  that
$\lambda_{2}(C_4(S_2,S_2,T_3',S_1))\leq\lambda_{2}(H_{61})\doteq0.16959<1-\frac{\sqrt{6}}{3}$, where $H_{61}$ is shown in Figure \ref{Fig.20.}.
Thus $C_4(2,2,T_3',1)$ is the unicyclic graph $H_{80}$ as shown in Figure \ref{Fig.22.}, where $l_i$ ($0\leq i\leq t$) are nonnegative integers
and $21\leq l_0+2l_1+3l_2+\cdots+(t+1)l_t+6=n\leq31$. If $l_2+l_3+l_4+\cdots+l_t\geq1$, then Lemma \ref{le:2} implies  that
$\lambda_{2}(C_4(S_2,S_2,T_3',S_1))\leq\lambda_{2}(H_{63})\doteq0.17353<1-\frac{\sqrt{6}}{3}$, where $H_{63}$ is shown in Figure \ref{Fig.20.}. Thus $l_2+l_3+l_4+\cdots+l_t=0$. If $l_1\geq6$, then Lemmas \ref{le:3} and \ref{le:2} imply that
$\lambda_{2}(C_4(S_2,S_2,T_3',S_1))\leq\lambda_{2}(H_{81})\doteq0.18193<1-\frac{\sqrt{6}}{3}$,
where $H_{81}$ is shown in Figure \ref{Fig.22.}. Thus $l_1\leq5$, and $15\leq l_0+2l_1\leq25$.
\\{\bf Subcase 2.1.1.1.} $l_1=5$, then $5\leq l_0\leq 15$.
\\If $l_0=5$, then Lemma \ref{le:2} implies  that $\lambda_{2}(C_4(S_2,S_2,T_3',S_1))=\lambda_{2}(H_{82})=1-\frac{\sqrt{6}}{3}$, where $H_{82}$ is shown in Figure \ref{Fig.22.}. Otherwise, if $15\geq l_0\geq6$, then Lemma \ref{le:2} implies  that $\lambda_{2}(C_4(S_2,S_2,T_3',S_1))\leq\lambda_{2}(H_{83})\doteq0.18268<1-\frac{\sqrt{6}}{3}$, where $H_{83}$ is shown in Figure \ref{Fig.22.}.
\\{\bf Subcase 2.1.1.2.} $l_1=4$, then $7\leq l_0\leq 17$.
\\If $7\leq l_0\leq9$, then Lemma \ref{le:2} implies  that $\lambda_{2}(C_4(S_2,S_2,T_3',S_1))\geq\lambda_{2}(H_{84})=1-\frac{\sqrt{6}}{3}$, where $H_{84}$ is shown in Figure \ref{Fig.22.}. Otherwise, if $17\geq l_0\geq10$, then Lemma \ref{le:2} implies that $\lambda_{2}(C_4(S_2,S_2,T_3',$ $S_1))\leq\lambda_{2}(H_{85})\doteq0.18266<1-\frac{\sqrt{6}}{3}$, where $H_{85}$ is shown in Figure \ref{Fig.22.}.
\begin{figure}[htbp]
  \centering
  \includegraphics[scale=0.55]{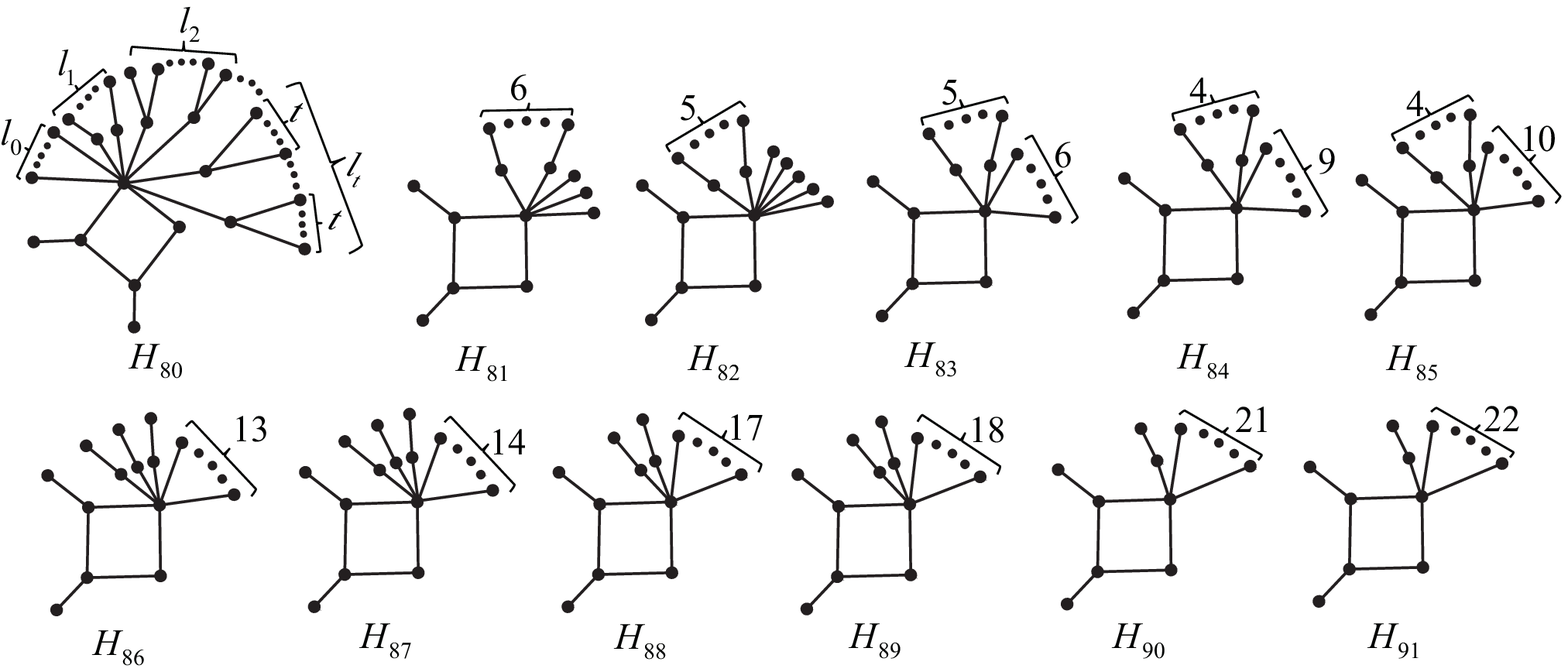}
  \caption{Unicyclic graphs $H_i$, $80\leq i\leq91$.}\label{Fig.22.}
\end{figure}
\\{\bf Subcase 2.1.1.3.} $l_1=3$, then $9\leq l_0\leq 19$.
\\If $9\leq l_0\leq13$, then Lemma \ref{le:2} implies  that $\lambda_{2}(C_4(S_2,S_2,T_3',S_1))\geq\lambda_{2}(H_{86})=1-\frac{\sqrt{6}}{3}$, where $H_{86}$ is shown in Figure \ref{Fig.22.}. Otherwise, if $19\geq l_0\geq14$, then Lemma \ref{le:2} implies that $\lambda_{2}(C_4(S_2,S_2,T_3',S_1))\leq\lambda_{2}(H_{87})\doteq0.18264<1-\frac{\sqrt{6}}{3}$, where $H_{87}$ is shown in Figure \ref{Fig.22.}.
\\{\bf Subcase 2.1.1.4.} $l_1=2$, then $11\leq l_0\leq 21$.
\\If $11\leq l_0\leq17$, then Lemma \ref{le:2} implies that $\lambda_{2}(C_4(S_2,S_2,T_3',S_1))\geq\lambda_{2}(H_{88})=1-\frac{\sqrt{6}}{3}$, where $H_{88}$ is shown in Figure \ref{Fig.22.}. Otherwise, if $21\geq l_0\geq18$, then Lemma \ref{le:2} implies that $\lambda_{2}(C_4(S_2,S_2,T_3',S_1))\leq\lambda_{2}(H_{89})\doteq0.18261<1-\frac{\sqrt{6}}{3}$, where $H_{89}$ is shown in Figure \ref{Fig.22.}.
\\{\bf Subcase 2.1.1.5.} $l_1=1$, then $13\leq l_0\leq 23$.
\\If $13\leq l_0\leq21$, then Lemma \ref{le:2} implies that $\lambda_{2}(C_4(S_2,S_2,T_3',S_1))\geq\lambda_{2}(H_{90})=1-\frac{\sqrt{6}}{3}$, where $H_{90}$ is shown in Figure \ref{Fig.22.}. Otherwise, if $23\geq l_0\geq22$, then Lemma \ref{le:2} implies that $\lambda_{2}(C_4(S_2,S_2,T_3',S_1))\leq\lambda_{2}(H_{91})\doteq0.18259<1-\frac{\sqrt{6}}{3}$, where $H_{91}$ is shown in Figure \ref{Fig.22.}.
\\{\bf Subcase 2.1.1.6.} $l_1=0$, then $15\leq l_0\leq 25$.
\\By Lemma \ref{le:2}, we have $\lambda_{2}(C_4(S_2,S_2,T_3',S_1))\geq\lambda_{2}(S_5(2,2,26,1))=1-\frac{\sqrt{6}}{3}$.
\\{\bf Subcase 2.1.2.} If $C_4(T_1,T_2,T_3,T_4)\cong C_4(T_2',T_3',T_1',S_1)$, that is $C_4(T_1,T_2,T_3,T_4)\cong C_4(S_2,T_3',$ $S_2,S_1)$.
Let $d$ be the length of the longest path from $v_3'$ to the pendent vertex in $T_3'$. In order to make
$\lambda_{2}(C_4(S_2,T_3',S_2,1))\geq1-\frac{\sqrt{6}}{3}$, then $d\leq2$. If $d\geq3$, then Lemma \ref{le:2} implies that
$\lambda_{2}(C_4(S_2,T_3',S_2,1))\leq\lambda_{2}(H_{61})\doteq0.16959<1-\frac{\sqrt{6}}{3}$, where $H_{61}$ is shown in Figure \ref{Fig.20.}.
Thus $C_4(2,T_3',2,1)$ has the form $H_{92}$ as shown in Figure \ref{Fig.23.}, where $l_i$ ($0\leq i\leq t$) are nonnegative integers
and $l_0+2l_1+3l_2+\cdots+(t+1)l_t+6\geq21$. If $l_3+l_4+\cdots+l_t\geq1$, then Lemma \ref{le:2} implies that
$\lambda_{2}(U)\leq\lambda_{2}(H_{93})\doteq0.15016<1-\frac{\sqrt{6}}{3}$, where $H_{93}$ is shown in Figure \ref{Fig.23.}. Thus $l_3+l_4+\cdots+l_t=0$. Therefore $C_4(S_2,T_3',S_2,1)$ is the unicyclic graph $H_{94}$ as shown in Figure \ref{Fig.23.}, here $l_i$ ($0\leq i\leq 2$) are nonnegative integers and $l_0+2l_1+3l_2+6=n\geq21$. Recall that $v_3'$ is a vertex degree $l_0+l_1+l_2+2$ in $H_{94}$. Note that the eigenvalues of
$\mathcal{L}_{v_3'}(H_{94})$ are $\underbrace{1-\frac{\sqrt{6}}{3},\cdots,1-\frac{\sqrt{6}}{3}}_{l_{2}+1}$, $\underbrace{1-\frac{\sqrt{2}}{2},\cdots,1-\frac{\sqrt{2}}{2}}_{l_{1}}$, $1-\frac{\sqrt{3}}{3}$, $\underbrace{1,\cdots,1}_{l_{0}+l_{2}+1}$, $1+\frac{\sqrt{3}}{3}$, $\underbrace{1+\frac{\sqrt{2}}{2},\cdots,1+\frac{\sqrt{2}}{2}}_{l_{1}}$, $\underbrace{1+\frac{\sqrt{6}}{3},\cdots,1+\frac{\sqrt{6}}{3}}_{l_{2}+1}$.
\\If $l_2\geq1$, then Lemma \ref{le:1} implies that $\lambda_{2}(H_{94})=1-\frac{\sqrt{6}}{3}$.
Analogously, when $l_2=0$, $\lambda_{2}(H_{94})\geq1-\frac{\sqrt{6}}{3}$.
\begin{figure}[htbp]
  \centering
  \includegraphics[scale=0.55]{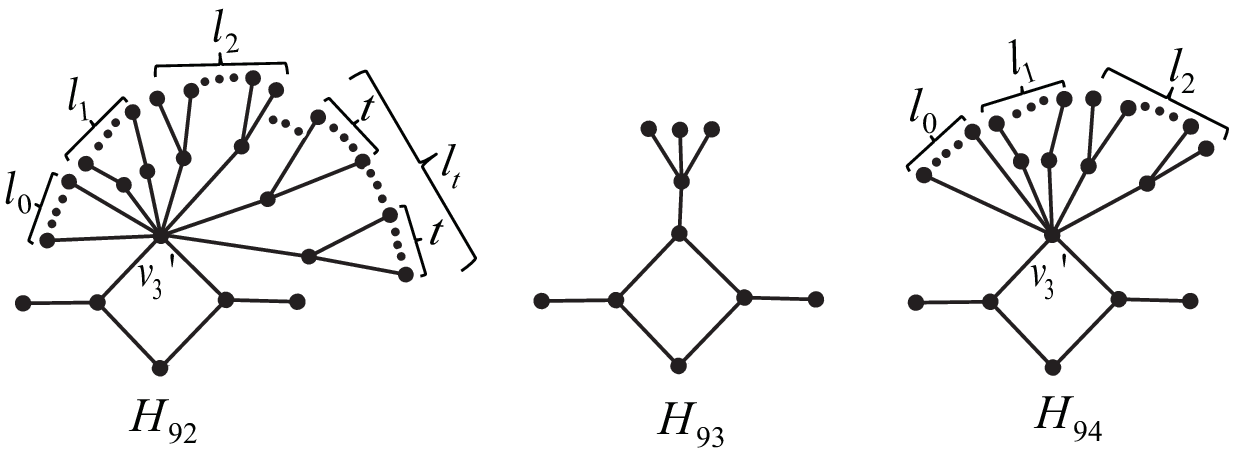}
  \caption{Unicyclic graphs $H_i$, $92\leq i\leq94$.}\label{Fig.23.}
\end{figure}
\\{\bf Subcase 2.2.} If $n_1'=2$, $n_2'=3$, then $n_3'\geq15$ since $n\geq21$.
\\{\bf Subcase 2.2.1.} $C_4(T_1,T_2,T_3,T_4)\cong C_4( T_2',T_3',S_2,S_1)$.
If $n_3'\geq26$, by Lemmas \ref{le:5} and \ref{le:2}, we have
$\lambda_{2}(C_4( T_2',T_3',S_2,S_1))\leq\lambda_{2}(S_4(3,n_3',2,1))\leq\lambda_{2}(S_4(3,26,2,1))\doteq0.18231<1-\frac{\sqrt{6}}{3}$.
Therefore, in order to make
$\lambda_{2}(C_4( T_2',T_3',S_2,S_1))\geq1-\frac{\sqrt{6}}{3}$, we have $15\leq n_3'\leq25$.
Let $d$ be the length of the longest path from $v_3'$ to the pendent vertex in $T_3'$. In order to make
$\lambda_{2}(C_4( T_2',T_3',S_2,S_1))\geq1-\frac{\sqrt{6}}{3}$, then $d\leq2$. If $d\geq3$, then Lemma \ref{le:2} implies that
$\lambda_{2}(C_4( T_2',T_3',S_2,S_1))\leq\lambda_{2}(H_{61})\doteq0.16959<1-\frac{\sqrt{6}}{3}$, where $H_{61}$ is shown in Figure \ref{Fig.20.}. If $T_2'$ is a path of length 2, then Lemmas \ref{le:3} and \ref{le:2} imply that
$\lambda_{2}(C_4(T_2',T_3',S_2,S_1))\leq\lambda_{2}(H_{95})\doteq0.16560<1-\frac{\sqrt{6}}{3}$, where $H_{95}$ is shown in Figure \ref{Fig.24.}. Thus $C_4(T_2',T_3',S_2,S_1)$ is the unicyclic graph $H_{96}$ as shown in Figure \ref{Fig.24.}, where $l_i$ ($0\leq i\leq t$) are nonnegative integers
and $21\leq l_0+2l_1+3l_2+\cdots+(t+1)l_t+7=n\leq31$. If $l_2+l_3+l_4+\cdots+l_t\geq1$, then Lemma \ref{le:2} implies that
$\lambda_{2}(C_4( T_2',T_3',S_2,S_1))\leq\lambda_{2}(H_{97})\doteq0.17710<1-\frac{\sqrt{6}}{3}$, where $H_{97}$ is shown in Figure \ref{Fig.24.}. Thus $l_2+l_3+l_4+\cdots+l_t=0$. If $l_1\geq6$, then Lemmas \ref{le:3} and \ref{le:2} imply that
$\lambda_{2}(C_4( T_2',T_3',S_2,S_1))\leq\lambda_{2}(H_{98})\doteq0.18156<1-\frac{\sqrt{6}}{3}$,
where $H_{98}$ is shown in Figure \ref{Fig.24.}. Thus $l_1\leq5$, and $14\leq l_0+2l_1\leq24$.
\begin{figure}[htbp]
  \centering
  \includegraphics[scale=0.55]{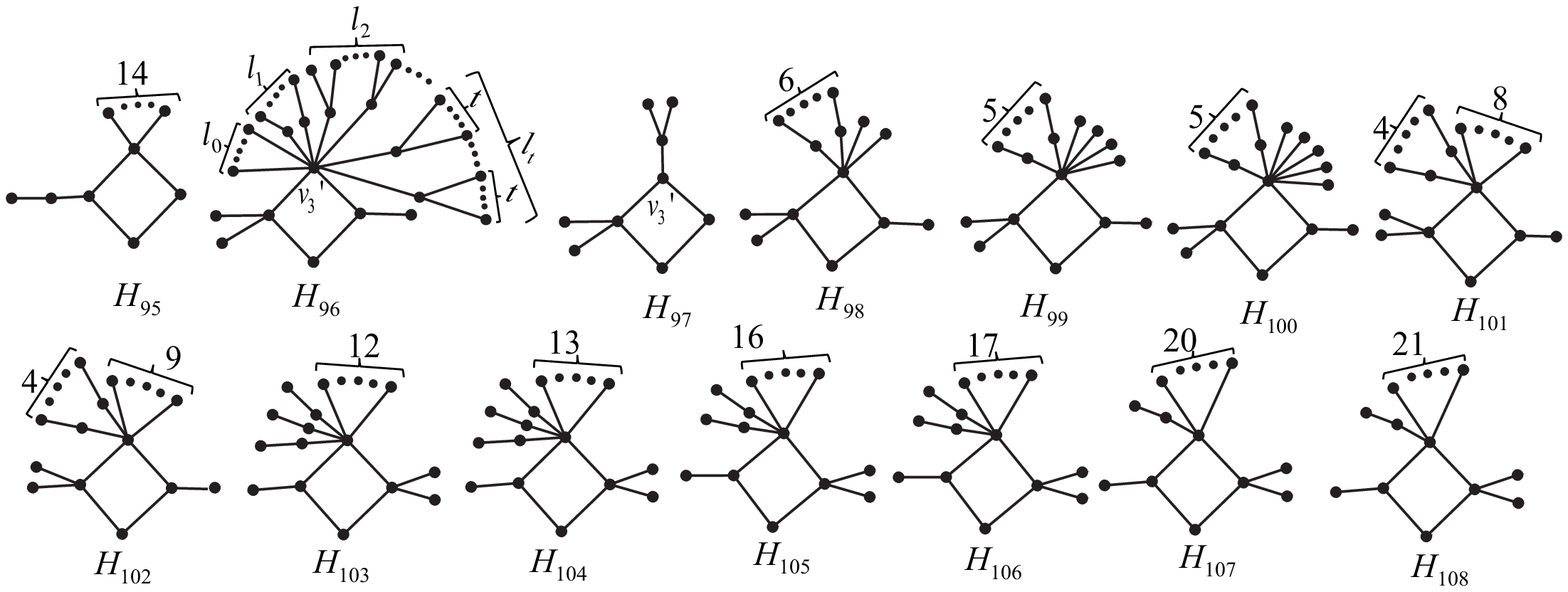}
  \caption{Unicyclic graphs $H_i$, $95\leq i\leq108$.}\label{Fig.24.}
\end{figure}
\\{\bf Subcase 2.2.1.1.} $l_1=5$, then $4\leq l_0\leq14$.
\\If $l_0=4$, then Lemma \ref{le:2} implies that $\lambda_{2}(C_4(T_2',T_3',S_2,S_1))=\lambda_{2}(H_{99})=1-\frac{\sqrt{6}}{3}$, where $H_{99}$ is shown in Figure \ref{Fig.24.}. Otherwise, if $5\leq l_0\leq14$, then Lemma \ref{le:2} implies that
$\lambda_{2}(C_4(T_2',T_3',S_2,S_1))\leq\lambda_{2}(H_{100})\doteq0.18248<1-\frac{\sqrt{6}}{3}$, where $H_{100}$ is shown in Figure \ref{Fig.24.}.
\\{\bf Subcase 2.2.1.2.} $l_1=4$, then $6\leq l_0\leq16$.
\\If $6\leq l_0\leq8$, then Lemma \ref{le:2} implies that $\lambda_{2}(C_4(T_2',T_3',S_2,S_1))\geq\lambda_{2}(H_{101})=1-\frac{\sqrt{6}}{3}$, where $H_{101}$ is shown in Figure \ref{Fig.24.}. Otherwise, if $9\leq l_0\leq16$, then Lemma \ref{le:2} implies that $\lambda_{2}(C_4(2,2,T_3',1))\leq\lambda_{2}(H_{102})\doteq0.18245<1-\frac{\sqrt{6}}{3}$, where $H_{102}$ is shown in Figure \ref{Fig.24.}.
\\{\bf Subcase 2.2.1.3.} $l_1=3$, then $8\leq l_0\leq18$.
\\If $8\leq l_0\leq12$, then Lemma \ref{le:2} implies that $\lambda_{2}(C_4(T_2',T_3',S_2,S_1))\geq\lambda_{2}(H_{103})=1-\frac{\sqrt{6}}{3}$, where $H_{103}$ is shown in Figure \ref{Fig.24.}. Otherwise, if $13\leq l_0\leq18$, then Lemma \ref{le:2} implies that $\lambda_{2}(C_4(T_2',T_3',S_2,S_1))\leq\lambda_{2}(H_{104})\doteq0.18242<1-\frac{\sqrt{6}}{3}$, where $H_{104}$ is shown in Figure \ref{Fig.24.}.
\\{\bf Subcase 2.2.1.4.} $l_1=2$, then $10\leq l_0\leq20$.
\\If $10\leq l_0\leq16$, then Lemma \ref{le:2} implies that $\lambda_{2}(C_4(T_2',T_3',S_2,S_1))\geq\lambda_{2}(H_{105})=1-\frac{\sqrt{6}}{3}$, where $H_{105}$ is shown in Figure \ref{Fig.24.}. Otherwise, if $17\leq l_0\geq20$, then Lemma \ref{le:2} implies that $\lambda_{2}(C_4(T_2',T_3',S_2,S_1))\leq\lambda_{2}(H_{106})\doteq0.18238<1-\frac{\sqrt{6}}{3}$, where $H_{106}$ is shown in Figure \ref{Fig.24.}.
\\{\bf Subcase 2.2.1.5.} $l_1=1$, then $12\leq l_0\leq22$.
\\If $12\leq l_0\leq20$, then Lemma \ref{le:2} implies that $\lambda_{2}(C_4(T_2',T_3',S_2,S_1))\geq\lambda_{2}(H_{107})=1-\frac{\sqrt{6}}{3}$, where $H_{107}$ is shown in Figure \ref{Fig.24.}. Otherwise, if $21\leq l_0\leq22$, then Lemma \ref{le:2} implies that $\lambda_{2}(C_4(T_2',T_3',S_2,S_1))\leq\lambda_{2}(H_{108})\doteq0.18234<1-\frac{\sqrt{6}}{3}$, where $H_{108}$ is shown in Figure \ref{Fig.24.}.
\\{\bf Subcase 2.2.1.6} $l_1=0$, then $14\leq l_0\leq24$.
\\By Lemma \ref{le:2}, we have $\lambda_{2}(C_4(T_2',T_3',S_2,S_1))\geq\lambda_{2}(S_5(3,25,2,1))=1-\frac{\sqrt{6}}{3}$.
\\{\bf Subcase 2.2.2.}  $C_4(T_1,T_2,T_3,T_4)\ncong C_4( T_2',T_3',S_2,S_1)$.
Hence Lemmas \ref{le:5} and \ref{le:2} imply that $\lambda_{2}(C_4(T_1,T_2,T_3,T_4))\leq\max\{\lambda_{2}(S_4(3,15,1,2)), \lambda_{2}(S_4(3,2,15,1))\}\doteq0.18032<1-\frac{\sqrt{6}}{3}$.
\\{\bf Subcase 2.3.} If $n_1'=2$, $n_2'=4$, then $n_3'\geq14$ since $n\geq21$. Hence Lemmas \ref{le:5} and \ref{le:2} imply that $\lambda_{2}(C_4(T_1,T_2,T_3,T_4))\leq\max\{\lambda_{2}(S_4(4,14,2,1)),\lambda_{2}(S_4(4,14,1,2)),\lambda_{2}(S_4(2,14,1,4))\}
\doteq0.17969<1-\frac{\sqrt{6}}{3}$.
\\{\bf Subcase 2.4.} If $n_1'=2$, $5\leq n_2'\leq8$, then $n_3'\geq10$ since $n\geq21$. Hence Lemmas \ref{le:5} and \ref{le:2} imply that $\lambda_{2}(C_4(T_1,T_2,T_3,T_4))\leq\max\{\lambda_{2}(S_4(5,10,2,1)),\lambda_{2}(S_4(5,10,1,2)),\lambda_{2}(S_4(2,10,1,$ $5))\}
$ $\doteq0.17683<1-\frac{\sqrt{6}}{3}$.
\\{\bf Subcase 2.5.} If $n_1'=2$, $n_3'\geq n_2'\geq9$. Hence Lemmas \ref{le:5} and \ref{le:2} imply that $\lambda_{2}(C_4(T_1,T_2,T_3,$ $T_4))\leq\max\{\lambda_{2}(S_4(9,9,2,1)),\lambda_{2}(S_4(2,9,1,9))\}
\doteq0.13556<1-\frac{\sqrt{6}}{3}$.
\\{\bf Subcase 2.6.} $n_3'\geq n_2'\geq n_1'\geq3$
\\{\bf Subcase 2.6.1.} $C_4(T_1,T_2,T_3,T_4)\cong C_4(T_1',T_3',T_2',S_1)$.
\\{\bf Subcase 2.6.1.1.} If $n_1'=n_2'=3$, then $ n_3'\geq14$ since $n\geq21$. If $n_3'\geq18$, by Lemmas \ref{le:5} and \ref{le:2}, we have
$\lambda_{2}(C_4(T_1',T_3',T_2',S_1))\leq\lambda_{2}(S_4(3,n_3',3,1))\leq\lambda_{2}(S_4(3,18,3,1))\doteq0.18082<1-\frac{\sqrt{6}}{3}$.
Therefore, in order to make
$\lambda_{2}(C_4( T_1',T_3',T_2',S_1))\geq1-\frac{\sqrt{6}}{3}$, we have $14\leq n_3'\leq17$.
Let $d$ be the length of the longest path from $v_3'$ to the pendent vertex in $T_3'$. In order to make
$\lambda_{2}(C_4( T_2',T_3',S_2,S_1))\geq1-\frac{\sqrt{6}}{3}$, then $d\leq2$. If $d\geq3$, then Lemma \ref{le:2} implies that
$\lambda_{2}(C_4( T_2',T_3',S_2,S_1))\leq\lambda_{2}(H_{61})\doteq0.16959<1-\frac{\sqrt{6}}{3}$, where $H_{61}$ is shown in Figure \ref{Fig.20.}. If $T_2'$ is a path of length 2, then Lemmas \ref{le:3} and \ref{le:2} imply that
$\lambda_{2}(C_4( T_1',T_3',T_2',S_1))\leq\lambda_{2}(A_{1})\doteq0.16534<1-\frac{\sqrt{6}}{3}$, where $A_{1}$ is shown in Figure \ref{Fig.25.}. Thus $C_4( T_1',T_3',T_2',S_1)$ is the unicyclic graph $A_{2}$ as shown in Figure \ref{Fig.25.}, where $l_i$ ($0\leq i\leq t$) are nonnegative integers
and $21\leq l_0+2l_1+3l_2+\cdots+(t+1)l_t+8=n\leq24$. If $l_2+l_3+l_4+\cdots+l_t\geq1$, then Lemma \ref{le:2} implies that
$\lambda_{2}(C_4( T_1',T_3',T_2',S_1))\leq\lambda_{2}(H_{97})\doteq0.17710<1-\frac{\sqrt{6}}{3}$, where $H_{97}$ is shown in Figure \ref{Fig.24.}. Thus $l_2+l_3+l_4+\cdots+l_t=0$. If $l_1\geq2$, then Lemmas \ref{le:3} and \ref{le:2} imply that
$\lambda_{2}(C_4( T_1',T_3',T_2',S_1))\leq\lambda_{2}(A_{3})\doteq0.18117<1-\frac{\sqrt{6}}{3}$,
where $A_{3}$ is shown in Figure \ref{Fig.25.}. Thus $l_1\leq1$, and $13\leq l_0+2l_1\leq16$.
\begin{figure}[htbp]
  \centering
  \includegraphics[scale=0.55]{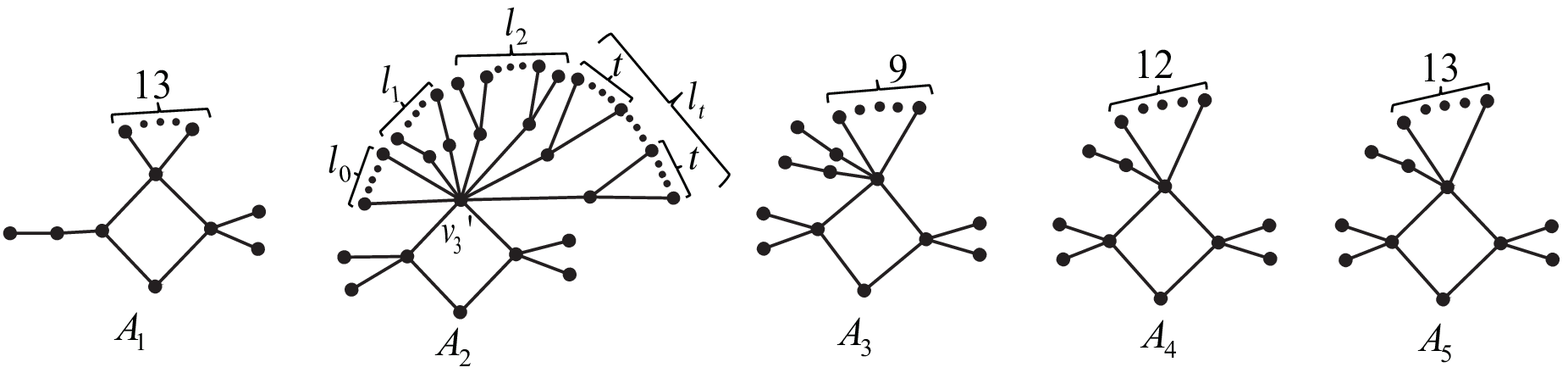}
  \caption{Unicyclic graphs $A_i$, $1\leq i\leq5$.}\label{Fig.25.}
\end{figure}
\\{\bf Subcase 2.6.1.1.1.} $l_1=1$, then $11\leq l_0\leq14$
\\If $11\leq l_0\leq12$, then Lemma \ref{le:2} implies that $\lambda_{2}(C_4( T_1',T_3',T_2',S_1))\geq\lambda_{2}(A_{4})=1-\frac{\sqrt{6}}{3}$, where $A_{4}$ is shown in Figure \ref{Fig.25.}. Otherwise, if $13\leq l_0\leq14$, then Lemma \ref{le:2} implies that
$\lambda_{2}(C_4(T_1',T_3',T_2',S_1))\leq\lambda_{2}(A_{5})\doteq0.18100<1-\frac{\sqrt{6}}{3}$, where $A_{5}$ is shown in Figure \ref{Fig.25.}.
\\{\bf Subcase 2.6.1.1.2.} $l_1=0$, then $13\leq l_0\leq16$
\\By Lemma \ref{le:2}, we have $\lambda_{2}(C_4( T_1',T_3',T_2',S_1)\geq\lambda_{2}(S_5(3,17,3,1))=1-\frac{\sqrt{6}}{3}$.
\\{\bf Subcase 2.6.1.2.} If $n_1'=3$, $n_2'=4, $then $ n_3'\geq13$ since $n\geq21$. Hence Lemmas \ref{le:5} and \ref{le:2} imply that $\lambda_{2}(C_4(T_1',T_3',T_2',S_1))\leq\lambda_{2}(S_4(2,13,4,1))\doteq0.17964<1-\frac{\sqrt{6}}{3}$.
\\{\bf Subcase 2.6.1.3.}  If $n_1'=3$, $5\leq n_2'\leq7$, then $ n_3'\geq10$ since $n\geq21$. Hence Lemmas \ref{le:5} and \ref{le:2} imply that $\lambda_{2}(C_4(T_1',T_3',T_2',S_1))\leq\lambda_{2}(S_4(2,10,5,1))\doteq0.17671<1-\frac{\sqrt{6}}{3}$.
\\{\bf Subcase 2.6.1.4.} If $n_1'=3$, $n_3'\geq n_2'\geq8$. Hence Lemmas \ref{le:5} and \ref{le:2} imply that $\lambda_{2}(C_4(T_1',T_3',T_2',$ $S_1))\leq\lambda_{2}(S_4(2,8,8,1))\doteq0.14689<1-\frac{\sqrt{6}}{3}$.
\\{\bf Subcase 2.6.1.5.} If $n_1'=n_2'=4$, $n_3'\geq12$. Hence Lemmas \ref{le:5} and \ref{le:2} imply that $\lambda_{2}(C_4(T_1',T_3',T_2',$ $S_1))\leq\lambda_{2}(S_4(4,12,4,1))\doteq0.17725<1-\frac{\sqrt{6}}{3}$.
\\{\bf Subcase 2.6.1.6.} If $n_1'=4$, $5\leq n_2'\leq6$, then $ n_3'\geq10$ since $n\geq21$. Hence Lemmas \ref{le:5} and \ref{le:2} imply that $\lambda_{2}(C_4(T_1',T_3',T_2',S_1))\leq\lambda_{2}(S_4(4,10,5,1))\doteq0.17642<1-\frac{\sqrt{6}}{3}$.
\\{\bf Subcase 2.6.1.7.} If $n_1'=4$, $n_3'\geq n_2'\geq7$. Hence Lemmas \ref{le:5} and \ref{le:2} imply that $\lambda_{2}(C_4(T_1',T_3',T_2',$ $S_1))\leq\lambda_{2}(S_4(4,7,7,1))\doteq0.16147<1-\frac{\sqrt{6}}{3}$.
\\{\bf Subcase 2.6.1.8.} If $n_1'=n_2'=5$, $n_3'\geq10$. Hence Lemmas \ref{le:5} and \ref{le:2} imply that $\lambda_{2}(C_4(T_1',T_3',T_2',$ $S_1))\leq\lambda_{2}(S_4(5,10,5,1))\doteq0.17428<1-\frac{\sqrt{6}}{3}$.
\\{\bf Subcase 2.6.1.9.} If there are at most one $n_1'=5$, $n_3'\geq n_2'\geq6$. Hence Lemmas \ref{le:5} and \ref{le:2} imply that $\lambda_{2}(C_4(T_1',T_3',T_2',S_1))\leq\lambda_{2}(S_4(5,6,6,1))\doteq0.16768<1-\frac{\sqrt{6}}{3}$.
\\{\bf Subcase 2.6.2.}  $C_4(T_1,T_2,T_3,T_4)\ncong C_4(T_1',T_3',T_2',S_1)$.
\\{\bf Subcase 2.6.2.1.} If there are at least one $n_1'$ such that $n_1'=3$, then $n_3'\geq9$ since $n\geq21$.
Hence Lemmas \ref{le:5} and \ref{le:2} imply that $\lambda_{2}(C_4(T_1,T_2,T_3,T_4))\leq\lambda_{2}(S_4(3,3,9,1))\doteq0.18043<1-\frac{\sqrt{6}}{3}$.
\\{\bf Subcase 2.6.2.2.} If $n_3'\geq n_2'\geq n_1'\geq4$, then $n_3'\geq7$ since $n\geq21$.
Hence Lemmas \ref{le:5} and \ref{le:2} imply that $\lambda_{2}(C_4(T_1,T_2,T_3,T_4))\leq\lambda_{2}(S_4(4,4,7,1))\doteq0.17279<1-\frac{\sqrt{6}}{3}$.
\\{\bf Case 3.} $|N|=2$. We assume that $n_1',n_2'\in\{n_1,n_2,n_3,n_4\}$, and $2\leq n_1'\leq n_2'$.
\\{\bf Subcase 3.1}  $C_4(T_1,T_2,T_3,T_4)\ncong C_4(T_1',T_2',S_1,S_1)$.
\\{\bf Subcase 3.1.1.} If $n_1'=2$, then $n_2'\geq17$ since $n\geq21$. Since $C_4(T_1,T_2,T_3,T_4)\ncong C_4(S_2,T_2',S_1,S_1)$,
then $C_4(T_1,T_2,T_3,T_4)\cong C_4(S_1,T_2',S_1,S_2)$. Let $d$ be the length of the longest
path from $v_2'$ to the pendent vertex in $T_2'$. In order to make
$\lambda_{2}(C_4(S_2,T_2',S_1,S_1))\geq1-\frac{\sqrt{6}}{3}$, then $d\leq2$. If $d\geq3$, then Lemma \ref{le:2} implies that
$\lambda_{2}(C_4(S_2,T_3',S_2,1))\leq\lambda_{2}(H_{61})\doteq0.16959<1-\frac{\sqrt{6}}{3}$, where $H_{61}$ is shown in Figure \ref{Fig.20.}.
Thus $C_4(2,T_3',2,1)$ is the unicyclic graph $A_{6}$ as shown in Figure \ref{Fig.26.}, where $l_i$ ($0\leq i\leq t$) are nonnegative integers
and $l_0+2l_1+3l_2+\cdots+(t+1)l_t+5=n\geq21$. If $l_3+l_4+\cdots+l_t\geq1$, then Lemma \ref{le:2} implies that
$\lambda_{2}(U)\leq\lambda_{2}(A_{7})\doteq0.15317<1-\frac{\sqrt{6}}{3}$, where $A_{7}$ is shown in Figure \ref{Fig.26.}. Thus $l_3+l_4+\cdots+l_t=0$. Therefore $C_4(S_2,T_3',S_2,1)$ is the unicyclic graph $A_{8}$ as shown in Figure \ref{Fig.26.}, here $l_i$ ($0\leq i\leq 2$) are nonnegative integers and $l_0+2l_1+3l_2+6=n\geq21$. Recall that $v_2'$ is a vertex degree $l_0+l_1+l_2+2$ in $A_{8}$. Note that the eigenvalues of
$\mathcal{L}_{v_2'}(A_{8})$ are $\underbrace{1-\frac{\sqrt{6}}{3},\cdots,1-\frac{\sqrt{6}}{3}}_{l_{2}+1}$, $\underbrace{1-\frac{\sqrt{2}}{2},\cdots,1-\frac{\sqrt{2}}{2}}_{l_{1}}$, $\underbrace{1,\cdots,1}_{l_{0}+l_{2}+2}$, $\underbrace{1+\frac{\sqrt{2}}{2},\cdots,1+\frac{\sqrt{2}}{2}}_{l_{1}}$, $\underbrace{1+\frac{\sqrt{6}}{3},\cdots,1+\frac{\sqrt{6}}{3}}_{l_{2}+1}$.
\\If $l_2\geq1$, then Lemma \ref{le:1} implies that $\lambda_{2}(A_{8})=1-\frac{\sqrt{6}}{3}$.
Analogously, when $l_2=0$, $\lambda_{2}(A_{8})$ $\geq1-\frac{\sqrt{6}}{3}$.
\begin{figure}[htbp]
  \centering
  \includegraphics[scale=0.55]{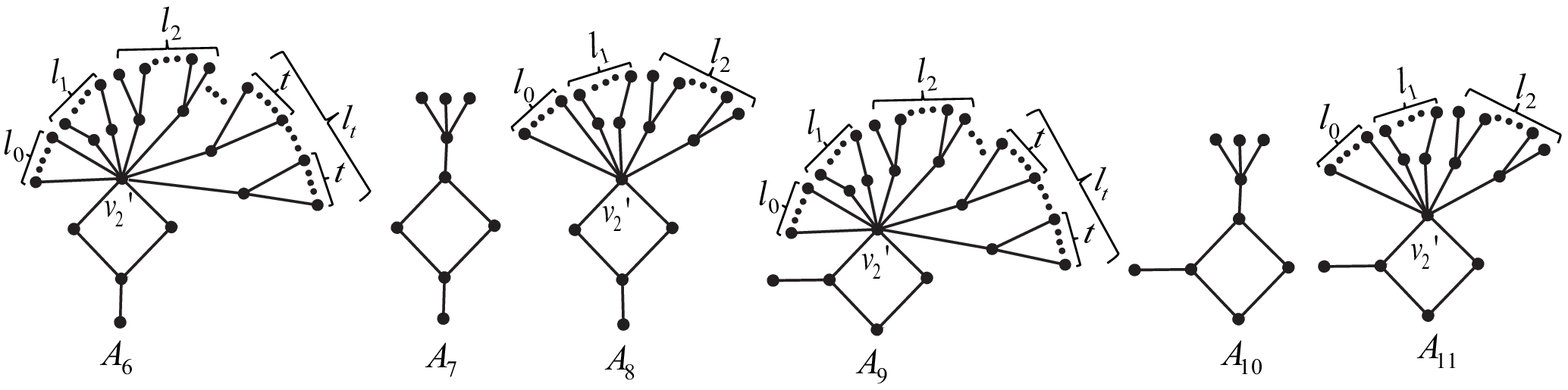}
  \caption{Unicyclic graphs $A_i$, $6\leq i\leq11$.}\label{Fig.26.}
\end{figure}
\\{\bf Subcase 3.1.2.} If $3\leq n_1'\leq7$, then $n_2'\geq12$ since $n\geq21$. Hence Lemmas \ref{le:5} and \ref{le:2} imply that $\lambda_{2}(C_4(T_1,T_2,T_3,T_4))\leq\lambda_{2}(S_4(1,12,1,3))\doteq0.17959<1-\frac{\sqrt{6}}{3}$.
\\{\bf Subcase 3.1.3.} If $n_2'\geq n_1'\geq8$, then Lemmas \ref{le:5} and \ref{le:2} imply that $\lambda_{2}(C_4(T_1,T_2,T_3,T_4))\leq\lambda_{2}(S_4(8,1,8,1))\doteq0.11808<1-\frac{\sqrt{6}}{3}$.
\\{\bf Subcase 3.2.} $C_4(T_1,T_2,T_3,T_4)\cong C_4(T_1',T_2',S_1,S_1)$.
\\{\bf Subcase 3.2.1.} If $n_1'=2$, then $n_2'\geq17$ since $n\geq21$. Since $C_4(T_1,T_2,T_3,T_4)\cong C_4(S_2,T_2',S_1,S_1)$,
let $d$ be the length of the longest
path from $v_2'$ to the pendent vertex in $T_2'$. In order to make
$\lambda_{2}(C_4(S_2,T_2',S_1,S_1))\geq1-\frac{\sqrt{6}}{3}$, then $d\leq2$. If $d\geq3$, then Lemma \ref{le:2} implies that
$\lambda_{2}(C_4(S_2,T_3',S_2,1))\leq\lambda_{2}(H_{61})\doteq0.16959<1-\frac{\sqrt{6}}{3}$, where $H_{61}$ is shown in Figure \ref{Fig.20.}.
Thus $C_4(2,T_3',2,1)$ is the unicyclic graph $A_{9}$ as shown in Figure \ref{Fig.26.}, where $l_i$ ($0\leq i\leq t$) are nonnegative integers
and $l_0+2l_1+3l_2+\cdots+(t+1)l_t+5=n\geq21$. If $l_3+l_4+\cdots+l_t\geq1$, then Lemma \ref{le:2} implies that
$\lambda_{2}(U)\leq\lambda_{2}(A_{10})\doteq0.16150<1-\frac{\sqrt{6}}{3}$, where $A_{10}$ is shown in Figure \ref{Fig.26.}. Thus $l_3+l_4+\cdots+l_t=0$. Therefore $C_4(S_2,T_3',S_2,1)$ has the form $A_{11}$ as shown in Figure \ref{Fig.26.}, here $l_i$ ($0\leq i\leq 2$) are nonnegative integers and $l_0+2l_1+3l_2+6=n\geq21$. Recall that $v_2'$ is a vertex degree $l_0+l_1+l_2+2$ in $A_{11}$. Note that the eigenvalues of
$\mathcal{L}_{v_2'}(A_{11})$ are $\underbrace{1-\frac{\sqrt{6}}{3},\cdots,1-\frac{\sqrt{6}}{3}}_{l_{2}}$, 0.21619, $\underbrace{1-\frac{\sqrt{2}}{2},\cdots,1-\frac{\sqrt{2}}{2}}_{l_{1}}$, 0.63170, $\underbrace{1,\cdots,1}_{l_{0}+l_{2}}$, 1.36830, $\underbrace{1+\frac{\sqrt{2}}{2},\cdots,1+\frac{\sqrt{2}}{2}}_{l_{1}}$, 1.78381 $\underbrace{1+\frac{\sqrt{6}}{3},\cdots,1+\frac{\sqrt{6}}{3}}_{l_{2}}$.
\\If $l_2\geq2$, then Lemma \ref{le:1} implies that $\lambda_{2}(A_{11})=1-\frac{\sqrt{6}}{3}$.
Analogously, when $l_2=1$, $\lambda_{2}(A_{11})\geq1-\frac{\sqrt{6}}{3}$, and when $l_2=0$, $\lambda_{2}(A_{11})>1-\frac{\sqrt{6}}{3}$.
\\{\bf Subcase 3.2.2.} If $n_1'=3$, then $n_2'\geq16$ since $n\geq21$. Since $C_4(T_1,T_2,T_3,T_4)\cong C_4(T_1',T_2',S_1,S_1)$,
let $d$ be the length of the longest path from $v_2'$ to the pendent vertex in $T_2'$.
In order to make
$\lambda_{2}(C_4(T_1',T_2',S_1,S_1))\geq1-\frac{\sqrt{6}}{3}$, then $d\leq2$. If $d\geq3$, then Lemma \ref{le:2} implies that
$\lambda_{2}(C_4(T_1',T_2',S_1,S_1))\leq\lambda_{2}(H_{61})\doteq0.16959<1-\frac{\sqrt{6}}{3}$, where $H_{61}$ is shown in Figure \ref{Fig.20.}.
If $T_1'$ is a path of length 2, then Lemmas \ref{le:3} and \ref{le:2} imply that
$\lambda_{2}(C_4(T_1',T_2',S_1,S_1))\leq\lambda_{2}(H_{95})\doteq0.16560<1-\frac{\sqrt{6}}{3}$, where $H_{95}$ is shown in Figure \ref{Fig.24.}. Thus $C_4(T_1',T_2',S_1,S_1)$ is the unicyclic graph $A_{12}$ as shown in Figure \ref{Fig.27.}, where $l_i$ ($0\leq i\leq t$) are nonnegative integers and $l_0+2l_1+3l_2+\cdots+(t+1)l_t+6=n\geq21$. If $l_2+l_3+\cdots+l_t\geq1$, then Lemma \ref{le:2} implies that
$\lambda_{2}(C_4(T_1',T_2',S_1,S_1))\leq\lambda_{2}(H_{97})\doteq0.17710<1-\frac{\sqrt{6}}{3}$, where $H_{97}$ is shown in Figure \ref{Fig.24.}. Thus $l_2+l_4+\cdots+l_t=0$. Since $\lambda_{2}(S_4(3,42,1,1))=0.18310<1-\frac{\sqrt{6}}{3}$, If $l_0+2l_1\geq41$, then Lemmas \ref{le:5} and \ref{le:2}
imply that $\lambda_{2}(C_4(T_1',T_2',S_1,S_1))\leq\lambda_{2}(S_4(3,42,1,1,1))=0.18310<1-\frac{\sqrt{6}}{3}$. Thus $15\leq l_0+2l_1\leq40$. If $l_1\geq11$,
then Lemmas \ref{le:3} and \ref{le:2} imply that
$\lambda_{2}(C_4(T_1',T_2',S_1,S_1))\leq\lambda_{2}(A_{12}')\doteq0.18214<1-\frac{\sqrt{6}}{3}$, where $A_{12}'$ is a unicyclic graph which obtained form $A_{12}$ with $l_2=l_3=\cdots=l_t=0$ and $l_1=11$. Thus $l_1\leq10$.
\\{\bf Subcase 3.2.2.1.} $l_1=10$, then $0\leq l_0\leq20$.
\\ If $l_0=0$, then $\lambda_{2}(C_4(T_1',T_2',S_1,S_1))=\lambda_{2}(A_{13})=1-\frac{\sqrt{6}}{3}$, where $A_{13}$ is shown in Figure \ref{Fig.27.}. Otherwise, if $1\leq l_0\leq20$, then Lemma \ref{le:2} implies that $\lambda_{2}(C_4(T_1',T_2',S_1,S_1))\leq\lambda_{2}(A_{14})\doteq0.18314<1-\frac{\sqrt{6}}{3}$, where $A_{14}$ is shown in Figure \ref{Fig.27.}.
\\{\bf Subcase 3.2.2.2.} $l_1=9$, then $0\leq l_0\leq22$.
\\ If $0\leq l_0\leq4$, then Lemma \ref{le:2} implies that $\lambda_{2}(C_4(T_1',T_2',S_1,S_1))\geq\lambda_{2}(A_{15})=1-\frac{\sqrt{6}}{3}$, where $A_{15}$ is shown in Figure \ref{Fig.27.}. Otherwise, if $5\leq l_0\leq22$, then Lemma \ref{le:2} implies that $\lambda_{2}(C_4(T_1',T_2',S_1,S_1))\leq\lambda_{2}(A_{16})\doteq0.18314<1-\frac{\sqrt{6}}{3}$, where $A_{16}$ is shown in Figure \ref{Fig.27.}.
\\{\bf Subcase 3.2.2.3.} $l_1=8$, then $0\leq l_0\leq24$.
\\ If $0\leq l_0\leq8$, then Lemma \ref{le:2} implies that $\lambda_{2}(C_4(T_1',T_2',S_1,S_1))\geq\lambda_{2}(A_{17})=1-\frac{\sqrt{6}}{3}$, where $A_{17}$ is shown in Figure \ref{Fig.27.}. Otherwise, if $9\leq l_0\leq24$, then Lemma \ref{le:2} implies that $\lambda_{2}(C_4(T_1',T_2',S_1,S_1))\leq\lambda_{2}(A_{18})\doteq0.18313<1-\frac{\sqrt{6}}{3}$, where $A_{18}$ is shown in Figure \ref{Fig.27.}.
\\{\bf Subcase 3.2.2.4.} $l_1=7$, then $1\leq l_0\leq26$.
\\ If $1\leq l_0\leq12$, then Lemma \ref{le:2} implies that $\lambda_{2}(C_4(T_1',T_2',S_1,S_1))\geq\lambda_{2}(A_{19})=1-\frac{\sqrt{6}}{3}$, where $A_{19}$ is shown in Figure \ref{Fig.27.}. Otherwise, if $12\leq l_0\leq26$, then Lemma \ref{le:2} implies that $\lambda_{2}(C_4(T_1',T_2',S_1,S_1))\leq\lambda_{2}(A_{20})\doteq0.18313<1-\frac{\sqrt{6}}{3}$, where $A_{20}$ is shown in Figure \ref{Fig.27.}.
\\{\bf Subcase 3.2.2.5.} $l_1=6$, then $3\leq l_0\leq28$.
\\ If $3\leq l_0\leq16$, then Lemma \ref{le:2} implies that $\lambda_{2}(C_4(T_1',T_2',S_1,S_1))\geq\lambda_{2}(A_{21})=1-\frac{\sqrt{6}}{3}$, where $A_{21}$ is shown in Figure \ref{Fig.27.}. Otherwise, if $17\leq l_0\leq28$, then Lemma \ref{le:2} implies that $\lambda_{2}(C_4(T_1',T_2',S_1,S_1))\leq\lambda_{2}(A_{22})\doteq0.18312<1-\frac{\sqrt{6}}{3}$, where $A_{22}$ is shown in Figure \ref{Fig.27.}.
\\{\bf Subcase 3.2.2.6.} $l_1=5$, then $5\leq l_0\leq30$.
\\ If $5\leq l_0\leq20$, then Lemma \ref{le:2} implies that $\lambda_{2}(C_4(T_1',T_2',S_1,S_1))\geq\lambda_{2}(A_{23})=1-\frac{\sqrt{6}}{3}$, where $A_{23}$ is shown in Figure \ref{Fig.27.}. Otherwise, if $21\leq l_0\leq28$, then Lemma \ref{le:2} implies that $\lambda_{2}(C_4(T_1',T_2',S_1,S_1))\leq\lambda_{2}(A_{24})\doteq0.18312<1-\frac{\sqrt{6}}{3}$, where $A_{24}$ is shown in Figure \ref{Fig.27.}.
\begin{figure}[htbp]
  \centering
  \includegraphics[scale=0.55]{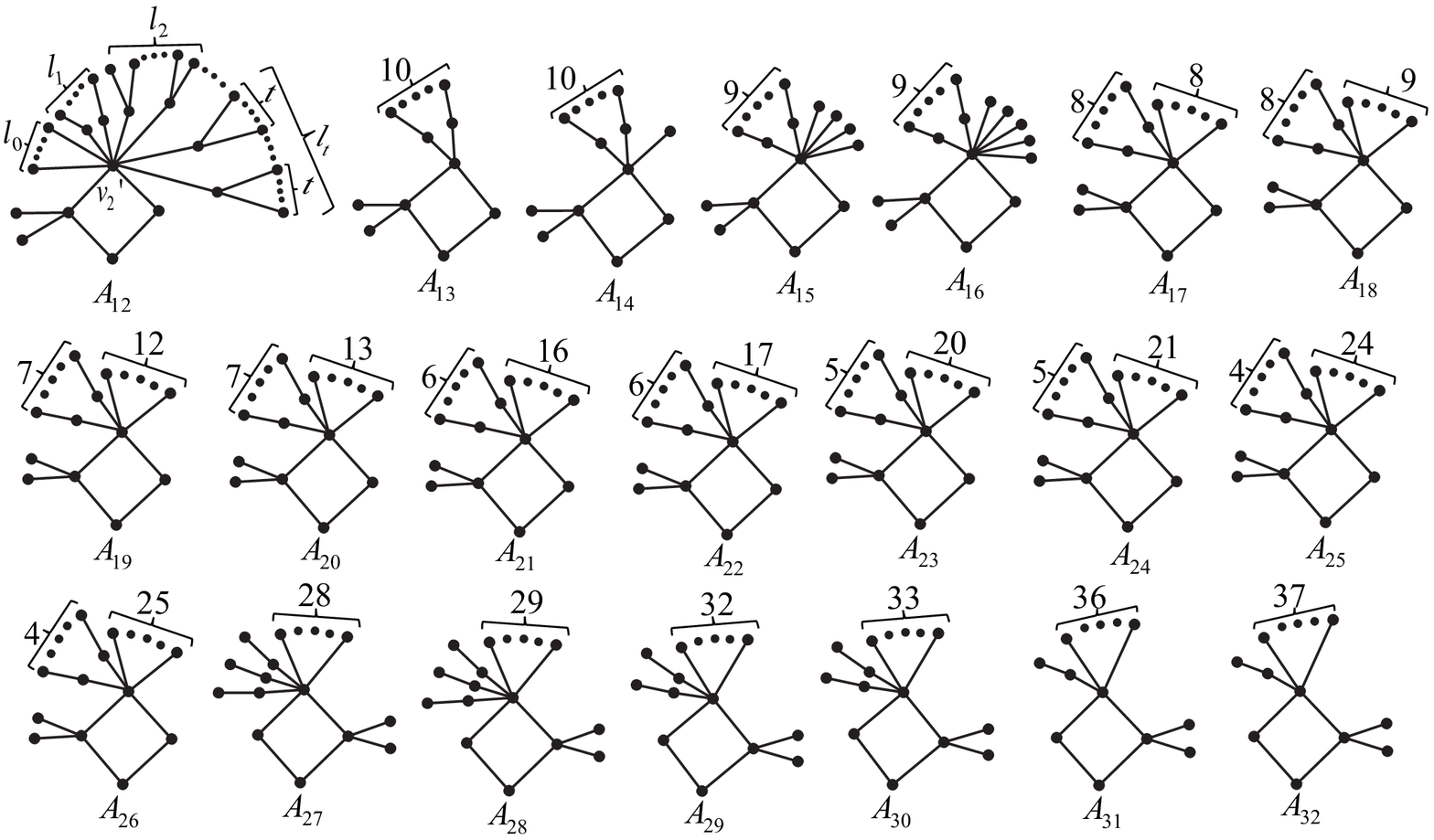}
  \caption{Unicyclic graphs $A_i$, $12\leq i\leq32$.}\label{Fig.27.}
\end{figure}
\\{\bf Subcase 3.2.2.7.} $l_1=4$, then $7\leq l_0\leq32$.
\\ If $7\leq l_0\leq24$, then Lemma \ref{le:2} implies that $\lambda_{2}(C_4(T_1',T_2',S_1,S_1))\geq\lambda_{2}(A_{25})=1-\frac{\sqrt{6}}{3}$, where $A_{25}$ is shown in Figure \ref{Fig.27.}. Otherwise, if $25\leq l_0\leq32$, then Lemma \ref{le:2} implies that $\lambda_{2}(C_4(T_1',T_2',S_1,S_1))\leq\lambda_{2}(A_{26})\doteq0.18312<1-\frac{\sqrt{6}}{3}$, where $A_{26}$ is shown in Figure \ref{Fig.27.}.
\\{\bf Subcase 3.2.2.8.} $l_1=3$, then $9\leq l_0\leq34$.
\\ If $9\leq l_0\leq28$, then Lemma \ref{le:2} implies that $\lambda_{2}(C_4(T_1',T_2',S_1,S_1))\geq\lambda_{2}(A_{27})=1-\frac{\sqrt{6}}{3}$, where $A_{27}$ is shown in Figure \ref{Fig.27.}. Otherwise, if $29\leq l_0\leq34$, then Lemma \ref{le:2} implies that $\lambda_{2}(C_4(T_1',T_2',S_1,S_1))\leq\lambda_{2}(A_{28})\doteq0.18311<1-\frac{\sqrt{6}}{3}$, where $A_{28}$ is shown in Figure \ref{Fig.27.}.
\\{\bf Subcase 3.2.2.9.} $l_1=2$, then $11\leq l_0\leq36$.
\\ If $11\leq l_0\leq32$, then Lemma \ref{le:2} implies that $\lambda_{2}(C_4(T_1',T_2',S_1,S_1))\geq\lambda_{2}(A_{29})=1-\frac{\sqrt{6}}{3}$, where $A_{29}$ is shown in Figure \ref{Fig.27.}. Otherwise, if $33\leq l_0\leq36$, then Lemma \ref{le:2} implies that $\lambda_{2}(C_4(T_1',T_2',S_1,S_1))\leq\lambda_{2}(A_{30})\doteq0.18311<1-\frac{\sqrt{6}}{3}$, where $A_{30}$ is shown in Figure \ref{Fig.27.}.
\\{\bf Subcase 3.2.2.10.} $l_1=1$, then $13\leq l_0\leq38$.
\\ If $13\leq l_0\leq36$, then Lemma \ref{le:2} implies that $\lambda_{2}(C_4(T_1',T_2',S_1,S_1))\geq\lambda_{2}(A_{31})=1-\frac{\sqrt{6}}{3}$, where $A_{31}$ is shown in Figure \ref{Fig.27.}. Otherwise, if $37\leq l_0\leq38$, then Lemma \ref{le:2} implies that $\lambda_{2}(C_4(T_1',T_2',S_1,S_1))\leq\lambda_{2}(A_{32})\doteq0.18310<1-\frac{\sqrt{6}}{3}$, where $A_{32}$ is shown in Figure \ref{Fig.27.}.
\\{\bf Subcase 3.2.2.11.} $l_1=0$, then $15\leq l_0\leq40$.
\\ By Lemma \ref{le:2}, we have $\lambda_{2}(T_1',T_2',S_1,S_1))\geq\lambda_{2}(S_4(3,41,1,1))=1-\frac{\sqrt{6}}{3}$.
\\{\bf Subcase 3.2.3.} If $4\leq n_1'\leq5$, then $n_2'\geq14$ since $n\geq21$. Since $C_4(T_1,T_2,T_3,T_4)\cong C_4(T_1',T_2',S_1,S_1)$,
then Lemmas \ref{le:5} and \ref{le:2} imply that $\lambda_{2}(C_4(T_1',T_2',S_1,S_1))\geq\lambda_{2}(S_4(4,14,$ $1,1)\doteq0.18206<1-\frac{\sqrt{6}}{3}$.
\\{\bf Subcase 3.2.4.} If $n_2'\geq n_1'\geq6$, then $n_2'\geq10$ since $n\geq21$. Since $C_4(T_1,T_2,T_3,T_4)\cong C_4(T_1',T_2',S_1,S_1)$,
then Lemmas \ref{le:5} and \ref{le:2} imply that $\lambda_{2}(C_4(T_1',T_2',S_1,S_1))\geq\lambda_{2}(S_4(6,10,$ $1,1)\doteq0.16024<1-\frac{\sqrt{6}}{3}$.
\begin{figure}[htbp]
  \centering
  \includegraphics[scale=0.55]{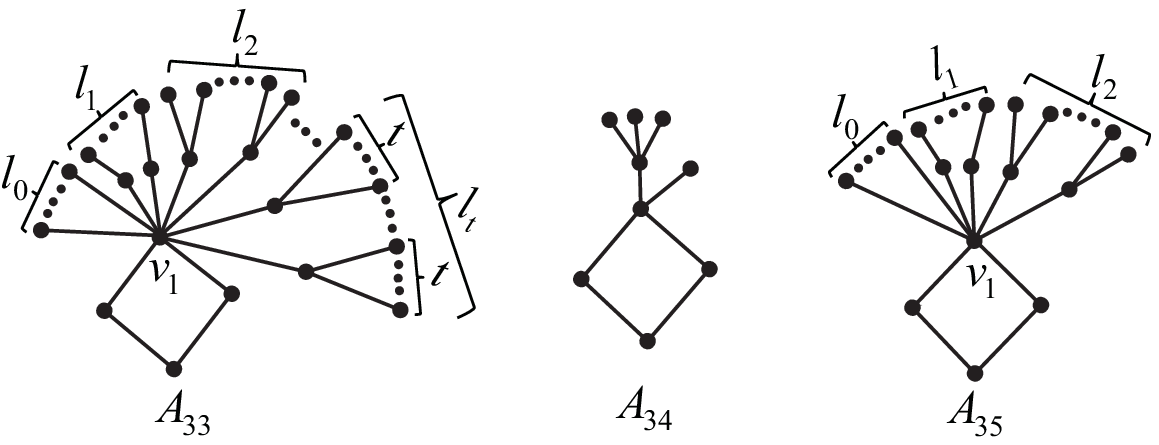}
  \caption{Unicyclic graphs $A_i$, $33\leq i\leq35$.}\label{Fig.28.}
\end{figure}
\\{\bf Case 4.} $|N|=1$. Without loss of generality, we assume that $n_1\geq2$. That is $C_4(T_1,T_2,T_3,$ $T_4)=C_4(T_1,S_1,S_1,S_1)$.
let $d$ be the length of the longest
path from $v_1$ to the pendent vertex in $T_1$. In order to make
$\lambda_{2}(C_4(T_1,S_1,S_1,S_1))\geq1-\frac{\sqrt{6}}{3}$, then $d\leq2$. If $d\geq3$, then Lemma \ref{le:2} implies that
$\lambda_{2}(C_4(T_1,S_1,S_1,S_1))\leq\lambda_{2}(H_{61})\doteq0.16959<1-\frac{\sqrt{6}}{3}$, where $H_{61}$ is shown in Figure \ref{Fig.20.}.
Thus $C_4(T_1,S_1,S_1,S_1)$ is the unicyclic graph $A_{33}$ as shown in Figure \ref{Fig.28.}, where $l_i$ ($0\leq i\leq t$) are nonnegative integers
and $l_0+2l_1+3l_2+\cdots+(t+1)l_t+4=n\geq21$. If $l_3+l_4+\cdots+l_t\geq1$, then Lemma \ref{le:2} implies that
$\lambda_{2}(C_4(T_1,S_1,S_1,S_1))\leq\lambda_{2}(A_{34})\doteq0.17948<1-\frac{\sqrt{6}}{3}$, where $A_{34}$ is shown in Figure \ref{Fig.28.}. Thus $l_3+l_4+\cdots+l_t=0$. Therefore $C_4(T_1,S_1,S_1,S_1)$ is the unicyclic graph $A_{35}$ as shown in Figure \ref{Fig.28.}, here $l_i$ ($0\leq i\leq 2$) are nonnegative integers and $l_0+2l_1+3l_2+4=n\geq21$. Recall that $v_1$ is a vertex degree $l_0+l_1+l_2+2$ in $A_{35}$. Note that the eigenvalues of
$\mathcal{L}_{v_2}(A_{35})$ are $\underbrace{1-\frac{\sqrt{6}}{3},\cdots,1-\frac{\sqrt{6}}{3}}_{l_{2}}$, 0.21619, $\underbrace{1-\frac{\sqrt{2}}{2},\cdots,1-\frac{\sqrt{2}}{2}}_{l_{1}+1}$, $\underbrace{1,\cdots,1}_{l_{0}+l_{2}+1}$, $\underbrace{1+\frac{\sqrt{2}}{2},\cdots,1+\frac{\sqrt{2}}{2}}_{l_{1}+1}$, $\underbrace{1+\frac{\sqrt{6}}{3},\cdots,1+\frac{\sqrt{6}}{3}}_{l_{2}}$.
\\If $l_2\geq2$, then Lemma \ref{le:1} implies that $\lambda_{2}(A_{35})=1-\frac{\sqrt{6}}{3}$.
Analogously, when $l_2=1$, $\lambda_{2}(A_{35})\geq1-\frac{\sqrt{6}}{3}$, and
when $l_2=0$, $\lambda_{2}(A_{35})>1-\frac{\sqrt{6}}{3}$.
\end{proof}

\section{ All unicyclic graphs of girth $3$ with $\lambda_{2}\geq1-\frac{\sqrt{6}}{3}$}

In this section, we determine all unicyclic graphs in $\mathcal{U}_n^3$ ($n\geq21$) with $\lambda_{2}\geq1-\frac{\sqrt{6}}{3}$.
Before introducing our results we recall some notation. Note that if $U\in\mathcal{U}_n^3$, then $U$ consists
of the cycle $C_3=v_1v_2v_3v_1$ and three trees $T_1$, $T_2$ and $T_3$ attached at the vertices $v_1$, $v_2$
and $v_3$, respectively.

\noindent\begin{theorem}\label{th:3} Suppose that $U\in\mathcal{U}_n^3$ with $n\geq21$.
Then $\lambda_{2}(U)\geq1-\frac{\sqrt{6}}{3}$ if and only if one of the following items holds:

(i) $U$ is the unicyclic graph $B_{4}$, where $B_{4}$ is shown in Figure \ref{Fig.29.}, here $l_i$ ($0\leq i\leq 2$) are nonnegative integers and $l_0+2l_1+3l_2+5=n\geq21$.

(ii) $U$ is the unicyclic graph $B_{7}$, where $B_{7}$ is shown in Figure \ref{Fig.29.}, here $l_i$ ($0\leq i\leq 2$) are nonnegative integers and $l_0+2l_1+3l_2+6=n\geq21$.

(iii) $U$ is the unicyclic graph $B_{10}$, where $B_{10}$ is shown in Figure \ref{Fig.29.}, where $l_i$ ($0\leq i\leq 1$) are nonnegative integers, $21\leq l_0+2l_1+7=n\leq28$, $0\leq l_1\leq3$ and one of the following items holds:

(1) $l_1=3$ and $8\leq l_0\leq9$.

(2) $l_1=2$ and $10\leq l_0\leq13$.

(3) $l_1=1$ and $12\leq l_0\leq17$.

(4) $l_1=0$ and $14\leq l_0\leq21$.

(iv) $U$ is the unicyclic graph $C_{2}$, where $C_{2}$ is shown in Figure \ref{Fig.30.}, where $l_i$ ($0\leq i\leq 1$) are nonnegative integers, $21\leq l_0+2l_1+7=n\leq37$, $0\leq l_1\leq7$ and one of the following items holds:

(1) $l_1=7$ and $0\leq l_0\leq2$.

(2) $l_1=6$ and $2\leq l_0\leq6$.

(3) $l_1=5$ and $4\leq l_0\leq10$.

(4) $l_1=4$ and $6\leq l_0\leq14$.

(5) $l_1=3$ and $8\leq l_0\leq18$.

(6) $l_1=2$ and $10\leq l_0\leq22$.

(7) $l_1=1$ and $12\leq l_0\leq26$.

(8)  $l_1=0$ and $14\leq l_0\leq30$.

(v) $U$ is isomorphic to $B_{35}$, where $B_{35}$ is shown in Figure \ref{Fig.32.}.

(vi) $U$ is isomorphic to $S_3(3,4,k)$ with $14\leq k\leq16$.

(vii) $U$ is isomorphic to $S_3(4,4,13)$.

(viii) $U$ is isomorphic to $S_3(7,7,7)$.

(ix) $U$ is the unicyclic graph $B_{42}$, where $B_{42}$ is shown in Figure \ref{Fig.32.}, here $l_i$ ($0\leq i\leq 2$) are nonnegative integers and $l_0+2l_1+3l_2+4=n\geq21$.

(x) $U$ is the unicyclic graph $B_{44}$, where $B_{44}$ is shown in Figure \ref{Fig.32.}, here $l_i$ ($0\leq i\leq 2$) are nonnegative integers and $l_0+2l_1+3l_2+5=n\geq21$.

(xi) $U$ is the unicyclic graph $B_{45}$, where $B_{45}$ is shown in Figure \ref{Fig.34.}, where $l_i$ ($0\leq i\leq 1$) are nonnegative integers,
 $21\leq l_0+2l_1+6=n\leq39$, $0\leq l_1\leq8$ and one of the following items holds:

(1) $l_1=8$ and $0\leq l_0\leq1$.

(2) $l_1=7$ and $1\leq l_0\leq5$.

(3) $l_1=6$ and $3\leq l_0\leq9$.

(4) $l_1=5$ and $5\leq l_0\leq13$.

(5) $l_1=4$ and $7\leq l_0\leq17$.

(6) $l_1=3$ and $9\leq l_0\leq21$.

(7) $l_1=2$ and $11\leq l_0\leq25$.

(8) $l_1=1$ and  $13\leq l_0\leq19$.

(9) $l_1=0$ and $15\leq l_0\leq33$.

(xii) $U$ is the unicyclic graph $B_{66}$, where $B_{66}$ is shown in Figure \ref{Fig.35.}, here $l_i$ ($0\leq i\leq 2$) are nonnegative integers and $l_0+2l_1+3l_2+3=n\geq21$.

Furthermore, the equality holds if and only if $U$ is isomorphic to $B_{4}$ with $l_2\geq2$, or $B_{7}$ with $l_2\geq1$,
or $B_{42}$ with $l_2\geq2$, $B_{44}$ with $l_2\geq2$, $B_{66}$ with $l_2\geq2$.
\end{theorem}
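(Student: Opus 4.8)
The plan is to follow exactly the template established in the proofs of Theorems \ref{th:1} and \ref{th:2}, adapted to the triangle. Writing $U=C_3(T_1,T_2,T_3)$ with $|V(T_i)|=n_i$ and $N=\{i:n_i\geq2\}$, note first that $n\geq21>3=|V(C_3)|$ forces $|N|\geq1$, so the whole argument splits into the three cases $|N|=3$, $|N|=2$, $|N|=1$. In each case I would apply Lemma \ref{le:5} to replace every attached tree by a star rooted at its cycle vertex without decreasing $\lambda_2$, reducing the problem to the graphs $S_3(n_1,n_2,n_3)$ and their mild refinements, and then stratify into subcases according to how many of the $n_i$ equal $2,3,\dots$ and which is largest (the constraint $\sum n_i=n\geq21$ forcing one $n_i$ to be large in every subcase).

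The elimination half of the argument is routine but voluminous. For each subcase in which the configuration cannot support $\lambda_2\geq1-\frac{\sqrt{6}}{3}$, I would exhibit a fixed small witness graph $B_i$ (or $C_j$, or some $S_3(\cdots)$) obtained by deleting pendant vertices (Lemma \ref{le:2}) or by separating a cut edge (Lemma \ref{le:3}), for which a direct numerical computation gives $\lambda_2<1-\frac{\sqrt{6}}{3}\approx0.1835$; by monotonicity this bounds $\lambda_2(U)$ and rules the subcase out. Two structural restrictions recur and are proved the same way: the longest pendant path hanging from the dominant cycle vertex must have length $d\leq2$ (a path of length $\geq3$ drops $\lambda_2$ below threshold by Lemma \ref{le:2}), and no branch of depth $\geq3$ can occur, i.e.\ in the surviving notation $l_3=l_4=\cdots=0$. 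What remains after this pruning are precisely the parametrized families $B_4,B_7,B_{10},C_2,B_{42},B_{44},B_{45},B_{66}$ (together with the finitely many sporadic graphs $B_{35}$, $S_3(3,4,k)$, $S_3(4,4,13)$, $S_3(7,7,7)$), described by a depth profile $(l_0,l_1,l_2)$: $l_0$ pendant edges, $l_1$ pendant paths of length $2$, and $l_2$ \emph{cherries} (a degree-$3$ neighbour of the dominant vertex carrying two leaves) attached to a common high-degree vertex $v$.

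The exact-value and equality statements come from the interlacing Lemma \ref{le:1} applied to the principal submatrix $\mathcal{L}_v(U)$. The point is that deleting $v$ disconnects $U$ into the small pieces catalogued above, so the spectrum of $\mathcal{L}_v$ is a disjoint union: each cherry contributes the pair $1\pm\frac{\sqrt{6}}{3}$ and a $1$, each length-$2$ path contributes $1\pm\frac{\sqrt{2}}{2}$, each pendant edge contributes a $1$, and the two remaining triangle vertices contribute a small explicit block whose eigenvalues are algebraic numbers (the $\frac{3}{4}\pm\frac{\sqrt{5}}{4}$, $\frac{5}{4}\pm\frac{\sqrt{5}}{4}$ type roots seen for $H_{42}$, or the numerical roots seen for $A_{11}$). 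Because $1-\frac{\sqrt{6}}{3}$ is the smallest of all these values, the interlacing chain $x_1=0\leq y_1\leq x_2=\lambda_2(U)\leq y_2$ gives $\lambda_2(U)=1-\frac{\sqrt{6}}{3}$ whenever $1-\frac{\sqrt{6}}{3}$ is a \emph{double} smallest eigenvalue of $\mathcal{L}_v$, $\lambda_2(U)\geq1-\frac{\sqrt{6}}{3}$ when it is simple, and $\lambda_2(U)>1-\frac{\sqrt{6}}{3}$ when $y_1$ already exceeds the threshold. Counting the multiplicity of $1-\frac{\sqrt{6}}{3}$ then yields the stated thresholds on $l_2$ for each family.

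The main obstacle is the bookkeeping generated by the triangle. Unlike the girth-$4$ and girth-$5$ cases, where deleting one cycle vertex leaves the remaining cycle vertices on a path, deleting a vertex of $C_3$ leaves an \emph{edge} between the other two cycle vertices, so the triangle remnant couples those two vertices and alters the small explicit block in $\mathcal{L}_v$. This is exactly why the thresholds differ across families — in particular why $B_7$ achieves equality already at $l_2\geq1$: there the fixed attachment on a second cycle vertex itself contributes an eigenvalue equal to $1-\frac{\sqrt{6}}{3}$ to $\mathcal{L}_v$, so a single cherry on the dominant vertex suffices to make the minimal eigenvalue double. Verifying these small triangle-coupled blocks case by case, and pinning down the boundary values of $l_0$ (where a graph passes from $\lambda_2>1-\frac{\sqrt{6}}{3}$ to $\lambda_2<1-\frac{\sqrt{6}}{3}$), is where all the real work lies.
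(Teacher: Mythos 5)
Your proposal follows essentially the same route as the paper's proof: the same case split on $|N|\in\{3,2,1\}$, the same elimination via Lemmas \ref{le:5}, \ref{le:2}, \ref{le:3} with small numerical witness graphs, the same structural pruning ($d\leq2$ and $l_3=l_4=\cdots=0$), and the same interlacing argument (Lemma \ref{le:1}) on $\mathcal{L}_v$ whose block spectrum (cherries giving $1\pm\frac{\sqrt{6}}{3}$, length-$2$ paths giving $1\pm\frac{\sqrt{2}}{2}$, plus the triangle-remnant block) yields the stated thresholds on $l_2$. Your explanation of why $B_7$ attains equality already at $l_2\geq1$ — the remnant block itself contributing an eigenvalue $1-\frac{\sqrt{6}}{3}$ — is exactly the mechanism visible in the paper's listed spectrum of $\mathcal{L}_{v_3}(B_7)$.
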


\begin{proof} We rewrite $U$ in the form $C_3(T_1,T_2,T_3)$, where $|V(T_i)|=n_i\geq1$ for $i = 1,2,3$, $n_3\geq n_2\geq n_1\geq1$, and $\sum\limits_{i=1}^3 n_i = n\geq21$. Let $N=\{i\mid n_i\geq2, i=1,2,3\}$. We consider the following cases.
\\{\bf Case 1.} $|N|=3$. Without lose of generality, we assume that $n_3\geq n_2\geq n_1\geq1$.
\\{\bf Subcase 1.1.} If $n_1=n_2=2$, then $n_3\geq17$ since $n\geq21$.
Let $d$ be the length of the longest path from $v_3$ to the pendent vertex in $T_3$. In order to make
$\lambda_{2}(C_3(T_1,T_2,T_3))\geq1-\frac{\sqrt{6}}{3}$, then $d\leq2$. If $d\geq3$, then Lemma \ref{le:2} implies that
$\lambda_{2}(C_3(T_1,T_2,T_3))\leq\lambda_{2}(B_{1})\doteq0.16989<1-\frac{\sqrt{6}}{3}$, where $B_{1}$ is shown in Figure \ref{Fig.29.}.
Thus $C_3(T_1,T_2,T_3)$ is the unicyclic graph $B_{2}$ as shown in Figure \ref{Fig.29.}, where $l_i$ ($0\leq i\leq t$) are nonnegative integers
and $l_0+2l_1+3l_2+\cdots+(t+1)l_t+5=n\geq21$. If $l_3+l_4+\cdots+l_t\geq1$, then Lemma \ref{le:2} implies that
$\lambda_{2}(C_3(T_1,T_2,T_3))\leq\lambda_{2}(B_{3})\doteq0.17985<1-\frac{\sqrt{6}}{3}$, where $B_{3}$ is shown in Figure \ref{Fig.29.}.
Thus $l_3+l_4+\cdots+l_t=0$. Therefore $C_3(T_1,T_2,T_3)$ has the form $B_{4}$ as shown in Figure \ref{Fig.29.}, here $l_i$ ($0\leq i\leq 2$) are nonnegative integers and $l_0+2l_1+3l_2+5=n\geq21$. Recall that $v_3$ is a vertex degree $l_0+l_1+l_2+2$ in $B_{4}$. Note that the eigenvalues of
$\mathcal{L}_{v_3}(B_{4})$ are $\underbrace{1-\frac{\sqrt{6}}{3},\cdots,1-\frac{\sqrt{6}}{3}}_{l_{2}}$, 0.23240, $\underbrace{1-\frac{\sqrt{2}}{2},\cdots,1-\frac{\sqrt{2}}{2}}_{l_{1}}$, 0.56574, $\underbrace{1,\cdots,1}_{l_{0}+l_{2}}$, 1.43426, $\underbrace{1+\frac{\sqrt{2}}{2},\cdots,1+\frac{\sqrt{2}}{2}}_{l_{1}}$, 1.76759,  $\underbrace{1+\frac{\sqrt{6}}{3},\cdots,1+\frac{\sqrt{6}}{3}}_{l_{2}}$.
\\If $l_2\geq2$, then Lemma \ref{le:1} implies that $\lambda_{2}(B_{4})=1-\frac{\sqrt{6}}{3}$.
Analogously, when $l_2=1$, $\lambda_{2}(B_{4})\geq1-\frac{\sqrt{6}}{3}$, and when $l_2=0$, $\lambda_{2}(B_{4})>1-\frac{\sqrt{6}}{3}$.
\begin{figure}[htbp]
  \centering
  \includegraphics[scale=0.55]{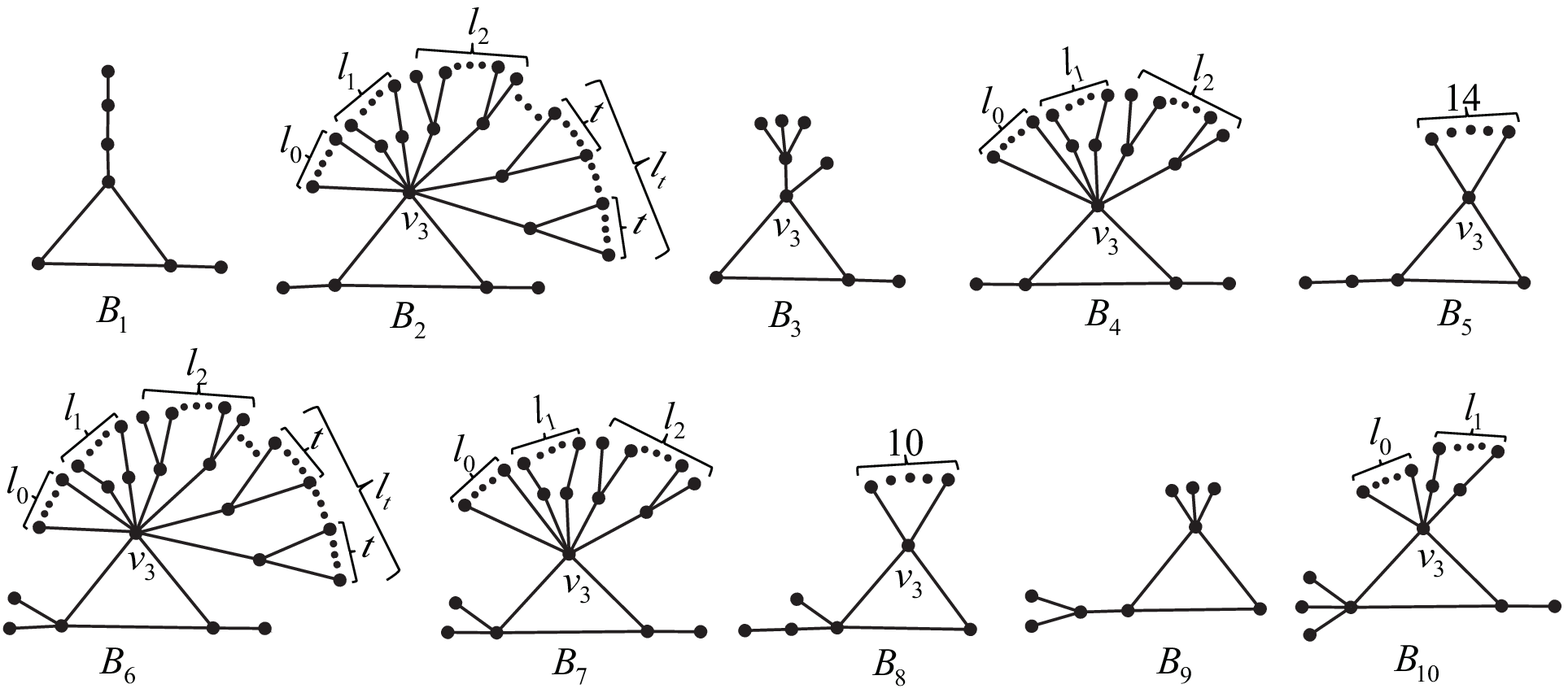}
  \caption{Unicyclic graphs $B_i$, $1\leq i\leq10$.}\label{Fig.29.}
\end{figure}
\\{\bf Subcase 1.2.} If $n_1=2$, $n_2=3$, then $n_3\geq16$ since $n\geq21$.
Let $d$ be the length of the longest path from $v_3$ to the pendent vertex in $T_3$. In order to make
$\lambda_{2}(C_3(T_1,T_2,T_3))\geq1-\frac{\sqrt{6}}{3}$, then $d\leq2$. If $d\geq3$, then Lemma \ref{le:2} implies that
$\lambda_{2}(C_3(T_1,T_2,T_3))\leq\lambda_{2}(B_{1})\doteq0.16989<1-\frac{\sqrt{6}}{3}$, where $B_{1}$ is shown in Figure \ref{Fig.29.}.
 If $T_2$ is a path of length 2, then Lemmas \ref{le:3} and \ref{le:2} imply that
$\lambda_{2}(C_3(T_1,T_2,T_3))\leq\lambda_{2}(B_{5})\doteq0.18144<1-\frac{\sqrt{6}}{3}$, where $B_{5}$ is shown in Figure \ref{Fig.29.}.
Thus $C_3(T_1,T_2,T_3)$ is the unicyclic graph $B_{6}$ as shown in Figure \ref{Fig.29.}, where $l_i$ ($0\leq i\leq t$) are nonnegative integers
and $l_0+2l_1+3l_2+\cdots+(t+1)l_t+6=n\geq21$. If $l_3+l_4+\cdots+l_t\geq1$, then Lemma \ref{le:2} implies that
$\lambda_{2}(C_3(T_1,T_2,T_3))\leq\lambda_{2}(B_{3})\doteq0.17985<1-\frac{\sqrt{6}}{3}$, where $B_{3}$ is shown in Figure \ref{Fig.29.}.
Thus $l_3+l_4+\cdots+l_t=0$. Therefore $C_3(T_1,T_2,T_3)$ has the form $B_{7}$ as shown in Figure \ref{Fig.29.}, here $l_i$ ($0\leq i\leq 2$) are nonnegative integers and $l_0+2l_1+3l_2+6=n\geq21$. Recall that $v_3$ is a vertex degree $l_0+l_1+l_2+2$ in $B_{7}$. Note that the eigenvalues of
$\mathcal{L}_{v_3}(B_{7})$ are $\underbrace{1-\frac{\sqrt{6}}{3},\cdots,1-\frac{\sqrt{6}}{3}}_{l_{2}+1}$, $\underbrace{1-\frac{\sqrt{2}}{2},\cdots,1-\frac{\sqrt{2}}{2}}_{l_{1}}$, $\frac{1}{2}$, $\underbrace{1,\cdots,1}_{l_{0}+l_{2}+1}$, $\frac{3}{2}$, $\underbrace{1+\frac{\sqrt{2}}{2},\cdots,1+\frac{\sqrt{2}}{2}}_{l_{1}}$, $\underbrace{1+\frac{\sqrt{6}}{3},\cdots,1+\frac{\sqrt{6}}{3}}_{l_{2}+1}$.
\\If $l_2\geq1$, then Lemma \ref{le:1} implies that $\lambda_{2}(B_{7})=1-\frac{\sqrt{6}}{3}$.
Analogously, when $l_2=0$, $\lambda_{2}(B_{7})$ $\geq1-\frac{\sqrt{6}}{3}$.
\\{\bf Subcase 1.3.} If $n_1=2$, $n_2=4$, then $n_3\geq15$ since $n\geq21$.
If $n_3\geq23$, by Lemmas \ref{le:5} and \ref{le:2}, we have $\lambda_{2}(C_3(T_1,T_2,T_3))\leq\lambda_{2}(S_3(2,4,23))\doteq0.18321<1-\frac{\sqrt{6}}{3}$. Therefore, in order to make
$\lambda_{2}(C_3(T_1,T_2,T_3)))\geq1-\frac{\sqrt{6}}{3}$, we have $15\leq n_3\leq22$.
Let $d$ be the length of the longest path from $v_3$ to the pendent vertex in $T_3$. In order to make
$\lambda_{2}(C_3(T_1,T_2,T_3))\geq1-\frac{\sqrt{6}}{3}$, then $d\leq2$. If $d\geq3$, then Lemma \ref{le:2} implies that
$\lambda_{2}(C_3(T_1,T_2,T_3))\leq\lambda_{2}(B_{1})\doteq0.16989<1-\frac{\sqrt{6}}{3}$, where $B_{1}$ is shown in Figure \ref{Fig.29.}.
Similarly, we have the length of the longest path from $v_2$ to the pendent vertex in $T_2$ is also at most 2. Since $\lambda_{2}(B_{8})\doteq0.17839<1-\frac{\sqrt{6}}{3}$ and $\lambda_{2}(B_{9})\doteq0.17595<1-\frac{\sqrt{6}}{3}$, where
$B_{8}$ and $B_{9}$ are shown in Figure \ref{Fig.29.}, thus by Lemmas \ref{le:3}
and \ref{le:2}, we have $C_3(T_1,T_2,T_3)$ is the unicyclic graph $B_{10}$ as shown in Figure \ref{Fig.29.}, where $l_i$ ($0\leq i\leq 1$) are nonnegative integers
and $21\leq l_0+2l_1+7=n\leq28$. If $l_1\geq4$, then Lemmas \ref{le:3} and \ref{le:2} imply that
$\lambda_{2}(C_3(T_1,T_2,T_3))\leq\lambda_{2}(B_{11})\doteq0.18325<1-\frac{\sqrt{6}}{3}$,
where $B_{11}$ is shown in Figure \ref{Fig.30.}. Thus $l_1\leq3$.
\\{\bf Subcase 1.3.1.} $l_1=3$, then $8\leq l_0\leq15$.
\\If $8\leq l_0\leq9$, then Lemma \ref{le:2} implies that $\lambda_{2}(C_3(T_1,T_2,T_3))\geq\lambda_{2}(B_{12})=0.18456>1-\frac{\sqrt{6}}{3}$, where $B_{12}$ is shown in Figure \ref{Fig.30.}. Otherwise, if $15\geq l_0\geq10$, then Lemma \ref{le:2} implies that $\lambda_{2}(C_3(T_1,T_2,T_3))\leq\lambda_{2}(B_{13})\doteq0.18324<1-\frac{\sqrt{6}}{3}$, where $B_{13}$ is shown in Figure \ref{Fig.30.}.
\\{\bf Subcase 1.3.2.}  $l_1=2$, then $10\leq l_0\leq17$.
\\If $10\leq l_0\leq13$, then Lemma \ref{le:2} implies that $\lambda_{2}(C_3(T_1,T_2,T_3))\geq\lambda_{2}(B_{14})=0.18460>1-\frac{\sqrt{6}}{3}$, where $B_{14}$ is shown in Figure \ref{Fig.30.}. Otherwise, if $14\leq l_0\leq17$, then Lemma \ref{le:2} implies that $\lambda_{2}(C_3(T_1,T_2,T_3))\leq\lambda_{2}(B_{15})\doteq0.18323<1-\frac{\sqrt{6}}{3}$, where $B_{15}$ is shown in Figure \ref{Fig.30.}.
\\{\bf Subcase 1.3.3.}  $l_1=1$, then $12\leq l_0\leq19$.
\\If $12\leq l_0\leq17$, then Lemma \ref{le:2} implies that $\lambda_{2}(C_3(T_1,T_2,T_3))\geq\lambda_{2}(B_{16})=0.18465>1-\frac{\sqrt{6}}{3}$, where $B_{16}$ is shown in Figure \ref{Fig.30.}. Otherwise, if $19\geq l_0\geq18$, then Lemma \ref{le:2} implies that $\lambda_{2}(C_3(T_1,T_2,T_3))\leq\lambda_{2}(B_{17})\doteq0.18322<1-\frac{\sqrt{6}}{3}$, where $B_{17}$ is shown in Figure \ref{Fig.30.}.
\\{\bf Subcase 1.3.4.}  $l_1=0$, then $14\leq l_0\leq21$.
\\Then Lemma \ref{le:2} implies that $\lambda_{2}(C_3(T_1,T_2,T_3))\geq\lambda_{2}(S_3(2,4,22))=0.18471>1-\frac{\sqrt{6}}{3}$.
\begin{figure}[htbp]
  \centering
  \includegraphics[scale=0.55]{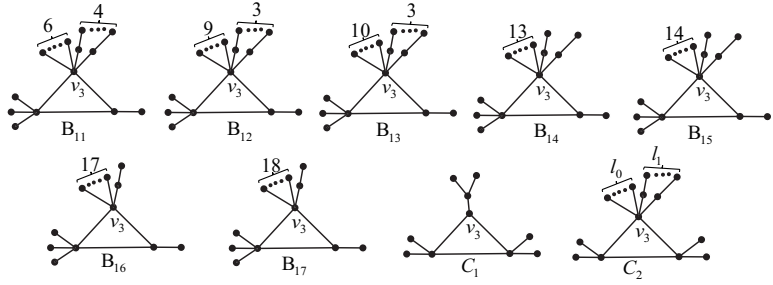}
  \caption{Unicyclic graphs $B_i$, $11\leq i\leq17$, $C_1$ and $C_2$ .}\label{Fig.30.}
\end{figure}
\\{\bf Subcase 1.4.} If $n_1=2$, $n_2=5$, then $n_3\geq14$ since $n\geq21$. Hence Lemmas \ref{le:5} and \ref{le:2} imply that $\lambda_{2}(C_3(T_1,T_2,T_3))\leq\lambda_{2}(S_3(2,5,14))\doteq0.17915<1-\frac{\sqrt{6}}{3}$.
\\{\bf Subcase 1.5.} If $n_1=2$, $n_3\geq n_2\geq 6$, then $n_3\geq10$ since $n\geq21$. Hence Lemmas \ref{le:5} and \ref{le:2} imply that $\lambda_{2}(C_3(T_1,T_2,T_3))\leq\lambda_{2}(S_3(2,6,10))\doteq0.18069<1-\frac{\sqrt{6}}{3}$.
\\{\bf Subcase 1.6.} If $n_1=n_2=3$, then $n_3\geq15$ since $n\geq21$.
If $n_3\geq32$, by Lemmas \ref{le:5} and \ref{le:2}, we have $\lambda_{2}(C_3(T_1,T_2,T_3))\leq\lambda_{2}(S_3(3,3,32))\doteq0.18326<1-\frac{\sqrt{6}}{3}$. Therefore, in order to make
$\lambda_{2}(C_3(T_1,T_2,T_3))\geq1-\frac{\sqrt{6}}{3}$, we have $15\leq n_3\leq31$.
Let $d$ be the length of the longest path from $v_3$ to the pendent vertex in $T_3$. In order to make
$\lambda_{2}(C_3(T_1,T_2,T_3))\geq1-\frac{\sqrt{6}}{3}$, then $d\leq2$. If $d\geq3$, then Lemma \ref{le:2} imply that
$\lambda_{2}(C_3(T_1,T_2,T_3))\leq\lambda_{2}(B_{1})\doteq0.16989<1-\frac{\sqrt{6}}{3}$, where $B_{1}$ is shown in Figure \ref{Fig.29.}.
 Since $\lambda_{2}(B_{5})\doteq0.18144<1-\frac{\sqrt{6}}{3}$ and $\lambda_{2}(C_{1})\doteq0.17464<1-\frac{\sqrt{6}}{3}$, where
$B_{5}$ and $C_{1}$ are shown in Figure \ref{Fig.29.} and Figure \ref{Fig.30.}, respectively. Thus by Lemmas \ref{le:3}
and \ref{le:2}, we have $C_3(T_1,T_2,T_3)$ is the unicyclic graph $C_{2}$ as shown in Figure \ref{Fig.30.}, where $l_i$ ($0\leq i\leq 1$) are nonnegative integers and $21\leq l_0+2l_1+7=n\leq37$. If $l_1\geq8$, then Lemmas \ref{le:3} and \ref{le:2} imply that
$\lambda_{2}(C_3(T_1,T_2,T_3))\leq\lambda_{2}(B_{18})\doteq0.18263<1-\frac{\sqrt{6}}{3}$,
where $B_{18}$ is shown in Figure \ref{Fig.31.}. Thus $l_1\leq7$.
\\{\bf Subcase 1.6.1.} $l_1=7$, then $0\leq l_0\leq16$.
\\If $0\leq l_0\leq2$, then Lemma \ref{le:2} implies that $\lambda_{2}(C_3(T_1,T_2,T_3))\geq\lambda_{2}(B_{19})=0.18400>1-\frac{\sqrt{6}}{3}$, where $B_{19}$ is shown in Figure \ref{Fig.31.}. Otherwise, if $16\geq l_0\geq3$, then Lemma \ref{le:2} implies that $\lambda_{2}(C_3(T_1,T_2,T_3))\leq\lambda_{2}(B_{20})\doteq0.18329<1-\frac{\sqrt{6}}{3}$, where $B_{20}$ is shown in Figure \ref{Fig.31.}.
\\{\bf Subcase 1.6.2.}  $l_1=6$, then $2\leq l_0\leq18$.
\\If $2\leq l_0\leq6$, then Lemma \ref{le:2} implies that $\lambda_{2}(C_3(T_1,T_2,T_3))\geq\lambda_{2}(B_{21})=0.18401>1-\frac{\sqrt{6}}{3}$, where $B_{21}$ is shown in Figure \ref{Fig.31.}. Otherwise, if $18\geq l_0\geq7$, then Lemma \ref{le:2} implies that $\lambda_{2}(C_3(T_1,T_2,T_3))\leq\lambda_{2}(B_{22})\doteq0.18329<1-\frac{\sqrt{6}}{3}$, where $B_{22}$ is shown in Figure \ref{Fig.31.}.
\\{\bf Subcase 1.6.3.}  $l_1=5$, then $4\leq l_0\leq20$.
\\If $4\leq l_0\leq10$, then Lemma \ref{le:2} implies that $\lambda_{2}(C_3(T_1,T_2,T_3))\geq\lambda_{2}(B_{23})=0.18403>1-\frac{\sqrt{6}}{3}$, where $B_{23}$ is shown in Figure \ref{Fig.31.}. Otherwise, if $20\geq l_0\geq11$, then Lemma \ref{le:2} implies that $\lambda_{2}(C_3(T_1,T_2,T_3))\leq\lambda_{2}(B_{24})\doteq0.18328<1-\frac{\sqrt{6}}{3}$, where $B_{24}$ is shown in Figure \ref{Fig.31.}.
\begin{figure}[htbp]
  \centering
  \includegraphics[scale=0.55]{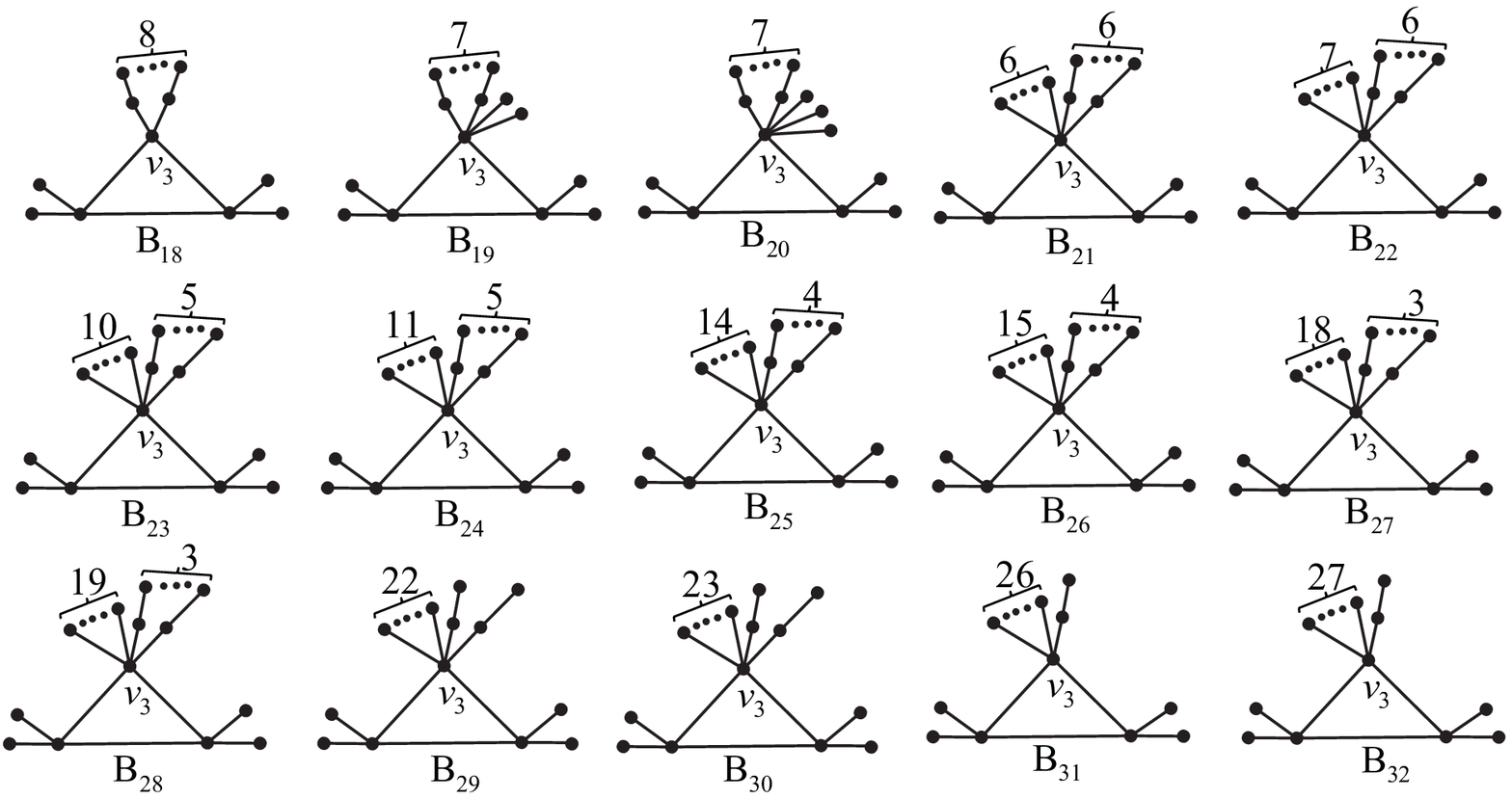}
  \caption{Unicyclic graphs $B_i$, $18\leq i\leq32$.}\label{Fig.31.}
\end{figure}
\\{\bf Subcase 1.6.4.}  $l_1=4$, then $6\leq l_0\leq22$.
\\If $6\leq l_0\leq14$, then Lemma \ref{le:2} implies that $\lambda_{2}(C_3(T_1,T_2,T_3))\geq\lambda_{2}(B_{25})=0.18404>1-\frac{\sqrt{6}}{3}$, where $B_{25}$ is shown in Figure \ref{Fig.31.}. Otherwise, if $22\geq l_0\geq15$, then Lemma \ref{le:2} implies that $\lambda_{2}(C_3(T_1,T_2,T_3))\leq\lambda_{2}(B_{26})\doteq0.18328<1-\frac{\sqrt{6}}{3}$, where $B_{26}$ is shown in Figure \ref{Fig.31.}.
\\{\bf Subcase 1.6.5.}  $l_1=3$, then $8\leq l_0\leq24$.
\\If $8\leq l_0\leq18$, then Lemma \ref{le:2} implies that $\lambda_{2}(C_3(T_1,T_2,T_3))\geq\lambda_{2}(B_{27})=0.18405>1-\frac{\sqrt{6}}{3}$, where $B_{27}$ is shown in Figure \ref{Fig.31.}. Otherwise, if $24\geq l_0\geq19$, then Lemma \ref{le:2} implies that $\lambda_{2}(C_3(T_1,T_2,T_3))\leq\lambda_{2}(B_{28})\doteq0.18327<1-\frac{\sqrt{6}}{3}$, where $B_{28}$ is shown in Figure \ref{Fig.31.}.
\\{\bf Subcase 1.6.6.}  $l_1=2$, then $10\leq l_0\leq26$.
\\If $11\leq l_0\leq22$, then Lemma \ref{le:2} implies that $\lambda_{2}(C_3(T_1,T_2,T_3))\geq\lambda_{2}(B_{29})=0.18406>1-\frac{\sqrt{6}}{3}$, where $B_{29}$ is shown in Figure \ref{Fig.31.}. Otherwise, if $26\geq l_0\geq23$, then Lemma \ref{le:2} implies that $\lambda_{2}(C_3(T_1,T_2,T_3))\leq\lambda_{2}(B_{30})\doteq0.18327<1-\frac{\sqrt{6}}{3}$, where $B_{30}$ is shown in Figure \ref{Fig.31.}.
\\{\bf Subcase 1.6.7.}  $l_1=1$, then $12\leq l_0\leq28$.
\\If $12\leq l_0\leq26$, then Lemma \ref{le:2} implies that $\lambda_{2}(C_3(T_1,T_2,T_3))\geq\lambda_{2}(B_{31})=0.18408>1-\frac{\sqrt{6}}{3}$, where $B_{31}$ is shown in Figure \ref{Fig.31.}. Otherwise, if $28\geq l_0\geq27$, then Lemma \ref{le:2} implies that $\lambda_{2}(C_3(T_1,T_2,T_3))\leq\lambda_{2}(B_{32})\doteq0.18326<1-\frac{\sqrt{6}}{3}$, where $B_{32}$ is shown in Figure \ref{Fig.31.}.
\\{\bf Subcase 1.6.8.}  $l_1=0$, then $14\leq l_0\leq30$.
\\Then Lemma \ref{le:2} implies that $\lambda_{2}(C_3(T_1,T_2,T_3))\geq\lambda_{2}(S_3(3,3,31))=0.18409>1-\frac{\sqrt{6}}{3}$.
\\{\bf Subcase 1.7.} If $n_1=3$, $n_2=4$, then $n_3\geq14$ since $n\geq21$. If $n_3\geq17$, by Lemmas \ref{le:5} and \ref{le:2}, we have $\lambda_{2}(C_3(T_1,T_2,T_3))\leq\lambda_{2}(S_3(3,4,17))\doteq0.18308<1-\frac{\sqrt{6}}{3}$. Therefore, in order to make
$\lambda_{2}(C_3(T_1,T_2,T_3))\geq1-\frac{\sqrt{6}}{3}$, we have $14\leq n_3\leq16$.
Let $d$ be the length of the longest path from $v_3$ to the pendent vertex in $T_3$. In order to make
$\lambda_{2}(C_3(T_1,T_2,T_3))\geq1-\frac{\sqrt{6}}{3}$, then $d\leq2$. If $d\geq3$, then Lemma \ref{le:2} implies that
$\lambda_{2}(C_3(T_1,T_2,T_3))\leq\lambda_{2}(B_{1})\doteq0.16989<1-\frac{\sqrt{6}}{3}$, where $B_{1}$ is shown in Figure \ref{Fig.29.}.
Similarly, we have the length of the longest path from $v_2$ to the pendent vertex in $T_2$ is also at most 2. Since $\lambda_{2}(B_{8})\doteq0.17839<1-\frac{\sqrt{6}}{3}$ and $\lambda_{2}(B_{9})\doteq0.17595<1-\frac{\sqrt{6}}{3}$, where
$B_{8}$ and $B_{9}$ are shown in Figure \ref{Fig.29.}. Thus by Lemmas \ref{le:3}
and \ref{le:2}, we have $C_3(T_1,T_2,T_3)$ is the unicyclic graph $B_{33}$ as shown in Figure \ref{Fig.32.}, where $l_i$ ($0\leq i\leq 1$) are nonnegative integers and $21\leq l_0+2l_1+8=n\leq23$. If $l_1\geq2$, then Lemmas \ref{le:3} and \ref{le:2} imply that
$\lambda_{2}(C_3(T_1,T_2,T_3))\leq\lambda_{2}(B_{34})\doteq0.18082<1-\frac{\sqrt{6}}{3}$,
where $B_{34}$ is shown in Figure \ref{Fig.32.}. Thus $l_1\leq1$.
\\{\bf Subcase 1.7.1.} $l_1=1$, then $11\leq l_0\leq13$.
\\If $l_0=11$, then Lemma \ref{le:2} implies that $\lambda_{2}(C_3(T_1,T_2,T_3))\geq\lambda_{2}(B_{35})=0.18582>1-\frac{\sqrt{6}}{3}$, where $B_{35}$ is shown in Figure \ref{Fig.32.}. Otherwise, if $13\geq l_0\geq12$, then Lemma \ref{le:2} implies that $\lambda_{2}(C_3(T_1,T_2,T_3))\leq\lambda_{2}(B_{36})\doteq0.18311<1-\frac{\sqrt{6}}{3}$, where $B_{36}$ is shown in Figure \ref{Fig.32.}.
\\{\bf Subcase 1.7.2.}  $l_1=0$, then $13\leq l_0\leq15$.
\\Then Lemma \ref{le:2} implies that $\lambda_{2}(C_3(T_1,T_2,T_3))\geq\lambda_{2}(S_3(3,4,16))=0.18604>1-\frac{\sqrt{6}}{3}$.
\begin{figure}[htbp]
  \centering
  \includegraphics[scale=0.55]{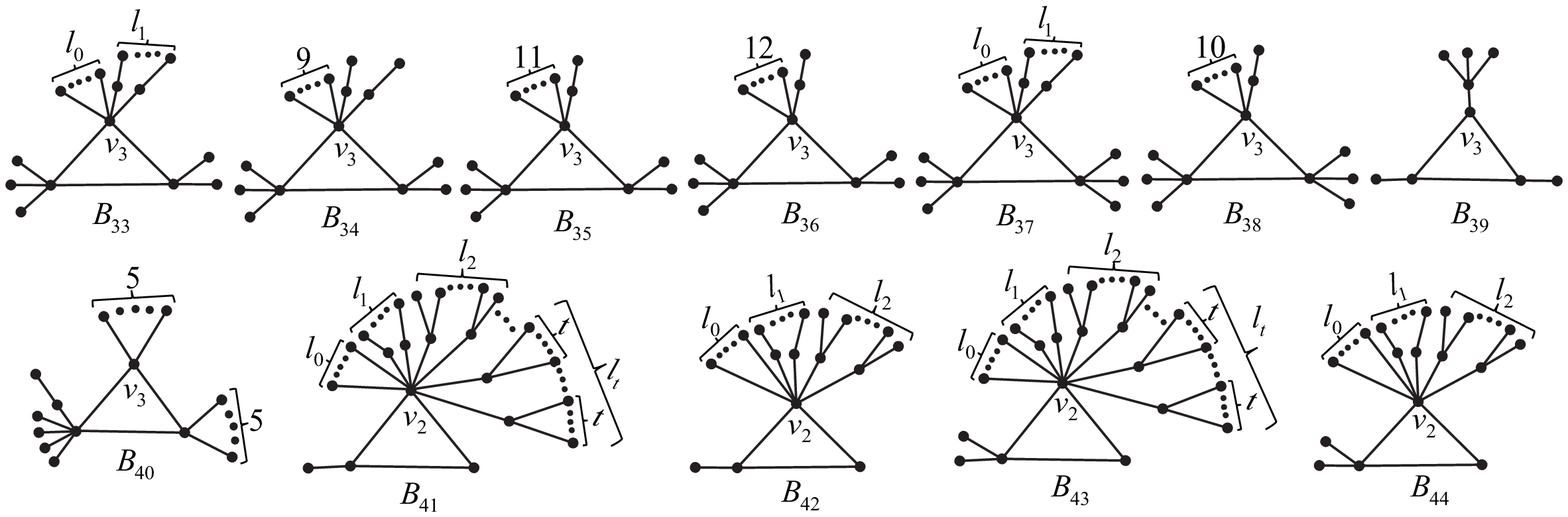}
  \caption{Unicyclic graphs $B_i$, $33\leq i\leq44$.}\label{Fig.32.}
\end{figure}
\\{\bf Subcase 1.8.} If $n_1=3$, $5\leq n_2\leq6$, then $n_3\geq12$ since $n\geq21$. Hence Lemmas \ref{le:5} and \ref{le:2} imply that $\lambda_{2}(C_3(T_1,T_2,T_3))\leq\lambda_{2}(S_3(3,5,12))\doteq0.18296<1-\frac{\sqrt{6}}{3}$.
\\{\bf Subcase 1.9.} If $n_1=3$, $7\leq n_2\leq8$, then $n_3\geq10$ since $n\geq21$. Hence Lemmas \ref{le:5} and \ref{le:2} imply that $\lambda_{2}(C_3(T_1,T_2,T_3))\leq\lambda_{2}(S_3(3,6,10))\doteq0.17916<1-\frac{\sqrt{6}}{3}$.
\\{\bf Subcase 1.10.} If $n_1=3$, $n_3\geq n_2\geq9$. Hence Lemmas \ref{le:5} and \ref{le:2} imply that $\lambda_{2}(C_3(T_1,T_2,$ $T_3))\leq\lambda_{2}(S_3(3,9,9))\doteq0.15418<1-\frac{\sqrt{6}}{3}$.
\\{\bf Subcase 1.11.} If $n_1=n_2=4$, then $n_3\geq13$ since $n\geq21$. If $n_3\geq14$, by Lemmas \ref{le:5} and \ref{le:2}, we have $\lambda_{2}(C_3(T_1,T_2,T_3))\leq\lambda_{2}(S_3(4,4,14))\doteq0.18029<1-\frac{\sqrt{6}}{3}$. Therefore, in order to make
$\lambda_{2}(C_3(T_1,T_2,T_3))\geq1-\frac{\sqrt{6}}{3}$, we have $n_3=13$. Similar as the Subcase 1.7, we have $C_3(T_1,T_2,T_3)$ is the unicyclic graph $B_{37}$ as shown in Figure \ref{Fig.32.}, where $l_i$ ($0\leq i\leq 1$) are nonnegative integers and $l_0+2l_1+9=21$. If $l_1\geq1$, then Lemmas \ref{le:3} and \ref{le:2} imply that
$\lambda_{2}(C_3(T_1,T_2,T_3))\leq\lambda_{2}(B_{38})\doteq0.17700<1-\frac{\sqrt{6}}{3}$,
where $B_{38}$ is shown in Figure \ref{Fig.32.}. Thus $l_1=0$, then $l_0=12$, $\lambda_{2}(C_3(T_1,T_2,T_3))=\lambda_{2}(S_3(4,4,13))\doteq0.18484>1-\frac{\sqrt{6}}{3}$.
\\{\bf Subcase 1.12.} If $n_1=4$, $5\leq n_2\leq6$, then $n_3\geq11$ since $n\geq21$. Hence Lemmas \ref{le:5} and \ref{le:2} imply that $\lambda_{2}(C_3(T_1,T_2,T_3))\leq\lambda_{2}(S_3(4,5,11))\doteq0.18282<1-\frac{\sqrt{6}}{3}$.
\\{\bf Subcase 1.13.} If $n_1=4$, $n_3\geq n_2\geq7$, then $n_3\geq9$ since $n\geq21$. Hence Lemmas \ref{le:5} and \ref{le:2} imply that $\lambda_{2}(C_3(T_1,T_2,T_3))\leq\lambda_{2}(S_3(4,7,9))\doteq0.17316<1-\frac{\sqrt{6}}{3}$.
\\{\bf Subcase 1.14.} If $n_1=5$, $5\leq n_2\leq6$, then $n_3\geq10$ since $n\geq21$. Hence Lemmas \ref{le:5} and \ref{le:2} imply that $\lambda_{2}(C_3(T_1,T_2,T_3))\leq\lambda_{2}(S_3(5,5,10))\doteq0.18267<1-\frac{\sqrt{6}}{3}$.
\\{\bf Subcase 1.15.} If $n_1=5$, $n_3\geq n_2\geq7$, then $n_3\geq8$ since $n\geq21$. Hence Lemmas \ref{le:5} and \ref{le:2} imply that $\lambda_{2}(C_3(T_1,T_2,T_3))\leq\lambda_{2}(S_3(5,7,8))\doteq0.18239<1-\frac{\sqrt{6}}{3}$.
\\{\bf Subcase 1.16.} If $n_1=n_2=6$, then $n_3\geq9$ since $n\geq21$. Hence Lemmas \ref{le:5} and \ref{le:2} imply that $\lambda_{2}(C_3(T_1,T_2,T_3))\leq\lambda_{2}(S_3(6,6,9))\doteq0.17781<1-\frac{\sqrt{6}}{3}$.
\\{\bf Subcase 1.17.} If $n_1=6$, $n_3\geq n_2\geq7$, then $n_3\geq8$ since $n\geq21$. Hence Lemmas \ref{le:5} and \ref{le:2} imply that $\lambda_{2}(C_3(T_1,T_2,T_3))\leq\lambda_{2}(S_3(6,7,8))\doteq0.18168<1-\frac{\sqrt{6}}{3}$.
\\{\bf Subcase 1.18.} If $n_3\geq n_2\geq n_1\geq7$. If $n_3\geq8$, by Lemmas \ref{le:5} and \ref{le:2}, we have $\lambda_{2}(C_3(T_1,T_2,T_3))$ $\leq\lambda_{2}(S_3(7,7,8))\doteq0.17940<1-\frac{\sqrt{6}}{3}$. Therefore, in order to make
$\lambda_{2}(C_3(T_1,T_2,T_3))\geq1-\frac{\sqrt{6}}{3}$, we have $n_3=7$. Since
$\lambda_{2}(B_{1})\doteq0.16989<1-\frac{\sqrt{6}}{3}$, $\lambda_{2}(C_{1})\doteq0.17464<1-\frac{\sqrt{6}}{3}$,
$\lambda_{2}(B_{39})\doteq0.16480<1-\frac{\sqrt{6}}{3}$ and $\lambda_{2}(B_{40})\doteq0.17911<1-\frac{\sqrt{6}}{3}$,
where $B_{1}$, $C_{1}$, $B_{39}$ and $B_{40}$ are shown in Figures \ref{Fig.29.}, \ref{Fig.30.} and \ref{Fig.32.}, respectively, thus by
Lemmas \ref{le:3} and \ref{le:2}, in order to make
$\lambda_{2}(C_3(T_1,T_2,T_3))\geq1-\frac{\sqrt{6}}{3}$,  we have $C_3(T_1,T_2,T_3)\cong S_3(7,7,7)$. Then
$\lambda_{2}(C_3(T_1,T_2,T_3))=\lambda_{2}(S_3(7,7,7)\doteq0.19422>1-\frac{\sqrt{6}}{3}$.
\\{\bf Case 2.} $|N|=2$. We assume that $n_1,n_2\in\{n_1,n_2,n_3\}$, and $2\leq n_1\leq n_2$.
\\{\bf Subcase 2.1.} If $n_1=2$, then $n_2\geq18$ since $n\geq21$.
Let $d$ be the length of the longest path from $v_2$ to the pendent vertex in $T_2$. In order to make
$\lambda_{2}(C_3(T_1,T_2,S_1))\geq1-\frac{\sqrt{6}}{3}$, then $d\leq2$. If $d\geq3$, then Lemma \ref{le:2} implies that
$\lambda_{2}(C_3(T_1,T_2,S_1))\leq\lambda_{2}(B_{1})\doteq0.16989<1-\frac{\sqrt{6}}{3}$, where $B_{1}$ is shown in Figure \ref{Fig.29.}.
Thus $C_3(T_1,T_2,T_3)$ is the unicyclic graph $B_{41}$ as shown in Figure \ref{Fig.32.}, where $l_i$ ($0\leq i\leq t$) are nonnegative integers
and $l_0+2l_1+3l_2+\cdots+(t+1)l_t+4=n\geq21$. If $l_3+l_4+\cdots+l_t\geq1$, then Lemma \ref{le:2} implies that
$\lambda_{2}(C_3(T_1,T_2,S_1))\leq\lambda_{2}(B_{3})\doteq0.17985<1-\frac{\sqrt{6}}{3}$, where $B_{3}$ is shown in Figure \ref{Fig.29.}.
Thus $l_3+l_4+\cdots+l_t=0$. Therefore $C_3(T_1,T_2,S_1)$ is the unicyclic graph $B_{42}$ as shown in Figure \ref{Fig.32.}, here $l_i$ ($0\leq i\leq 2$) are nonnegative integers and $l_0+2l_1+3l_2+4=n\geq21$. Recall that $v_2$ is a vertex degree $l_0+l_1+l_2+2$ in $B_{42}$. Note that the eigenvalues of
$\mathcal{L}_{v_2}(B_{42})$ are $\underbrace{1-\frac{\sqrt{6}}{3},\cdots,1-\frac{\sqrt{6}}{3}}_{l_{2}}$, $\underbrace{1-\frac{\sqrt{2}}{2},\cdots,1-\frac{\sqrt{2}}{2}}_{l_{1}+1}$, $\underbrace{1,\cdots,1}_{l_{0}+l_{2}+1}$, $\underbrace{1+\frac{\sqrt{2}}{2},\cdots,1+\frac{\sqrt{2}}{2}}_{l_{1}+1}$,  $\underbrace{1+\frac{\sqrt{6}}{3},\cdots,1+\frac{\sqrt{6}}{3}}_{l_{2}}$.
\\If $l_2\geq2$, then Lemma \ref{le:1} implies that $\lambda_{2}(B_{42})=1-\frac{\sqrt{6}}{3}$.
Analogously, when $l_2=1$, $\lambda_{2}(B_{42})\geq1-\frac{\sqrt{6}}{3}$, and when $l_2=0$, $\lambda_{2}(B_{42})>1-\frac{\sqrt{6}}{3}$.
\\{\bf Subcase 2.2.} If $n_1=3$, then $n_2\geq17$ since $n\geq21$.
Let $d$ be the length of the longest path from $v_2$ to the pendent vertex in $T_3$. In order to make
$\lambda_{2}(C_3(T_1,T_2,S_1))\geq1-\frac{\sqrt{6}}{3}$, then $d\leq2$. If $d\geq3$, then Lemma \ref{le:2} implies that
$\lambda_{2}(C_3(T_1,T_2,S_1))\leq\lambda_{2}(B_{1})\doteq0.16989<1-\frac{\sqrt{6}}{3}$, where $B_{1}$ is shown in Figure \ref{Fig.29.}.
 If $T_1$ is a path of length 2, then Lemmas \ref{le:3} and \ref{le:2} imply that
$\lambda_{2}(C_3(T_1,T_2,S_1))\leq\lambda_{2}(B_{5})\doteq0.18144<1-\frac{\sqrt{6}}{3}$, where $B_{5}$ is shown in Figure \ref{Fig.29.}.
Thus $C_3(T_1,T_2,S_1)$ is the unicyclic graph $B_{43}$ as shown in Figure \ref{Fig.32.}, where $l_i$ ($0\leq i\leq t$) are nonnegative integers
and $l_0+2l_1+3l_2+\cdots+(t+1)l_t+5=n\geq21$. If $l_3+l_4+\cdots+l_t\geq1$, then Lemma \ref{le:2} implies that
$\lambda_{2}(C_3(T_1,T_2,S_1))\leq\lambda_{2}(B_{3})\doteq0.17985<1-\frac{\sqrt{6}}{3}$, where $B_{3}$ is shown in Figure \ref{Fig.29.}.
Thus $l_3+l_4+\cdots+l_t=0$. Therefore $C_3(T_1,T_2,S_1)$ is the unicyclic graph $B_{44}$ as shown in Figure \ref{Fig.32.}, here $l_i$ ($0\leq i\leq 2$) are nonnegative integers and $l_0+2l_1+3l_2+5=n\geq21$. Recall that $v_2$ is a vertex degree $l_0+l_1+l_2+2$ in $B_{44}$. Note that the eigenvalues of
$\mathcal{L}_{v_2}(B_{44})$ are $\underbrace{1-\frac{\sqrt{6}}{3},\cdots,1-\frac{\sqrt{6}}{3}}_{l_{2}}$, 0.20943 $\underbrace{1-\frac{\sqrt{2}}{2},\cdots,1-\frac{\sqrt{2}}{2}}_{l_{1}}$, $\underbrace{1,\cdots,1}_{l_{0}+l_{2}+2}$, $\underbrace{1+\frac{\sqrt{2}}{2},\cdots,1+\frac{\sqrt{2}}{2}}_{l_{1}}$, 1.79057, $\underbrace{1+\frac{\sqrt{6}}{3},\cdots,1+\frac{\sqrt{6}}{3}}_{l_{2}}$.
\\If $l_2\geq2$, then Lemma \ref{le:1} implies that $\lambda_{2}(B_{44})=1-\frac{\sqrt{6}}{3}$.
Analogously, when $l_2=1$, $\lambda_{2}(B_{44})\geq1-\frac{\sqrt{6}}{3}$, and when $l_2=0$, $\lambda_{2}(B_{44})>1-\frac{\sqrt{6}}{3}$.
\\{\bf Subcase 2.3.} If $n_1=4$, then $n_2\geq16$ since $n\geq21$.
If $n_2\geq35$, by Lemmas \ref{le:5} and \ref{le:2}, we have $\lambda_{2}(C_3(T_1,T_2,S_1))\leq\lambda_{2}(S_3(4,35,1))\doteq0.18333<1-\frac{\sqrt{6}}{3}$. Therefore, in order to make
$\lambda_{2}(C_3(T_1,T_2,S_1))\geq1-\frac{\sqrt{6}}{3}$, we have $16\leq n_2\leq34$.
Let $d$ be the length of the longest path from $v_2$ to the pendent vertex in $T_2$. In order to make
$\lambda_{2}(C_3(T_1,T_2,S_1))\geq1-\frac{\sqrt{6}}{3}$, then $d\leq2$. If $d\geq3$, then Lemma \ref{le:2} implies that
$\lambda_{2}(C_3(T_1,T_2,S_1))\leq\lambda_{2}(B_{1})\doteq0.16989<1-\frac{\sqrt{6}}{3}$, where $B_{1}$ is shown in Figure \ref{Fig.29.}.
Similarly, we have the length of the longest path from $v_1$ to the pendent vertex in $T_1$ is also at most 2. Since $\lambda_{2}(B_{8})\doteq0.17839<1-\frac{\sqrt{6}}{3}$ and $\lambda_{2}(B_{9})\doteq0.17595<1-\frac{\sqrt{6}}{3}$, where
$B_{8}$ and $B_{9}$ are shown in Figure \ref{Fig.29.}, thus by Lemmas \ref{le:3}
and \ref{le:2}, we have $C_3(T_1,T_2,S_1)$ is the unicyclic graph $B_{45}$ as shown in Figure \ref{Fig.34.}, where $l_i$ ($0\leq i\leq 1$) are nonnegative integers
and $21\leq l_0+2l_1+6=n\leq39$. If $l_1\geq9$, then Lemmas \ref{le:3} and \ref{le:2} imply that
$\lambda_{2}(C_3(T_1,T_2,S_1))\leq\lambda_{2}(B_{46})\doteq0.18241<1-\frac{\sqrt{6}}{3}$,
where $B_{46}$ is shown in Figure \ref{Fig.34.}. Thus $l_1\leq8$.
\begin{figure}[htbp]
  \centering
  \includegraphics[scale=0.55]{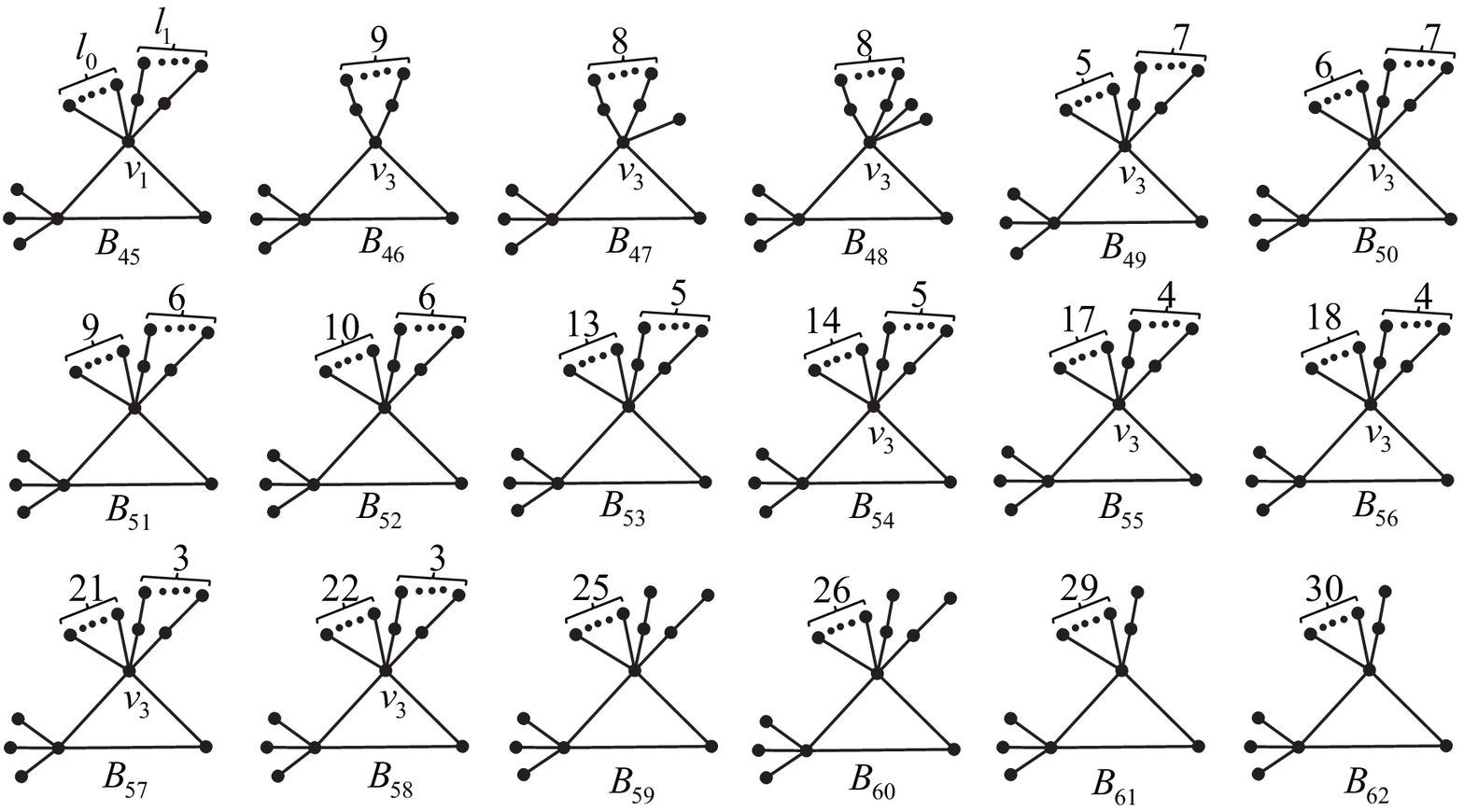}
  \caption{Unicyclic graphs $B_i$, $45\leq i\leq62$.}\label{Fig.34.}
\end{figure}
\\{\bf Subcase 2.3.1.} $l_1=8$, then $0\leq l_0\leq17$.
\\If $0\leq l_0\leq1$, then Lemma \ref{le:2} implies that $\lambda_{2}(C_3(T_1,T_2,S_1))\geq\lambda_{2}(B_{47})=0.18386>1-\frac{\sqrt{6}}{3}$, where $B_{47}$ is shown in Figure \ref{Fig.34.}. Otherwise, if $2\leq l_0\leq17$, then Lemma \ref{le:2} implies that $\lambda_{2}(C_3(T_1,T_2,S_1))\leq\lambda_{2}(B_{48})\doteq0.18335<1-\frac{\sqrt{6}}{3}$, where $B_{48}$ is shown in Figure \ref{Fig.34.}.
\\{\bf Subcase 2.3.2.} $l_1=7$, then $1\leq l_0\leq19$.
\\If $1\leq l_0\leq5$, then Lemma \ref{le:2} implies that $\lambda_{2}(C_3(T_1,T_2,S_1))\geq\lambda_{2}(B_{49})=0.18387>1-\frac{\sqrt{6}}{3}$, where $B_{49}$ is shown in Figure \ref{Fig.34.}. Otherwise, if $6\leq l_0\leq19$, then Lemma \ref{le:2} implies that $\lambda_{2}(C_3(T_1,T_2,S_1))\leq\lambda_{2}(B_{50})\doteq0.18335<1-\frac{\sqrt{6}}{3}$, where $B_{50}$ is shown in Figure \ref{Fig.34.}.
\\{\bf Subcase 2.3.3.} $l_1=6$, then $3\leq l_0\leq21$.
\\If $3\leq l_0\leq9$, then Lemma \ref{le:2} implies that $\lambda_{2}(C_3(T_1,T_2,S_1))\geq\lambda_{2}(B_{51})=0.18387>1-\frac{\sqrt{6}}{3}$, where $B_{51}$ is shown in Figure \ref{Fig.34.}. Otherwise, if $10\leq l_0\leq21$, then Lemma \ref{le:2} implies that $\lambda_{2}(C_3(T_1,T_2,S_1))\leq\lambda_{2}(B_{52})\doteq0.18335<1-\frac{\sqrt{6}}{3}$, where $B_{52}$ is shown in Figure \ref{Fig.34.}.
\\{\bf Subcase 2.3.4.} $l_1=5$, then $5\leq l_0\leq23$.
\\If $5\leq l_0\leq13$, then Lemma \ref{le:2} implies that $\lambda_{2}(C_3(T_1,T_2,S_1))\geq\lambda_{2}(B_{53})=0.18388>1-\frac{\sqrt{6}}{3}$, where $B_{53}$ is shown in Figure \ref{Fig.34.}. Otherwise, if $14\leq l_0\leq23$, then Lemma \ref{le:2} implies that $\lambda_{2}(C_3(T_1,T_2,S_1))\leq\lambda_{2}(B_{54})\doteq0.18334<1-\frac{\sqrt{6}}{3}$, where $B_{54}$ is shown in Figure \ref{Fig.34.}.
\\{\bf Subcase 2.3.5.} $l_1=4$, then $7\leq l_0\leq25$.
\\If $7\leq l_0\leq17$, then Lemma \ref{le:2} implies that $\lambda_{2}(C_3(T_1,T_2,S_1))\geq\lambda_{2}(B_{55})=0.18388>1-\frac{\sqrt{6}}{3}$, where $B_{55}$ is shown in Figure \ref{Fig.34.}. Otherwise, if $18\leq l_0\leq25$, then Lemma \ref{le:2} implies that $\lambda_{2}(C_3(T_1,T_2,S_1))\leq\lambda_{2}(B_{56})\doteq0.18334<1-\frac{\sqrt{6}}{3}$, where $B_{56}$ is shown in Figure \ref{Fig.34.}.
\\{\bf Subcase 2.3.6.} $l_1=3$, then $9\leq l_0\leq27$.
\\If $9\leq l_0\leq21$, then Lemma \ref{le:2} implies that $\lambda_{2}(C_3(T_1,T_2,S_1))\geq\lambda_{2}(B_{57})=0.18389>1-\frac{\sqrt{6}}{3}$, where $B_{57}$ is shown in Figure \ref{Fig.34.}. Otherwise, if $22\leq l_0\leq27$, then Lemma \ref{le:2} implies that $\lambda_{2}(C_3(T_1,T_2,S_1))\leq\lambda_{2}(B_{58})\doteq0.18334<1-\frac{\sqrt{6}}{3}$, where $B_{58}$ is shown in Figure \ref{Fig.34.}.
\\{\bf Subcase 2.3.7.} $l_1=2$, then $11\leq l_0\leq29$.
\\If $11\leq l_0\leq25$, then Lemma \ref{le:2} implies that $\lambda_{2}(C_3(T_1,T_2,S_1))\geq\lambda_{2}(B_{59})=0.18390>1-\frac{\sqrt{6}}{3}$, where $B_{59}$ is shown in Figure \ref{Fig.34.}. Otherwise, if $26\leq l_0\leq29$, then Lemma \ref{le:2} implies that $\lambda_{2}(C_3(T_1,T_2,S_1))\leq\lambda_{2}(B_{60})\doteq0.18334<1-\frac{\sqrt{6}}{3}$, where $B_{62}$ is shown in Figure \ref{Fig.34.}.
\\{\bf Subcase 2.3.8.} $l_1=1$, then $13\leq l_0\leq31$.
\\If $13\leq l_0\leq29$, then Lemma \ref{le:2} implies that $\lambda_{2}(C_3(T_1,T_2,S_1))\geq\lambda_{2}(B_{61})=0.18390>1-\frac{\sqrt{6}}{3}$, where $B_{61}$ is shown in Figure \ref{Fig.34.}. Otherwise, if $30\leq l_0\leq31$, then Lemma \ref{le:2} implies that $\lambda_{2}(C_3(T_1,T_2,S_1))\leq\lambda_{2}(B_{62})\doteq0.18333<1-\frac{\sqrt{6}}{3}$, where $B_{62}$ is shown in Figure \ref{Fig.34.}.
\\{\bf Subcase 2.3.9.}  $l_1=0$, then $15\leq l_0\leq33$.
\\Then Lemma \ref{le:2} implies that $\lambda_{2}(C_3(T_1,T_2,S_1))\geq\lambda_{2}(S_3(4,34,1))=0.18391>1-\frac{\sqrt{6}}{3}$.
\\{\bf Subcase 2.4.} If $5\leq n_1\leq6$, then $n_2\geq14$ since $n\geq21$. Hence Lemmas \ref{le:5} and \ref{le:2} imply that $\lambda_{2}(C_3(T_1,T_2,T_3))\leq\lambda_{2}(S_3(5,14,1))\doteq0.18316<1-\frac{\sqrt{6}}{3}$.
\\{\bf Subcase 2.5.} If $n_2\geq n_1\geq7$, then $n_2\geq10$ since $n\geq21$. Hence Lemmas \ref{le:5} and \ref{le:2} imply that $\lambda_{2}(C_3(T_1,T_2,T_3))\leq\lambda_{2}(S_3(7,10,1))\doteq0.16711<1-\frac{\sqrt{6}}{3}$.
\\{\bf Case 3.} $|N|=1$. Without loss of generality, we assume that $n_1\geq2$, then $n_1\geq19$ since $n\geq21$. That is $C_3(T_1,T_2,T_3)=C_3(T_1,S_1,S_1)$.
\begin{figure}[htbp]
  \centering
  \includegraphics[scale=0.55]{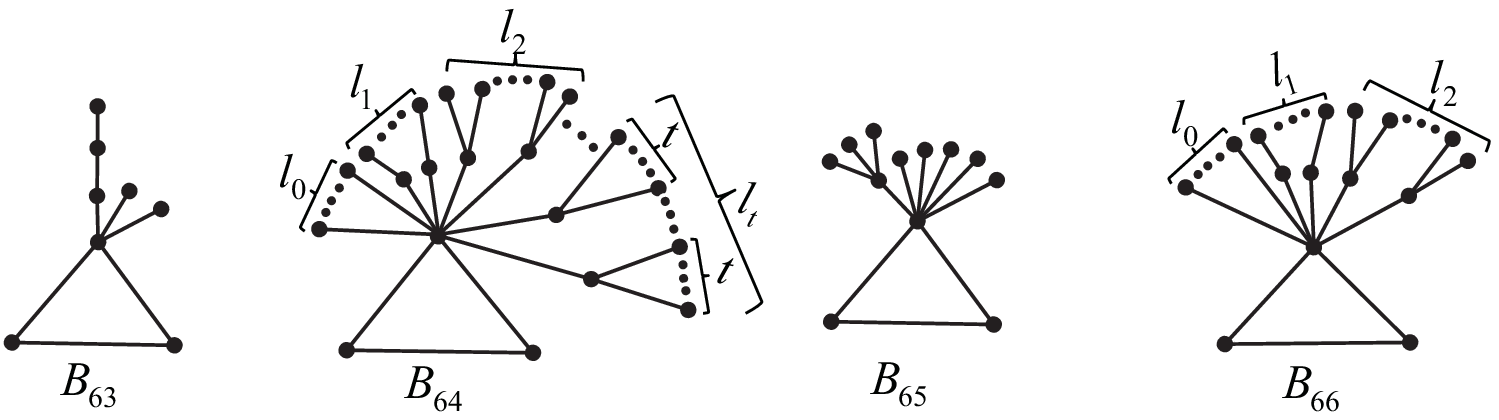}
  \caption{Unicyclic graphs $B_i$, $45\leq i\leq62$.}\label{Fig.35.}
\end{figure}let $d$ be the length of the longest
path from $v_1$ to the pendent vertex in $T_1$. In order to make
$\lambda_{2}(C_3(T_1,S_1,S_1))\geq1-\frac{\sqrt{6}}{3}$, then $d\leq2$. If $d\geq3$, then  Lemmas \ref{le:3} and \ref{le:2} imply that
$\lambda_{2}(C_3(T_1,S_1,S_1))\leq\lambda_{2}(B_{63})\doteq0.17815<1-\frac{\sqrt{6}}{3}$, where $B_{63}$ is shown in Figure \ref{Fig.35.}.
Thus $C_3(T_1,S_1,S_1)$ is the unicyclic graph $B_{64}$ as shown in Figure \ref{Fig.35.}, where $l_i$ ($0\leq i\leq t$) are nonnegative integers
and $l_0+2l_1+3l_2+\cdots+(t+1)l_t+3=n\geq21$. If $l_3+l_4+\cdots+l_t\geq1$, then  Lemmas \ref{le:3} and \ref{le:2} imply that
$\lambda_{2}(C_3(T_1,S_1,S_1))\leq\lambda_{2}(B_{65})\doteq0.17975<1-\frac{\sqrt{6}}{3}$, where $B_{65}$ is shown in Figure \ref{Fig.35.}. Thus $l_3+l_4+\cdots+l_t=0$. Therefore $C_3(T_1,S_1,S_1)$ is the unicyclic graph $B_{66}$ as shown in Figure \ref{Fig.35.}, here $l_i$ ($0\leq i\leq 2$) are nonnegative integers and $l_0+2l_1+3l_2+3=n\geq21$. Recall that $v_1$ is a vertex degree $l_0+l_1+l_2+2$ in $B_{66}$. Note that the eigenvalues of
$\mathcal{L}_{v_1}(B_{66})$ are $\underbrace{1-\frac{\sqrt{6}}{3},\cdots,1-\frac{\sqrt{6}}{3}}_{l_{2}}$, $\underbrace{1-\frac{\sqrt{2}}{2},\cdots,1-\frac{\sqrt{2}}{2}}_{l_{1}}$, 0.5 $\underbrace{1,\cdots,1}_{l_{0}+l_{2}}$, 1.5, $\underbrace{1+\frac{\sqrt{2}}{2},\cdots,1+\frac{\sqrt{2}}{2}}_{l_{1}}$, $\underbrace{1+\frac{\sqrt{6}}{3},\cdots,1+\frac{\sqrt{6}}{3}}_{l_{2}}$.
\\If $l_2\geq2$, then Lemma \ref{le:1} implies that $\lambda_{2}(B_{66})=1-\frac{\sqrt{6}}{3}$.
Analogously, when $l_2=1$, $\lambda_{2}(B_{66})\geq1-\frac{\sqrt{6}}{3}$, and
when $l_2=0$, $\lambda_{2}(B_{66})>1-\frac{\sqrt{6}}{3}$.
\end{proof}



\end{document}